\documentclass[11pt,a4paper]{amsart}
\usepackage{amssymb,amsfonts,amsmath,mathrsfs}
\usepackage[left=2cm,right=2cm,top=2cm,height=250mm]{geometry}
\usepackage{color}
\usepackage{verbatim}
\newtheorem{proposition}{Proposition}[section]
\newtheorem{theorem}[proposition]{Theorem}
\newtheorem{corollary}[proposition]{Corollary}
\newtheorem{lemma}[proposition]{Lemma}

\theoremstyle{definition}
\newtheorem{definition}[proposition]{Definition}

\newtheorem{remark}[proposition]{Remark}

\numberwithin{equation}{section}

\def\R{\Bbb R}
\def\Dx{\Delta_x}
\def\Nx{\nabla_x}
\def\Dt{\partial_t}
\def\({\left(}
\def\){\right)}
\def\eb{\varepsilon}
\def\Cal{\mathcal}
\def\Bbb{\mathbb}
\def\divv{\operatorname{div}}
\begin{document}
\title[Navier-Stokes equations in a strip]{Infinite-energy solutions for the Navier-Stokes equations in a strip revisited}
\author[] {Peter Anthony${}^1$, and Sergey Zelik${}^1$}

\begin{abstract} The paper deals with the Navier-Stokes equations in a strip in the class of spatially non-decaing (infinite-energy) solutions belonging to the properly chosen uniformly local Sobolev spaces. The global well-posedness and dissipativity  of the Navier-Stokes equations in a strip in such spaces has been first established in \cite{ZelikGlasgow}. However, the proof given there contains rather essential error and the aim of the present paper is to correct this error and to show that the main results of \cite{ZelikGlasgow} remain true.
\end{abstract}

\subjclass[2000]{35B40, 35B45}
\keywords{Navier-Stokes equations, unbounded domains, infinite-energy solutions}
\thanks{
This work is partially supported by the Russian Ministry of Education and Science (contract no.
8502).}

\address{${}^1$ University of Surrey, Department of Mathematics, \newline
Guildford, GU2 7XH, United Kingdom.}

\email{p.antony@surrey.ac.uk}
\email{s.zelik@surrey.ac.uk}
\maketitle
\tableofcontents
\section{Introduction}\label{s0}
We study the infinite energy solutions of the Navier-Stokes equations
\begin{equation}\label{0.eqmain}
\begin{cases}
\Dt u+(u,\Nx)u+\Nx p=\Dx u+g,\\
\divv u=0,\ \ u\big|_{\partial\Omega}=0,\ \ u\big|_{t=0}=u_0
\end{cases}
\end{equation}
in a strip $\Omega=\R\times(-1,1)$. Note that the case where the solution $u=(u_1,u_2)$ has the finite energy is well understood now-a-days, see \cite{Ab1,babin,babin1,temam,temam1} and also references therein. In that case the basic energy estimate can be obtained by multiplication of \eqref{0.eqmain} by $u$, integrating over $\Omega$ and using the fact that
\begin{equation}\label{0.0}
\int_{\Omega}(u(x),\Nx)u(x).u(x)\,dx=0
\end{equation}
for any (square integrable) divergence free function $u$ satisfying the Dirichlet boundary conditions. However, the most interesting from the physical point of view solutions of problem \eqref{0.eqmain} naturally have {\it infinite} energy, for instance, it will be so for the classical Poiseille flow
$$
u(x)=\(\begin{matrix} \alpha(x_2^2-1)\\0\end{matrix}\),\  \alpha\in\R
$$
as well as for all other solutions bifurcating from it, therefore, exactly the infinite energy solutions look as a relevant class of solutions here from the physical point of view.
\par
The theory of dissipative dynamical systems in unbounded domains and associated infinite energy solutions are intensively developing during the last 20 years starting from the pioneering papers \cite{BabinVishik,Ab1,Ab2}, see also \cite{MielkeS,EfZelCPAM,ZelikCPAM,MirZel} and references therein. In this theory, the so-called {\it uniformly-local} Sobolev spaces defined via
$$
W^{l,p}_b(\Omega):=\{u\in \Cal D'(\Omega),\ \|u\|_{W^{l,p}_b}:=\sup_{s\in\R}\|u\|_{W^{l,p}(\Omega_s)}<\infty\},\ \Omega_s:=(s,s+1)\times(-1,1)
$$
are used as the phase spaces for the problems considered. Indeed, on the one hand, in contrast to the usual Sobolev spaces, these spaces contain constants, space-periodic solutions, etc. and look more suitable for the case of unbounded domains. On the other hand, in these spaces one has the regularity theory for the elliptic/parabolic equations which is very similar to the one developed for the usual Sobolev spaces, see e.g., \cite{MirZel} and also Section \ref{s1} below. Note also that, in order to obtain the proper estimates for the solutions in the uniformly local spaces, one can use the so-called {\it weighted} energy estimates as an intermediate step and utilize the relation
$$
\|u\|_{L^2_b}\sim \sup_{s\in\R}\|u\|_{L^2_{\phi(\cdot-s)}},
$$
where $\phi$ is a properly chosen (square integrable) weight function, see Section \ref{s1} for more details.
\par
According to this strategy, in order to obtain the estimates for the solutions of \eqref{0.eqmain} in the uniformly local spaces (say, in $[L^2_b(\Omega)]^2$), it would be natural to try to multiply equation \eqref{0.eqmain} by $\phi^2u$, where $\phi=\phi(x_1)$ is a properly chosen weight function. However, there are two principal difficulties arising here. First, we do not have the analogue of \eqref{0.0} for the weighted case
$$
\int_\Omega (u(x),\Nx)u(x).\phi^2(x_1)u(x)\,dx=-2\int_{\Omega}\phi(x_1)\phi'(x_1) u_1(x)|u(x)|^2\,dx\ne0,
$$
so the non-linear term does not vanish, but produces the extra cubic term which should be somehow estimated (this is far from being straightforward since the  "good" terms in the weighted energy inequality are only quadratic). Second, the function $\phi^2u$ is no more divergent free
$$
\divv u=2\phi\phi'u_1\ne0,
$$
so the term containing pressure also survives in the weighted energy estimate and requires to be controlled. The situation is simpler in the case $\Omega=\R^2$ or $\Omega=\R\times(-1,1)$ with the periodic boundary conditions when the maximum principle can be applied to the vorticity equation which gives an important extra estimate, see \cite{Giga1,Giga2,MielkeA,ZelJMFM} and the references therein for more details (see also \cite{L02} for some estimates in uniformly local spaces in the 3D case $\Omega=\R^3$).
\par
An effective way to overcome both of the aforementioned problems has been suggested in \cite{ZelikGlasgow} (see also \cite{ZelikInst}) where the the weighted energy theory for the Navier-Stokes equations in cylindrical domains has been developed. The problem with the extra cubic term has been solved there by using the special weights
$$
\theta_{\eb,s}(x):=\frac1{\sqrt{1+\eb^2|x-s|^2}}, \ \ s\in\R,
$$
depending on a small parameter $\eb>0$. Then, since these weights satisfy
$$
|\theta_{\eb,s}'(x)|\le C\eb[\theta_{\eb,s}(x)]^2,
$$
the nonlinearity produces only the small extra term of the form $\eb\|u\|^3_{L^3_\theta}$ which can be then controlled by the proper choice of the parameter $\eb$ depending on the initial condition $u_0$, see also Section \ref{s3}.
\par
To solve the second problem, it was suggested to multiply equation \eqref{0.eqmain} by $\theta^2u-v_\theta$, where the corrector $v_\theta$ solves the following linear adjoint problem:
\begin{equation}\label{0.corr}
-\Dt v_\theta+\Nx q=\Dx v_\theta,\ \ v_\theta\big|_{\partial\Omega}=0,\ \ \divv v_\theta=2\theta_{\eb,s}\theta'_{\eb,s} u_1.
\end{equation}
Then, since $\divv(\theta^2u-v_\theta)=0$, the pressure term vanishes and one has the following weighted energy equality:
\begin{equation}\label{0.energy}
\frac d{dt}\(\frac12\|u(t)\|^2_{L^2_\theta}-(u(t),v_\psi(t))\)+(\Nx u(t),\Nx(\theta^2u(t))=(g-(u(t),\Nx)u(t),\theta^2u(t)-v_\theta(t)).
\end{equation}
Here and below $(u,v)$ stands for the standard inner product in $L^2(\Omega)$.
Moreover, since the data for $v_\theta$ contains the multiplier $\theta'\sim\eb\theta^2$, the corrector $v_\theta$ should be small (at least, of order~$\eb$) and, by this reason should not destroy the energy estimate. The realization of this strategy in \cite{ZelikGlasgow} gave the following result.
\begin{theorem}\label{Th0.main} For any external force $g\in[L^2_b(\Omega)]^2$ and any divergent free  $u_0\in [L^2_b(\Omega)]^2$, $\partial_{n}u_0\big|_{\partial\Omega}=0$, there exists a unique solution $u(t)\in [L^2_b(\Omega)]^2$ of the Navier-Stokes problem \eqref{0.eqmain} satisfying the mean flux condition:
$$
\int_{-1}^1u_1(t,x_1,x_2)\,dx_2=\int_{-1}^1u_1(0,x_1,x_2)\,dx_2=c\in\R
$$
and the following dissipative estimate holds:
\begin{equation}\label{0.est}
\|u(t)\|_{L^2_b}\le Q(\|u_0\|_{L^2_b})e^{-\alpha t}+C(1+c^3+\|g\|_{L^2_b}^2),
\end{equation}
where the positive constants $\alpha$ and $C$ and the monotone function $Q$ are independent of $u_0$, $g$ and $t$.
\end{theorem}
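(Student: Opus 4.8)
The plan is to recover Theorem~\ref{Th0.main} by the weighted energy method sketched in the Introduction, carried out in four stages: (i) solvability and quantitative estimates for the corrector problem \eqref{0.corr}; (ii) the weighted energy inequality \eqref{0.energy} and its closing; (iii) existence of a solution by approximation; (iv) uniqueness. Throughout we fix $\eb>0$ (to be chosen small at the end, depending on $\|u_0\|_{L^2_b}$ and on the flux $c$), fix $s\in\R$, abbreviate $\theta=\theta_{\eb,s}$ — which depends only on $x_1$ — and use without further comment the facts recalled in Section~\ref{s1}: the norm equivalence $\|u\|_{L^2_b}\sim_{\eb}\sup_{s\in\R}\|u\|_{L^2_{\theta_{\eb,s}}}$, the weight bounds $|\theta'|\le C\eb\,\theta^2$ and $|\theta''|\le C\eb^2\theta^3$, the weighted Stokes/elliptic regularity, the one-dimensional Poincar\'e inequality on the cross-section $(-1,1)$ (for functions of prescribed mean), and the two-dimensional Ladyzhenskaya-type inequality $\|w\|^3_{L^3(\Omega)}\le C\|w\|_{L^2(\Omega)}\|\Nx w\|^2_{L^2(\Omega)}$ for $w$ vanishing on $\d\Omega$.

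I would begin with the corrector, which I expect to be the crux. For fixed $t$ the equation $\divv v_\theta=2\theta\theta'u_1$ with $v_\theta|_{\d\Omega}=0$ is solved by a Bogovskii-type operator adapted to the weight $\theta$; since its right-hand side carries the small factor $\theta'\sim\eb\theta^2$ and (because $\theta$ is $x_2$-independent and $u_1|_{\d\Omega}=0$) has the required cross-sectional compatibility, one gets, \emph{uniformly in} $s$,
\begin{equation*}
\|v_\theta(t)\|_{H^1_\theta}\le C\eb\,\|u(t)\|_{L^2_\theta},\qquad \|\Dt v_\theta(t)\|_{(H^1_\theta)'}\le C\eb\,\|\Dt u(t)\|_{(H^1_\theta)'}.
\end{equation*}
The backward-parabolic structure $-\Dt v_\theta+\Nx q=\Dx v_\theta$ in \eqref{0.corr} is chosen precisely so that the time-boundary terms cancel when it is paired against the forward equation for $u$; the delicate point — and, I believe, where the argument of \cite{ZelikGlasgow} needs repair — is to make this rigorous, namely to give \eqref{0.corr} a meaning under which $t\mapsto(u(t),v_\theta(t))$ is absolutely continuous with $\frac{d}{dt}(u,v_\theta)=(\Dt u,v_\theta)+(u,\Dt v_\theta)$, and under which all the corrector bounds above are genuinely of order $\eb$ and uniform in $s$, so that the corrector spoils neither the coercivity of the modified energy nor the dissipation.

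Granting this, I would test \eqref{0.eqmain} against $\theta^2u-v_\theta$ — which is divergence free, so the pressure drops out — to get \eqref{0.energy}, and estimate each term. After moving derivatives off $\theta^2$, $(\Nx u,\Nx(\theta^2u))\ge\tfrac12\|\theta\Nx u\|^2-C\eb^2\|u\|^2_{L^2_\theta}$. By Cauchy--Schwarz, the weighted Poincar\'e inequality in $x_2$ and the corrector bound, the forcing term is $\le\tfrac18\|\theta\Nx u\|^2+C\|g\|^2_{L^2_b}$. For the nonlinearity, the part tested against $\theta^2u$ equals $-2\int_\Omega\theta\theta'u_1|u|^2\,dx$; writing $u_1=\tfrac c2+\tilde u_1$ with $\tilde u$ of zero cross-sectional mean (note $\bar u\equiv(\tfrac c2,0)$ by the divergence and boundary conditions), using $\int_\Omega\theta\theta'=0$, the bound $|\theta'/\theta|\le C\eb$ and the Ladyzhenskaya inequality applied to $w=\theta u$, this term is $\le\tfrac18\|\theta\Nx u\|^2+C\eb\,\|u\|_{L^2_\theta}\|\theta\Nx u\|^2+C\eb\big(\|u\|^3_{L^2_\theta}+|c|^3\big)$; the part tested against $v_\theta$ is, after integrating by parts and using $\|v_\theta\|_{H^1_\theta}\le C\eb\|u\|_{L^2_\theta}$ together with the same interpolation, of the same form. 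Since $|(u,v_\theta)|\le C\eb\|u\|^2_{L^2_\theta}$, the modified energy $E_s(t):=\tfrac12\|u(t)\|^2_{L^2_\theta}-(u(t),v_\theta(t))$ is, for $\eb$ small, equivalent to $\|u(t)\|^2_{L^2_\theta}$. Collecting everything and using a weighted Poincar\'e inequality to regain the dissipation, one obtains, as long as $\|u(t)\|_{L^2_\theta}$ stays below a threshold,
\begin{equation*}
\frac{d}{dt}E_s(t)+\alpha E_s(t)\le C\eb\,\|u(t)\|_{L^2_\theta}\|\theta\Nx u(t)\|^2+C\big(1+|c|^3+\|g\|^2_{L^2_b}\big),
\end{equation*}
with $\alpha,C$ independent of $s$; a continuity/bootstrap argument, choosing $\eb=\eb(\|u_0\|_{L^2_b},c)$ small enough to absorb the first term on the right into the good term, then produces a uniform bound $\|u(t)\|_{L^2_\theta}\le Q(\|u_0\|_{L^2_b})$ and the clean inequality $\frac{d}{dt}E_s+\alpha E_s\le C(1+|c|^3+\|g\|^2_{L^2_b})$; Gronwall and the supremum over $s$ give a dissipative estimate, and a second pass with a \emph{universal} $\eb$ (legitimate once $\|u(t)\|_{L^2_b}$ is eventually below a universal bound) removes the $\|u_0\|$-dependence from the additive constant, yielding \eqref{0.est}.

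Finally, existence of $u\in[L^2_b(\Omega)]^2$ follows by solving \eqref{0.eqmain} on the expanding bounded cylinders $\Omega^{(N)}=(-N,N)\times(-1,1)$ (or by a Galerkin scheme), noting that the a priori bounds above are uniform in $N$, and passing to the limit with the help of interior parabolic smoothing to obtain the compactness needed on bounded sub-domains; the mean flux $c$ is preserved along the approximation and in the limit, and the initial datum is attained in the appropriate sense. For uniqueness, given two solutions $u^1,u^2$ with the same data, write the equation for the difference $w=u^1-u^2$ — which has zero flux — test it against $\theta^2w-v^w_\theta$, where $v^w_\theta$ is the corrector associated to $w$, and run the same Gronwall argument; the nonlinear contributions $((w,\Nx)u^2,\theta^2w-v^w_\theta)$ and $((u^1,\Nx)w,\theta^2w-v^w_\theta)$ are controlled by the Ladyzhenskaya inequality using $u^1,u^2\in[L^2_b(\Omega)]^2$, which forces $w\equiv0$.
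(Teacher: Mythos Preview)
Your proposal reproduces precisely the step that this paper is written to \emph{correct}. You claim, via a ``Bogovskii-type operator adapted to the weight'', that the corrector obeys
\[
\|v_\theta(t)\|_{H^1_\theta}\le C\eb\,\|u(t)\|_{L^2_\theta},
\]
and then use this $H^1$ control repeatedly (to bound $((u,\Nx)u,v_\theta)$ after integrating by parts, and in the uniqueness argument). This is exactly estimate \eqref{0.wrong} from \cite{ZelikGlasgow}, which the Introduction explains is unproven and probably false: the difficulty is that the Leray projector does not preserve Dirichlet boundary conditions, so the energy-type argument that would give $C(0,T;W^{1,2}_{\phi})$ for the corrector breaks down. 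The paper replaces it by the strictly weaker bound \eqref{0.weak}, namely $v_\theta\in C(0,T;L^2_{\theta^{-2}})\cap C(0,T;L^3_{\theta^{-2}})$, obtained in Theorem~\ref{Th2.au} by an analytic-semigroup argument using \eqref{1.kappaPi} (which only works for exponents below $1/4$, hence $W^{1/3,2}\hookrightarrow L^3$). Every downstream estimate---\eqref{3.inest2}, \eqref{3.lcgood1}, \eqref{3.dvc1}, \eqref{4.non-main1}, \eqref{4.non-main2}---is then reworked so that the corrector appears only in $L^3_{\theta^{-2}}$, paired via H\"older with $u\in L^6_\theta$ and $\Nx u\in L^2_\theta$. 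Your closure of the energy inequality and your uniqueness argument both rely on the unavailable $H^1$ bound and would not go through with only the $L^3$ control.

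There is a second, related confusion: a pointwise-in-$t$ Bogovskii lift solves only $\divv v_\theta=2\theta\theta'u_1$, not the adjoint parabolic problem \eqref{0.corr}. The whole point of \eqref{0.corr} is the cancellation in \eqref{2.formal}: $(\Dt u-\Dx u,\,v_\theta)$ becomes $(u,-\Dt v_\theta-\Dx v_\theta)=(u,\Nx q)=0$. With a Bogovskii corrector that does not satisfy the adjoint equation, the term $(\Dt u,v_\theta)$ survives and, since you only control $\Dt u$ in $\Cal V^*_\phi$ (not in $[H^{-1}_\phi]^2$, cf.\ Remark~\ref{Rem1.dif}), you cannot pair it against $v_\theta$ without the very $H^1$ regularity you are trying to avoid assuming. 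The paper also handles the non-zero flux by subtracting a carefully designed profile $V_c$ (Lemma~\ref{Lem3.flux}) rather than the bare mean $(c/2,0)$, proves uniqueness via compactly supported cut-offs $\psi_s$ (Theorem~\ref{Th4.unique}) to avoid needing gradient control of the corrector, and obtains dissipativity by an $\eb$-doubling iteration (Theorem~\ref{Th5.dis}); your ``second pass with a universal $\eb$'' is the right idea but needs that mechanism to be made precise.
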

Clearly, the  estimates for the auxiliary problem \eqref{0.corr} play an important role in the proof of this theorem. Namely, as stated in Theorem 5.1 of \cite{ZelikGlasgow}, the corrector $v_\theta$ satisfies the following estimate:
\begin{equation}\label{0.wrong}
\|v_\theta\|_{C(0,T;W^{1,2}_{\theta^{-2}})}+\|v_\theta\|_{L^2(0,T;W^{2,2}_{\theta^{-2}})}\le C\eb(\|u\|_{C(0,T;L^2_\theta)}+\|u\|_{L^2(0,T;W^{1,2}_\theta)}).
\end{equation}
Unfortunately, the proof of this key estimate contains an essential error. Namely, the functon $\bar w+\Pi v$ involved into equation (5.14) at page 553 of \cite{ZelikGlasgow} has {\it non-zero} boundary conditions (although $v\big|_{\partial\Omega}=0$, $\Pi v\big|_{\partial\Omega}\ne0$ since the Leray projector does not preserve Dirichlet boundary conditions) and, by this reason, the multiplication of this equation by $\divv(\varphi_{2\mu,x_0}\Nx(\bar w+\Pi v))$ leads to the extra uncontrollable boundary term which is missed in equality (5.15). Thus, the proof of Theorem 5.1 given in \cite{ZelikGlasgow} is formally wrong. Moreover, estimate \eqref{0.wrong} is probably  wrong as well (at least, we do not know how to prove good estimates for the auxiliary problem \eqref{0.corr} in the weighted  space $C(0,T;W^{1,2}_{\phi}(\Omega))$ which are stated in this theorem and only the weaker versions of these estimates, e.g., in the space $C(0,T; L^3_{\phi}(\Omega))$ are available, see Section \ref{s2} below) .
\par
The aim of the present paper is to correct the aforementioned error and to show that  Theorem \ref{Th0.main} stated above remains true. To this end,
we first develop an alternative approach to study the auxiliary equation \eqref{0.corr} based more on the methods of the analytic semigroup theory rather than on energy estimates and verify a weaker version of Theorem 5.1 from \cite{ZelikGlasgow}. In a fact, we are unable to establish the control of the corrector $v_\theta$ in $C(0,T;W^{1,2}_{\theta^{-2}})$ or in $L^2(0,T;W^{2,2}_{\theta^{-2}})$ (as stated in this theorem), but the following weaker version of these estimates hold:
\begin{equation}\label{0.weak}
\|v_\theta\|_{C(0,T;L^2_{\theta^{-2}})}+\|v_\theta\|_{C(0,T;L^3_{\theta^{-2}})}\le C\eb\|u\|_{C(0,T;L^2_\theta)},
\end{equation}
see Section \ref{s2} for the details. Then, keeping in mind that estimate \eqref{0.weak} is essentially weaker than the original estimate \eqref{0.wrong} used in \cite{ZelikGlasgow}, we have to rework most part of proofs  given in \cite{ZelikGlasgow} in order to show that this loss of the regularity of the corrector $v_\theta$ is not crucial and that the main results remain true despite the aforementioned error.
\par
The paper is organized as follows. The definitions of the proper weight functions and associated weighted spaces as well as their basic properties are given in Section \ref{s1}. Moreover, we briefly recall here the known facts on the regularity of the Leray projector and the Stokes operator in these spaces. The auxiliary linear problem  \eqref{0.corr} is studied in Section \ref{s2}. In particular, estimate \eqref{0.weak} as well as the energy identity \eqref{0.energy} are verified there. The proof of the key estimate \eqref{0.est} (in the non-dissipative form with $\alpha=0$) is verified in Section \ref{s3}. The existence and uniqueness of a weak solution for the Navier-Stokes problem \eqref{0.eqmain} in the uniformly local spaces is verified in Section \ref{s4} and, finally, the dissipative version of estimate \eqref{0.est} (with $\alpha>0$) and the parabolic smoothing property for the weak solutions of \eqref{0.eqmain} are proved in Section \ref{s5}.

\section{Preliminaries}\label{s1}
In that section, we briefly recall the definitions and key properties of weights and weighted Sobolev spaces and state a number of known results on the
regularity of the Leray projector and Stokes operator in these spaces which are crucial for what follows, see \cite{babin1, ZelikGlasgow} for the detailed exposition. We start by defining the  class of admissible weights and associated weighted spaces adopted to the case of the strip $\Omega=\R\times(-1,1)$.

\begin{definition} \label{Def1.weights} A function $\phi(x)$, $x\in\R$, is a weight function of exponential growth rate $\mu>0$ if
\begin{equation}\label{1.wexp}
\phi(x)>0,\ \ \ \phi(x+y)\le Ce^{\mu|y|}\phi(x)
\end{equation}
holds for all $x,y\in\R$. The weighted Lebesgue space $L^p_\phi(\Omega)$, $1\le p\le\infty$, is defined as a subspace of $L^p_{loc}(\Omega)$ for which the following norm is finite:
\begin{equation}\label{1.wlp}
\|u\|_{L^p_\phi}:=\(\int_\Omega \phi^p(x_1)|u(x)|^p\,dx\)^{1/p},
\end{equation}
where $x=(x_1,x_2)\in\Omega$. The uniformly local Lebesgue space $L^p_b(\Omega)$ is determined by the finiteness of the following norm:
\begin{equation}
\|u\|_{L^p_b}:=\sup_{s\in\R}\|u\|_{L^p(\Omega_s)},
\end{equation}
where $\Omega_s:=(s,s+1)\times(0,1)$. As usual, the weighted ($W^{l,p}_\phi(\Omega)$) and uniformly local ($W^{l,p}_b(\Omega)$) Sobolev spaces are defined as spaces of distributions whose derivatives up to order $l$ belong to $L^p_\phi(\Omega)$ (resp. $L^p_b(\Omega)$). This definition works for $l\in\Bbb N$ and, for the non-integer or negative $l$th, the corresponding Sobolev spaces can be defined via the interpolation and duality arguments, see \cite{EfZelCPAM,ZelikCPAM,MirZel} for the details.
\par
We will also need the uniformly local spaces for the functions $u(t,x)$, $x\in\Omega$, depending also on time $t\in\R$, so, for every $1\le p\le\infty$, we define the space $L^p_b(\R\times\Omega)$ by the following norm:
\begin{equation}
\|u\|_{L^p_b(\R\times\Omega)}:=\sup_{(t,s)\in\R^2}\|u\|_{L^p((t,t+1)\times\Omega_s)}
\end{equation}
and the spaces $L^p_b((A,B)\times\Omega)$ are defined analogously. More general, for $1\le p,q\le\infty$, we defined the space $L^q_b(\R,L^p_b(\Omega)$ by the following norm:
$$
\|u\|_{L^q_b(\R,L^p_b(\Omega))}:=\sup_{(t,s)\in\R^2}\|u\|_{L^q(t,t+1;L^p(\Omega_s))}.
$$
\end{definition}
The natural choices of the weights of exponential growth rate are the following ones:
$$
\varphi_{\eb,x_0}(x):=e^{-\eb|x-x_0|},\ \ \bar\varphi_{\eb,x_0}(x):=e^{\sqrt{\eb^2|x-x_0|^2+1}}
$$
which, obviously, have the exponential growth rate $|\eb|$ or the polynomial weights, e.g.,
\begin{equation}\label{1.thetaweight}
\theta_{\eb,x_0}(x):=\frac1{\sqrt{1+\eb^2|x-x_0|^2}}
\end{equation}
This weight, in addition, to \eqref{1.wexp} (which holds for every positive $\mu$), satisfies the following property:
\begin{equation}\label{1.thetaeb}
|\theta_{\eb,x_0}'(x)|\le C\eb\theta_{\eb,x_0}(x)^2\le C\eb\theta_{\eb,x_0}(x)
\end{equation}
which is crucial for what follows. A bit more general are the weights $\theta_{\eb,x_0}(x)^N$, $N\in\R$, $N\ne0$, which are also the weights of exponential growth rate $\mu$ for any $\mu>0$ and satisfy the analog of \eqref{1.thetaeb} where the exponent $2$ is replaced by $\frac{N+1}N$.
\par
The next proposition which gives the equivalent representation of the weighted Sobolev norms in terms of the non-weighted ones is very useful in many estimates.

\begin{proposition}\label{Prop1.wequiv} Let $\phi$ be the weight of exponential growth rate, $1\le p<\infty$ and $l\in\R$. Then
\begin{equation}\label{1.wequiv}
C_2\|u\|^p_{W^{l,p}_\phi(\Omega)}\le\int_{s\in\R}\phi^p(s)\|u\|^p_{W^{l,p}(\Omega_s)}\,ds\le C_1\|u\|^p_{W^{l,p}_\phi(\Omega)},
\end{equation}
where the constants $C_i$ depend only on $\mu$ and constant $C$ involved in \eqref{1.wexp} and are independent of the concrete choice of the weight $\phi$.
\end{proposition}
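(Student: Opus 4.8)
The plan is to prove \eqref{1.wequiv} first for non-negative integers $l$ by Fubini's theorem, and then to pass to arbitrary real $l$ through the duality and interpolation by which the spaces $W^{l,p}_\phi(\Omega)$ are defined for such exponents. The basic mechanism is that a weight of exponential growth rate varies only by a bounded factor on unit scales: using \eqref{1.wexp} with $(x,y)=(s,x_1-s)$ and then with $(x,y)=(x_1,s-x_1)$ gives $C^{-1}e^{-\mu}\phi(s)\le\phi(x_1)\le Ce^{\mu}\phi(s)$ whenever $|x_1-s|\le1$.

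First I would treat $l=0$. Since $x=(x_1,x_2)\in\Omega_s$ is equivalent to $s\in(x_1-1,x_1)$, Fubini's theorem gives
\begin{equation*}
\int_{\R}\phi^p(s)\,\|u\|^p_{L^p(\Omega_s)}\,ds=\int_{\R}\phi^p(s)\int_{\Omega_s}|u(x)|^p\,dx\,ds=\int_{\Omega}|u(x)|^p\(\int_{x_1-1}^{x_1}\phi^p(s)\,ds\)dx ,
\end{equation*}
and, the interval $(x_1-1,x_1)$ having length one, the unit-scale bound shows the inner integral lies between $C^{-p}e^{-\mu p}\phi^p(x_1)$ and $C^pe^{\mu p}\phi^p(x_1)$; hence the middle term of \eqref{1.wequiv} is comparable to $\int_\Omega\phi^p(x_1)|u(x)|^p\,dx=\|u\|^p_{L^p_\phi(\Omega)}$ with constants depending only on $C$ and $\mu$. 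For an integer $l=m\ge1$, using that $\|u\|^p_{W^{m,p}_\phi}=\sum_{|\alpha|\le m}\|\partial^\alpha u\|^p_{L^p_\phi}$ and $\|u\|^p_{W^{m,p}(\Omega_s)}=\sum_{|\alpha|\le m}\|\partial^\alpha u\|^p_{L^p(\Omega_s)}$ up to fixed constants, the claim follows by applying the $l=0$ case to each $\partial^\alpha u$ and summing.

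For negative or non-integer $l$ I would argue by duality and interpolation, in accordance with Definition~\ref{Def1.weights}. After translating each slab $\Omega_s$ onto the fixed unit slab $\Omega_0$ by the isometry $(T_sv)(x_1,x_2):=v(x_1+s,x_2)$, the middle quantity in \eqref{1.wequiv} becomes the $p$-th power of the norm of $s\mapsto T_s(u|_{\Omega_s})$ in the vector-valued space $L^p(\R,\phi^p(s)\,ds;W^{l,p}(\Omega_0))$; moreover the unweighted pairing $\int_\Omega u\bar v$ equals $\int_\R\langle T_s(u|_{\Omega_s}),T_s(v|_{\Omega_s})\rangle_{L^2(\Omega_0)}\,ds$, since each point of $\Omega$ belongs to $\Omega_s$ for $s$ in a set of measure one. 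One then combines the duality $\big(W^{m,p'}(\Omega_0)\big)^*=W^{-m,p}(\Omega_0)$ and the interpolation identity $[W^{l_0,p}(\Omega_0),W^{l_1,p}(\Omega_0)]_\theta=W^{l,p}(\Omega_0)$ on the fixed Lipschitz domain $\Omega_0$ with their vector-valued counterparts $\big(L^{p'}(\mu;X)\big)^*=L^p(\mu;X^*)$ and $[L^p(\mu;X_0),L^p(\mu;X_1)]_\theta=L^p(\mu;[X_0,X_1]_\theta)$ for the fixed $\sigma$-finite measure $\mu=\phi^p(s)\,ds$, together with the fact that $\phi^{-1}$ is again a weight of exponential growth rate $\mu$ with the same constant $C$; this propagates \eqref{1.wequiv} from the integer case to all real $l$, with constants that may now depend on $l$, $p$ and $\Omega$ but stay independent of the concrete weight $\phi$, which enters only through the measure $\mu$ over which these functors act fiberwise.

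The step I expect to be the main obstacle is this last one: one must check that restriction to the slabs $\Omega_s$ (followed by translation to $\Omega_0$) is compatible with the duality and interpolation used to define the fractional and negative-order spaces — that is, that localizing and then interpolating or dualizing yields the same scale of norms as interpolating or dualizing and then localizing. The inequality $\int_\R\phi^p(s)\|u\|^p_{W^{l,p}(\Omega_s)}\,ds\le C_1\|u\|^p_{W^{l,p}_\phi}$ is the easy direction, since the restriction operators are bounded on all the spaces at issue; the reverse bound requires reconstructing $u$ from its localized pieces (for $l<0$ this is done by testing against a suitably mollified family of local test functions), and this is where the measure-theoretic bookkeeping sits. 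Once the reconstruction is set up, everything collapses back to the $l=0$ Fubini identity above, which carries the real content of the proposition.
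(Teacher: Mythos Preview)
The paper gives no proof here; it simply cites \cite{EfZelCPAM} and \cite{ZelikCPAM}. Your Fubini argument for non-negative integer $l$ is correct and is the standard one --- this is what those references do, and it carries the real content of the proposition.

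For general real $l$ your interpolation/duality outline is a legitimate route, and you are right that the reconstruction step is where the work sits; two points need repair, though. First, the duality $(W^{m,p'}(\Omega_0))^*=W^{-m,p}(\Omega_0)$ is not the standard one: $W^{-m,p}(\Omega_0)$ is the dual of $W^{m,p'}_0(\Omega_0)$, and since the restriction $v\mapsto v|_{\Omega_s}$ of a global test function does not land in the zero-trace subspace, the local pairings you want are not automatically defined --- the cure is exactly the partition-of-unity device you allude to at the end. Second, for the interpolation step it is cleaner not to interpolate the (non-complemented) image of $J:u\mapsto(s\mapsto T_s(u|_{\Omega_s}))$ directly, but to build an explicit bounded retraction $R:L^p(\mu;W^{m,p}(\Omega_0))\to W^{m,p}_\phi(\Omega)$, for instance $R(f)(x):=\int_\R\rho(x_1-s)\,(T_{-s}f_s)(x)\,ds$ with $\rho\in C^\infty_0(0,1)$ and $\int\rho=1$; the same Fubini computation shows $R$ is bounded for each integer $m\ge0$ and satisfies $R\circ J=\mathrm{id}$, after which one interpolates the operators $J$ and $R$ separately to reach all $l\ge0$, and then handles $l<0$ by duality against compactly supported test functions. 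With these adjustments your plan goes through.
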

For the proof of this proposition, see e.g., \cite{EfZelCPAM} or \cite{ZelikCPAM}.
\par
The next proposition which connects the weighted and uniformly local norms is the main technical tool for obtaining the estimates of solutions in uniformly local spaces.
\begin{proposition}\label{Prop1.wb} Let $\phi$ be the weight function of exponential growth rate such that $\phi\in L^p(\R)$ and let $1\le p<\infty$. Then,
\begin{equation}\label{1.wb}
C_1\|\phi\|_{L^p}^{-1}\|u\|_{W^{l,p}_\phi(\Omega)}\le\|u\|_{W^{l,p}_b}\le C_2\sup_{s\in\R}\|u\|_{W^{l,p}_{\phi(\cdot-s)}(\Omega)},
\end{equation}
where the constants $C_1$ and $C_2$ depend only on $p$ and $C$ and $\mu$ involved in \eqref{1.wexp} and are independent of the concrete choice of the weight $\phi$.
\end{proposition}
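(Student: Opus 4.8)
The plan is to obtain \eqref{1.wb} directly from the norm equivalence of Proposition~\ref{Prop1.wequiv}, treating integer $l\ge0$ first and then passing to arbitrary real $l$ by interpolation and duality. For integer $l$ one has $\|u\|_{W^{l,p}_\psi(\Omega)}^p=\sum_{|\alpha|\le l}\int_\Omega\psi^p(x_1)|D^\alpha u(x)|^p\,dx$ for any weight $\psi$, and $\|u\|_{W^{l,p}_b}=\sup_{s\in\R}\|u\|_{W^{l,p}(\Omega_s)}$ by definition. I would also impose the (harmless) normalization $\|\phi\|_{L^\infty(\R)}=1$, which is satisfied by the admissible weights actually used in the paper ($\varphi_{\eb,x_0}$ and $\theta_{\eb,x_0}^N$, $N>0$); a quick scaling check $\phi\mapsto\lambda\phi$ shows that some normalization of the size of $\phi$ is in fact needed for the right-hand inequality in \eqref{1.wb} to hold with constants depending only on $p$, $C$ and $\mu$.

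The left inequality follows from the lower bound in \eqref{1.wequiv} together with the trivial pointwise estimate $\|u\|_{W^{l,p}(\Omega_s)}\le\|u\|_{W^{l,p}_b}$:
\[
\|u\|_{W^{l,p}_\phi(\Omega)}^p\le C_0\int_\R\phi^p(s)\,\|u\|_{W^{l,p}(\Omega_s)}^p\,ds\le C_0\,\|u\|_{W^{l,p}_b}^p\int_\R\phi^p(s)\,ds=C_0\,\|\phi\|_{L^p}^p\,\|u\|_{W^{l,p}_b}^p ,
\]
where $C_0=C_0(p,C,\mu)$ comes from the lower bound in \eqref{1.wequiv}; taking $p$-th roots gives the left inequality with $C_1=C_0^{-1/p}$. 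Applying the same estimate to the translated weight $\phi(\cdot-s)$, which has the same $L^p$-norm and the same constants in \eqref{1.wexp}, also shows that the right-hand side of \eqref{1.wb} is finite for every $u\in W^{l,p}_b$.

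For the right inequality, fix a cell $\Omega_{s_0}$. Since $\|\phi\|_{L^\infty}=1$, there is $a^*\in\R$ with $\phi(a^*)\ge 1/2$, and then \eqref{1.wexp} forces $\phi(x)\ge C^{-1}e^{-\mu/2}\phi(a^*)\ge (2C)^{-1}e^{-\mu/2}=:c_0>0$ for all $x$ in the unit interval $I:=[a^*-1/2,a^*+1/2]$, with $c_0=c_0(C,\mu)$. Choosing the shift $s:=s_0-a^*+1/2$ one has $x_1-s\in I$ whenever $x_1\in(s_0,s_0+1)$, hence $\phi(x_1-s)\ge c_0$ on $\Omega_{s_0}$ and
\[
\|u\|_{W^{l,p}(\Omega_{s_0})}^p=\sum_{|\alpha|\le l}\int_{\Omega_{s_0}}|D^\alpha u|^p\,dx\le c_0^{-p}\sum_{|\alpha|\le l}\int_{\Omega_{s_0}}\phi^p(x_1-s)\,|D^\alpha u|^p\,dx\le c_0^{-p}\,\|u\|_{W^{l,p}_{\phi(\cdot-s)}(\Omega)}^p .
\]
Taking the supremum over $s_0\in\R$ gives $\|u\|_{W^{l,p}_b}\le c_0^{-1}\sup_{s\in\R}\|u\|_{W^{l,p}_{\phi(\cdot-s)}(\Omega)}$, that is, the right inequality with $C_2=c_0^{-1}$.

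It remains to extend \eqref{1.wb} to non-integer and negative $l$. For $l\in(m,m+1)$, $m\in\Bbb N$, one interpolates the two inequalities already established at the levels $m$ and $m+1$; note that the identity embedding and, in the right inequality, the shift $s$ depend neither on $l$ nor on $m$, so interpolation applies with no compatibility issue and only universal interpolation constants are introduced. For negative $l$ one argues by duality, using that $\phi^{-1}$ is again a weight of exponential growth rate with the same $\mu$. The only step that requires genuine care is the right inequality — more precisely, producing a single unit interval on which $\phi$ is bounded below by a constant depending only on $C$ and $\mu$; this is exactly where the normalization $\|\phi\|_{L^\infty}=1$ and property \eqref{1.wexp} are used, and the rest of the proof is bookkeeping with Proposition~\ref{Prop1.wequiv}.
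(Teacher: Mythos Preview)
The paper does not actually prove this proposition; it simply refers the reader to \cite{ZelikCPAM}. Your self-contained argument is correct and is essentially the standard one. Your observation about the normalization $\|\phi\|_{L^\infty}=1$ is well taken: as stated, the right inequality in \eqref{1.wb} cannot hold with $C_2$ depending only on $C$, $\mu$ and $p$, since $\phi\mapsto\lambda\phi$ leaves $C$ and $\mu$ unchanged while rescaling the right-hand side; the weights actually used in the paper ($\theta_{\eb,x_0}$, $\varphi_{\eb,x_0}$) all satisfy this normalization, so the imprecision is harmless in practice. The core of your proof---the left inequality via Proposition~\ref{Prop1.wequiv} and Fubini, and the right inequality by locating a unit interval on which the shifted weight is bounded below by a constant depending only on $C$ and $\mu$---is exactly right, and the extension to non-integer $l$ by interpolation is immediate since the paper \emph{defines} those spaces by interpolation.
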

For the proof of this proposition, see \cite{ZelikCPAM}.
\par
We will essentially use the particular case of this estimate with $p=2$ and $\phi=\theta_{\eb,x_0}$ where $\eb\ll1$. Then the left-hand side of \eqref{1.wb} gives
\begin{equation}\label{1.wbeb}
\|u\|_{L^2_{\theta_{\eb,x_0}}}\le C\eb^{-1/2}\|u\|_{L^2_b},
\end{equation}
where $C$ is independent of $\eb$ and $x_0$.
\par
At the next step, we introduce the standard (for the theory of the Navier-Stokes equations) spaces $\Cal H$, $\Cal V$ and $\Cal V^*$:
$$
\Cal H:=[u\in [C^\infty_0(\Omega)]^2,\ \divv u=0]_{[L^2(\Omega)]^2},\ \ \Cal V:=[u\in [C^\infty_0(\Omega)]^2,\ \divv u=0]_{[W^{1,2}(\Omega)]^2},
$$
where $[\cdot]_V$ means the closure in the space $V$ and $\Cal V^*$ stands for the dual space to $\Cal V$ (with respect to the inner product in $\Cal H$). The spaces $\Cal H_\phi$ and $\Cal V_\phi$ are defined analogously (the closure is taken in $[L^2_\phi(\Omega)]^2$ and $[W^{1,2}_\phi(\Omega)]^2$ respectively)

 The following proposition describes the structure of the introduced spaces.

\begin{proposition}\label{Prop1.stream} The spaces $\Cal H$ and $\Cal V$ can be described as follows:
\begin{equation}\label{1.HV}
\Cal H=\{u\in[L^2(\Omega)]^2,\ \Bbb Su_1\equiv0,\ \ \divv u=0,\ \ u_2\big|_{\partial\Omega}=0\},\ \ \Cal V=\Cal H\cap[H^1_0(\Omega)]^2,
\end{equation}
where $\Bbb Sv(x_1):=\frac12\int_{-1}^1v(x_1,x_2)\,dx_2$ and $H^1_0(\Omega):=W^{1,2}(\Omega)\cap\{u\big|_{\partial\Omega}=0\}$. Moreover, let for any
$u\in\Cal H$ (resp. $u\in\Cal V$)
\begin{equation}
\psi:=\Psi(u)=\int_{-1}^{x_2}u_1(x_1,s)\,ds
\end{equation}
be the associated stream function. Then,
$$
u_1=\partial_{x_2}\psi,\ \ u_2=-\partial_{x_1}\psi
$$
and the operator $\Psi$ realizes the isomorphism between $\Cal H$ and $H^1_0(\Omega)$ (resp. between $\Cal V$ and $H^2_0(\Omega)$). Furthermore, for any weight $\phi$ of exponential growth rate, the analogous desription and the analogous isomorphism works for the weighted spaces $\Cal H_\phi$ and $\Cal V_\phi$ as well.
\end{proposition}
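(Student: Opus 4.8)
The plan is to reduce the whole statement to the elementary relation between a divergence-free planar field and its stream function, and then to pass to closures. I would first establish the correspondence at the smooth level: $\Psi$ restricts to a bijection between $\{u\in[C^\infty_0(\Omega)]^2:\divv u=0\}$ and $C^\infty_0(\Omega)$, with inverse $\psi\mapsto(\partial_{x_2}\psi,-\partial_{x_1}\psi)$. Given such a $u$ and $\psi:=\Psi(u)$, the fundamental theorem of calculus gives $\partial_{x_2}\psi=u_1$; integrating $\divv u=0$ in $x_2$ over $(-1,x_2)$ and using $u_2|_{x_2=-1}=0$ gives $\partial_{x_1}\psi=-u_2$; and integrating $\divv u=0$ over the full cross-section gives $\partial_{x_1}(\Bbb Su_1)\equiv0$, hence $\Bbb Su_1\equiv0$ because $u$ has compact support. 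Thus $\psi$ vanishes on $\{x_2=-1\}$ by construction and on $\{x_2=1\}$ since $\psi|_{x_2=1}=2\Bbb Su_1$, and being smooth with support in a compact subset of the open strip it lies in $C^\infty_0(\Omega)$; the converse inclusion is immediate. Since $|u|^2=|\Nx\psi|^2$ pointwise and the one-dimensional Poincar\'e inequality on $(-1,1)$ (applied for each fixed $x_1$, using the Dirichlet condition at $x_2=\pm1$) yields $\|\psi\|_{L^2}\le C\|\partial_{x_2}\psi\|_{L^2}$ and the corresponding bounds for $\Nx\psi$, we obtain $\|u\|_{[L^2(\Omega)]^2}\sim\|\psi\|_{H^1_0(\Omega)}$ and $\|u\|_{[W^{1,2}(\Omega)]^2}\sim\|\psi\|_{H^2_0(\Omega)}$ on these dense subsets. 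Passing to closures produces the isomorphisms $\Cal H\cong H^1_0(\Omega)$ and $\Cal V\cong H^2_0(\Omega)$.

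Next I would prove the explicit description \eqref{1.HV}. For "$\subseteq$", the three conditions $\divv u=0$, $\Bbb Su_1=0$ and the vanishing of the normal trace $u_2|_{\partial\Omega}$ (which is meaningful since $\divv u=0\in L^2(\Omega)$) hold for the generators of $\Cal H$ and are preserved under $[L^2(\Omega)]^2$-convergence, and $\Cal V\subseteq\Cal H\cap[H^1_0(\Omega)]^2$ is trivial. For "$\supseteq$", let $u$ belong to the right-hand side and set $\psi:=\Psi(u)$. The operator $\Psi$ is bounded $[L^2(\Omega)]^2\to L^2(\Omega)$ (one has $\|\Psi(u)\|_{L^2}\le 2\|u_1\|_{L^2}$), so it is enough to check $\psi\in H^1_0(\Omega)$ with $\Nx\psi=(-u_2,u_1)$ after mollifying $u$ in the $x_1$ variable (which keeps $u$ in the same class and makes it converge to $u$ in $[L^2(\Omega)]^2$) and then to pass to the limit. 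For $u$ smooth in $x_1$ the computations of the first paragraph apply verbatim and give $\partial_{x_2}\psi=u_1$, $\partial_{x_1}\psi=-u_2$, $\psi|_{x_2=-1}=0$ and $\psi|_{x_2=1}=2\Bbb Su_1=0$, so $\psi\in H^1_0(\Omega)$. Approximating $\psi$ by $\psi_n\in C^\infty_0(\Omega)$ in $H^1_0(\Omega)$, the fields $(\partial_{x_2}\psi_n,-\partial_{x_1}\psi_n)\in[C^\infty_0(\Omega)]^2$ are divergence-free and converge to $u$ in $[L^2(\Omega)]^2$, hence $u\in\Cal H$. If in addition $u\in[H^1_0(\Omega)]^2$, then $\Nx\psi=(-u_2,u_1)\in[H^1_0(\Omega)]^2$ forces $\psi\in H^2(\Omega)$ with $\psi|_{\partial\Omega}=0$ and $\partial_{x_2}\psi|_{\partial\Omega}=u_1|_{\partial\Omega}=0$, i.e. $\psi\in H^2_0(\Omega)$, and the same approximation carried out in $H^2_0(\Omega)$ gives $u\in\Cal V$.

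For the weighted assertions I would note that the pointwise identities $\partial_{x_2}\psi=u_1$, $\partial_{x_1}\psi=-u_2$ and $|u|^2=|\Nx\psi|^2$ do not involve the weight, that since $\phi=\phi(x_1)$ depends only on $x_1$ the one-dimensional Poincar\'e inequality in $x_2$ still yields $\|u\|_{[L^2_\phi(\Omega)]^2}\sim\|\psi\|_{W^{1,2}_\phi(\Omega)}$ for $\psi|_{\partial\Omega}=0$ (and the analogous equivalence with $W^{2,2}_\phi(\Omega)$), and that $\Bbb Su_1\in L^2_\phi(\R)$ whenever $u\in[L^2_\phi(\Omega)]^2$, again because $\phi$ is independent of $x_2$. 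The only genuinely new ingredient is that $[C^\infty_0(\Omega)]^2$ is still dense in $\Cal H_\phi$ and $\Cal V_\phi$ and that the closure of $C^\infty_0(\Omega)$ in $W^{l,2}_\phi(\Omega)$, $l=1,2$, is precisely the subspace defined by the vanishing of $\psi$ (and of $\partial_{x_2}\psi$ when $l=2$) on $\partial\Omega$; this follows from the fact that weights of exponential growth rate behave well under $x_1$-truncation and mollification, so the approximation procedure is the same as in the unweighted case. With these remarks the arguments of the first two paragraphs transfer word for word.

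I expect the delicate point to be the implication "$u$ lies in the right-hand side of \eqref{1.HV} $\Rightarrow\Psi(u)\in H^1_0(\Omega)$", that is, justifying rigorously that $\partial_{x_1}\Psi(u)=-u_2$ in $\Cal D'(\Omega)$ and that $\Psi(u)$ has zero boundary trace: $\Psi(u)$ is a priori only an $x_2$-antiderivative of an $L^2$-function, and this identity is exactly where the constraints $\divv u=0$ and $u_2|_{\partial\Omega}=0$ must be used; once it is available, the rest is Poincar\'e-type bookkeeping and routine density arguments.
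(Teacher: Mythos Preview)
The paper does not actually prove this proposition; it simply refers to \cite{ZelikGlasgow} for the argument. Your sketch is correct and is the standard route: set up the stream-function bijection on smooth compactly supported divergence-free fields, read off the norm equivalences from $|u|^2=|\Nx\psi|^2$ together with the one-dimensional Poincar\'e inequality in $x_2$, pass to closures, and then identify $\Cal H$ with the right-hand side of \eqref{1.HV} by the mollification-in-$x_1$ device. The point you single out as delicate is indeed the only nontrivial one, and your treatment is sound: once $u$ is mollified in $x_1$, the divergence constraint forces $\partial_{x_2}u_2^\eb=-\partial_{x_1}u_1^\eb\in L^2$, so $u_2^\eb(x_1,\cdot)\in H^1(-1,1)$ for a.e.\ $x_1$ and the fundamental theorem of calculus applies, yielding $\partial_{x_1}\Psi(u^\eb)=-u_2^\eb$ and $\Psi(u^\eb)\in H^1_0(\Omega)$; passing to the limit $\eb\to0$ then gives $\Psi(u)\in H^1_0(\Omega)$. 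The weighted case goes through exactly as you say, since the weight depends only on $x_1$ and therefore does not interfere with the Poincar\'e inequality in $x_2$ or with the stream-function identities.
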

For the proof of this proposition, see e.g., \cite{ZelikGlasgow}.
\par
Ananlogously to \eqref{1.HV}, we define the {\it uniformly local} spaces $\Cal H_b$ and $\Cal V_b$ via
\begin{equation}\label{1.HVB}
\Cal H_b=\{u\in[L^2(\Omega)]^2,\ \Bbb Su_1\equiv0,\ \ \divv u=0,\ \ u_2\big|_{\partial\Omega}=0\},\ \ \Cal V_b=\Cal H_b\cap[W^{1,2}_b(\Omega)]^2\cap\{u\big|_{\partial\Omega}=0\},
\end{equation}
Note that, in contrast to the case of spaces $\Cal H$ or $\Cal H_\phi$, the uniformly local spaces $\Cal H_b$ and $\Cal V_b$ do not coincide with the closure of $C^\infty_0(\Omega)$ in the proper uniformly local norms. However, the operator $\Psi$ is still the isomorphism between the corresponding uniformly local spaces, see \cite{ZelikGlasgow} for more details.
\par
We now recall that the space $\Cal H$ is orthogonal to any gradient vector field and, due to the Leray-Helmholtz decomposition, any vector field
$u\in[L^2(\Omega)]^2$ can be presented in a unique way as a sum
\begin{equation}
u=v+\Nx p,\ \ v\in\Cal H.
\end{equation}
Therefore, the Leray (ortho)projector $\Pi:[L^2(\Omega)]^2\to\Cal H$ is well defined via $\Pi u:=v$. We also recall that the Stokes operator is defined as the following  self-adjoint operator in $\Cal H$:
\begin{equation}\label{1.stokes}
A:=-\Pi\Dx,\ \ D(A)=\Cal V\cap [H^2(\Omega)]^2.
\end{equation}
The next standard proposition gives the description of domains of its fractional powers. This result will be used in the next section for deriving the weighted energy estimates.
\begin{proposition}\label{Prop1.fracS} Let $\kappa\in(0,1)$. Then the domain of $A^\kappa$ possesses the following description:
\begin{equation}\label{1.fracS}
D(A^\kappa)=[D((-\Dx)^\kappa)]^2\cap \Cal H,
\end{equation}
where $D((-\Dx)^\kappa)$ is the domain of the fractional Laplacian with Dirichlet boundary conditions.
\end{proposition}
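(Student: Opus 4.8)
The plan is to base the proof on the abstract interpolation description of domains of fractional powers of positive self-adjoint operators, combined with the scalar stream-function reformulation of the Stokes operator; the naive vectorial argument through the Leray projector is, for the reason stressed in the Introduction, unavailable here. For a positive self-adjoint operator $\Lambda$ on a Hilbert space $H$ one has $D(\Lambda^\kappa)=[H,D(\Lambda)]_\kappa$ for $\kappa\in(0,1)$, the complex and real $(\kappa,2)$ interpolation methods coinciding in this self-adjoint setting. Applying this to $A$ on $\Cal H$ and to the Dirichlet Laplacian $-\Dx$ on $[L^2(\Omega)]^2$ (with scalar domain $D(-\Dx)=H^2(\Omega)\cap H^1_0(\Omega)$) gives $D(A^\kappa)=[\Cal H,D(A)]_\kappa$ and $[D((-\Dx)^\kappa)]^2=[[L^2(\Omega)]^2,[D(-\Dx)]^2]_\kappa$. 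Since
$$
D(A)=\Cal V\cap[H^2(\Omega)]^2=\Cal H\cap[H^2(\Omega)\cap H^1_0(\Omega)]^2=\Cal H\cap[D(-\Dx)]^2 ,
$$
the inclusion $D(A^\kappa)\subset[D((-\Dx)^\kappa)]^2\cap\Cal H$ is immediate: $D(A^\kappa)\subset\Cal H$ trivially, while interpolating the continuous embeddings $\Cal H\hookrightarrow[L^2(\Omega)]^2$ and $D(A)\hookrightarrow[D(-\Dx)]^2$ yields $D(A^\kappa)\hookrightarrow[D((-\Dx)^\kappa)]^2$.

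The content of the proposition is therefore the reverse inclusion, and here the main obstacle appears: the usual trick would be to show that the Leray projector $\Pi$ maps $[D((-\Dx)^\kappa)]^2$ boundedly onto $D(A^\kappa)$ and then to write $u=\Pi u\in D(A^\kappa)$ for $u\in[D((-\Dx)^\kappa)]^2\cap\Cal H$; but $\Pi$ does not preserve Dirichlet boundary conditions, so it is not even bounded on $[D((-\Dx)^\kappa)]^2$ once $2\kappa>\tfrac12$, and the argument collapses for $\kappa>\tfrac14$. To go around this I would conjugate $A$ by the stream-function isomorphism $\Psi$ of Proposition~\ref{Prop1.stream}. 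Endowing $H^1_0(\Omega)$ with the norm $\|\Nx\psi\|_{L^2}$, the identity $\|u\|_{L^2}^2=\|\d_{x_2}\psi\|_{L^2}^2+\|\d_{x_1}\psi\|_{L^2}^2$ shows that $\Psi\colon\Cal H\to H^1_0(\Omega)$ is an isometric isomorphism, and it restricts to an isomorphism of $D(A)=\Cal V\cap[H^2(\Omega)]^2$ onto $H^3(\Omega)\cap H^2_0(\Omega)$ (because $u=(\d_{x_2}\psi,-\d_{x_1}\psi)\in[H^2(\Omega)]^2$ iff $\psi\in H^3(\Omega)$). Hence $B:=\Psi A\Psi^{-1}$ is a positive self-adjoint operator on $H^1_0(\Omega)$ with $D(B)=H^3(\Omega)\cap H^2_0(\Omega)$, and $\Psi$ identifies $D(A^\kappa)$ with $D(B^\kappa)=[H^1_0(\Omega),\,H^3(\Omega)\cap H^2_0(\Omega)]_\kappa$.

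It then remains to transport $[D((-\Dx)^\kappa)]^2\cap\Cal H$ through $\Psi$ and match. For $u\in\Cal H$ one has $\Nx\psi=(-u_2,u_1)$, and the constraints $u_2|_{\d\Omega}=0$ and $\Bbb Su_1\equiv0$ (the remaining defining condition $\divv u=0$ being automatic for $u=(\d_{x_2}\psi,-\d_{x_1}\psi)$) are exactly equivalent to the vanishing of $\psi=\Psi(u)$ on both components $\{x_2=\pm1\}$ of $\d\Omega$; so $\Psi$ maps $[D((-\Dx)^\kappa)]^2\cap\Cal H$ bicontinuously onto $\{\psi:\ \Nx\psi\in[D((-\Dx)^\kappa)]^2,\ \psi|_{\d\Omega}=0\}$. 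Now, by the standard description of the fractional Dirichlet-Laplacian domains --- $D((-\Dx)^\kappa)=H^{2\kappa}(\Omega)$ for $\kappa\in(0,\tfrac14)$ and $D((-\Dx)^\kappa)=\{f\in H^{2\kappa}(\Omega):f|_{\d\Omega}=0\}$ for $\kappa\in(\tfrac14,1)$, with the Lions--Magenes space at $\kappa=\tfrac14$ --- together with the interpolation of Sobolev spaces subject to Dirichlet-type boundary conditions, this set equals $H^{1+2\kappa}(\Omega)\cap H^1_0(\Omega)$ for $\kappa\in(0,\tfrac14)$ and $H^{1+2\kappa}(\Omega)\cap H^2_0(\Omega)$ for $\kappa\in(\tfrac14,1)$, i.e. precisely $D(B^\kappa)$; the extra boundary condition gained for $\kappa>\tfrac14$ comes from $\d_{x_2}\psi|_{\d\Omega}=u_1|_{\d\Omega}=0$, forced by $\Nx\psi\in[D((-\Dx)^\kappa)]^2$, which upgrades $H^1_0$ to $H^2_0$. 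Conjugating back by $\Psi$ gives the claim. The only delicate point is the critical exponent $\kappa=\tfrac14$, where the relevant trace is borderline and the Lions--Magenes space must be used on both sides; the exponent $\kappa=\tfrac34$, exceptional for the vectorial Stokes operator in general domains, causes no trouble here because $D(B)$ carries no boundary condition of order $\ge2$, so the scale $D(B^\kappa)$ has no jump there.
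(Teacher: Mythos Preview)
The paper does not supply its own proof of this proposition; it simply refers to Amann~\cite{Amann} (and to Abels~\cite{Abels}, Revina--Yudovich~\cite{Judovich} for the $L^p$ extension). Those references obtain the result through general operator-theoretic machinery --- bounded imaginary powers, $H^\infty$-calculus, resolvent estimates for the Stokes operator --- valid in arbitrary dimension and for fairly general (cylindrical) domains.

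Your route is genuinely different and specific to the two-dimensional strip: you conjugate $A$ by the stream-function isometry $\Psi$ of Proposition~\ref{Prop1.stream} to pass to a scalar self-adjoint operator $B=\Psi A\Psi^{-1}$ on $H^1_0(\Omega)$ with $D(B)=H^3\cap H^2_0$, and then the question becomes the interpolation identity $[H^1_0,\,H^3\cap H^2_0]_\kappa=H^{1+2\kappa}\cap H^1_0$ for $\kappa<\tfrac14$ and $H^{1+2\kappa}\cap H^2_0$ for $\kappa>\tfrac14$. That identity is indeed standard --- it falls under the Lions--Magenes/Grisvard theory of interpolation of Sobolev spaces subject to a normal system of boundary operators (here $\gamma_0,\gamma_1$), see e.g.\ \cite{triebel} --- though you invoke it rather tersely and it is the one step that deserves an explicit reference. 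The argument is correct and pleasantly elementary; it neatly sidesteps the Leray-projector obstruction for $\kappa\ge\tfrac14$ that you rightly flag. What it buys over the cited literature is transparency and the avoidance of heavy functional-calculus tools; what it gives up is generality, since the stream-function reduction is unavailable beyond two dimensions, whereas the references the paper cites deliver the result uniformly in $n\ge2$ and in $L^p$.
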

For the proof of this result, see e.g., \cite{Amann}, see also \cite{Abels, Judovich} where the analogous result is obtained not only for $L^2$, but also for the $L^p$-spaces, $1<p<\infty$.
\par
Since, the description of the domains of the fractional Laplacian is well-known, see e.g., \cite{triebel},
\begin{equation}\label{1.fracL}
D((-\Dx)^\kappa)=
\begin{cases}
W^{2\kappa,2}(\Omega),\ \ \kappa<1/4;\\
W^{2\kappa,2}(\Omega)\cap\{u\big|_{\partial\Omega}=0\},\ \ \kappa>1/4;\\
W^{2\kappa,2}(\Omega)\cap\{\int_{\Omega}\frac1{1-x_2^2}|u(x)|^2\,dx<\infty\},\ \ \kappa=1/4,
\end{cases}
\end{equation}
Proposition \ref{Prop1.fracS} gives the description of $D(A^\kappa)$ in terms of the usual Sobolev spaces.
\par
The next result gives the regularity of the Leray projector and the Stokes operator in weighted and uniformly local Sobolev spaces.

\begin{proposition}\label{Prop1.wLStokes}Let $\phi$ be the weight of a sufficiently small exponential growth rate. Then, for any $l\ge0$ and $1<p<\infty$, the Leray projector $\Pi$ can be extended in a unique way by continuity to the continuous operator
\begin{equation}\label{1.wp}
\Pi:[W^{l,p}_\phi(\Omega)]^2\to [W^{l,p}_\phi(\Omega)]^2
\end{equation}
and the norm of this operator depends  on $l$, $p$ and the constant $C$ involved into the inequality \eqref{1.wexp} and is uniformly bounded with respect to the concrete choice of $\phi$. Furthermore, analogously, the Stokes operator $A$ can be extended to the isomorphism
\begin{equation}\label{1.ws}
A: [W^{l+2,2}_\phi(\Omega)]^2\cap\Cal H_\phi\cap \{u\big|_{\partial\Omega}=0\}\to [W^{l,2}_\phi(\Omega)]^2\cap\Cal H_\phi
\end{equation}
and the norms of $A$ and $A^{-1}$ are uniformly bounded with respect to the concrete choice of $\phi$. Moreover, the analogous results hold for the uniformly local Sobolev spaces as well.
\end{proposition}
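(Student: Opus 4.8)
The plan is to reduce both assertions to the elliptic regularity of the Neumann Laplacian and of the stationary Stokes system on the strip, first in the usual Sobolev spaces $W^{l,p}(\Omega)$ and then, by a localization argument built on Propositions \ref{Prop1.wequiv} and \ref{Prop1.wb}, in the weighted and uniformly local ones. One first passes to the scalar reformulations: $\Pi u=u-\Nx q$, where $q$ solves the Neumann problem $\Dx q=\divv u$ in $\Omega$ with $\d_{x_2}q\big|_{\partial\Omega}=u_2\big|_{\partial\Omega}$ (so that $\divv(\Pi u)=0$ and $(\Pi u)\cdot n\big|_{\partial\Omega}=0$), while $A^{-1}f=u$, where $(u,p)$ solves $-\Dx u+\Nx p=f$, $\divv u=0$, $u\big|_{\partial\Omega}=0$. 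For $\phi\equiv1$ the a priori bounds $\|\Nx q\|_{W^{l,p}(\Omega)}\le C\|u\|_{W^{l,p}(\Omega)}$ and $\|u\|_{W^{l+2,2}(\Omega)}\le C\|f\|_{W^{l,2}(\Omega)}$ are classical elliptic regularity of Agmon--Douglis--Nirenberg type which, for the strip, one may also read off from the partial Fourier transform in $x_1$ and the one-dimensional resolvent estimates on the cross-section $(-1,1)$; this gives \eqref{1.wp} and \eqref{1.ws} in the non-weighted case.

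For the weighted case one localizes. Fix a smooth partition of unity $\sum_{k\in\Bbb Z}\eta_k\equiv1$ with $\operatorname{supp}\eta_k\subset(k-1,k+1)\times(-1,1)$ and derivatives bounded uniformly in $k$, and for $T\in\{\Pi,A^{-1}\}$ write $Tu=\sum_k T(\eta_k u)$. The summands with $|k-s|\le2$ are controlled by the non-weighted estimate of the first step, so everything hinges on an off-diagonal bound, for $|k-s|>2$, of the form $\|T(\eta_k u)\|_{W^{m,q}(\Omega_s)}\le Ce^{-\delta_0|k-s|}\|u\|_{L^q(\Omega_{k-1}\cup\Omega_k)}$ with a fixed rate $\delta_0>0$ (here $(m,q)=(l,p)$ for $T=\Pi$ and $(m,q)=(l+2,2)$ for $T=A^{-1}$). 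Such decay is a consequence of the spectral gap of the cross-section: expanding in the transverse eigenfunctions (Neumann modes for the pressure, clamped modes for the Stokes operator via its stream-function representation) and Fourier-transforming in $x_1$, every transverse mode of index $n\ge1$ solves a one-dimensional problem whose operator is bounded below by a fixed positive constant, so the associated Green's functions and their derivatives decay like $e^{-\delta_0|x_1-y_1|}$ for $|x_1-y_1|\ge1$. Granted this, for each $s$ one bounds $\|Tu\|_{W^{m,q}(\Omega_s)}$ by $C\sum_k e^{-\delta_0'|k-s|}(\|u\|_{W^{m,q}(\Omega_{k-1})}+\|u\|_{W^{m,q}(\Omega_k)})$ with any $\delta_0'<\delta_0$; raising to the power $q$, multiplying by $\phi^q(s)$, integrating in $s$, inserting $\phi(s)\le Ce^{\mu|k-s|}\phi(k)$ from \eqref{1.wexp} with $\mu<\delta_0'$, and using the (weighted) discrete Young inequality together with Proposition \ref{Prop1.wequiv}, one obtains \eqref{1.wp} and \eqref{1.ws} with constants depending only on $l,p$ and on $C,\mu$ in \eqref{1.wexp}. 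The uniformly local versions follow by the same computation with integrals in $s$ replaced by suprema, or from the weighted case via Proposition \ref{Prop1.wb} applied to $\phi=\theta_{\eb,s}^N$ with $N$ large.

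For the isomorphism claim the bound $\|Au\|_{W^{l,2}_\phi}\le C\|u\|_{W^{l+2,2}_\phi}$ is immediate, since $A=-\Pi\Dx$ is a differential operator followed by the (now bounded) Leray projector; bijectivity between the indicated spaces is obtained by defining $u:=A^{-1}f$ through the exponentially decaying Green's operator above, which maps $[W^{l,2}_\phi]^2\cap\Cal H_\phi$ into $[W^{l+2,2}_\phi]^2$ precisely because $\delta_0>\mu$, for surjectivity, and by observing that any $u\in\Cal V_\phi$ with $Au=0$ is, transverse mode by transverse mode, a combination of cross-sectional profiles times exponentials $e^{\lambda x_1}$ with $|\operatorname{Re}\lambda|$ bounded below by a fixed positive number exceeding $\mu$, hence $u\equiv0$, for injectivity. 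The step I expect to be most delicate is the off-diagonal decay for the transverse mean mode ($n=0$), where the cross-sectional operator has no gap: here one must use the precise structure of $\Cal H$ --- in particular the constraint $\Bbb S u_1\equiv0$ and the stream-function isomorphism of Proposition \ref{Prop1.stream} --- to check that this mode produces either a bounded local-in-$x_1$ operator or no long-range contribution at all; once this is in place, the passage to the weighted and uniformly local spaces is routine. This is, in essence, the argument of \cite{babin1,ZelikGlasgow}, to which we refer for the remaining details.
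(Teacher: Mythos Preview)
The paper does not actually prove this proposition; it simply states ``The proof of this result can be found, e.g., in \cite{ZelikGlasgow}, see also \cite{babin}.'' Your sketch is consistent with the approach in those references: reduce to the non-weighted case via classical elliptic regularity, then pass to weighted and uniformly local spaces by a localization argument exploiting the exponential decay of the Green's operators together with Proposition~\ref{Prop1.wequiv}. Since you yourself note at the end that this is ``in essence, the argument of \cite{babin1,ZelikGlasgow},'' there is nothing substantive to compare --- your outline matches the intended route, and the caveat you flag about the zero transverse mode is indeed the place where the structure of $\Cal H_\phi$ (the mean-flux constraint $\Bbb S u_1\equiv0$) must be invoked.
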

The proof of this result can be found, e.g., in \cite{ZelikGlasgow}, see also \cite{babin}.
\par
We also state the analogue of the regularity result for the Stokes operator in negative Sobolev spaces.
\begin{proposition}\label{Prop1.stokes} Let $\phi$ be the weight function of sufficiently small exponential growth rate. Then, for every $g\in [W^{-1,2}(\Omega)]^2$, there is a unique solution $u\in\Cal V_\phi$ of the Stokes problem
\begin{equation}
\Dx u-\Nx p=g,\ \ \divv u=0
\end{equation}
and the following estimate holds:
\begin{equation}\label{1.stokesm1}
\|u\|_{\Cal V_\phi}\le C\|g\|_{[W^{-1,2}_\phi(\Omega)]^2},
\end{equation}
where the constant $C$ is independent of the concrete choice of the weight $\phi$. The analogous result holds also for the uniformly local spaces.
\end{proposition}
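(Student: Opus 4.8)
The plan is to use the stream-function isomorphism of Proposition~\ref{Prop1.stream} to replace the weighted Stokes estimate by a weighted energy estimate for the fourth-order Dirichlet problem $\Dx^2\psi=h$ on the strip, in which the pressure no longer appears; the assumption that the exponential growth rate $\mu$ of $\phi$ be small is then exactly what is needed to absorb the error terms produced when a derivative falls on the weight. Before starting I would replace $\phi$ by an equivalent smooth weight satisfying $|\phi'|\le C\mu\phi$ and $|\phi''|\le C\mu^2\phi$ (as is possible for the model weights $\varphi_{\eb,x_0}$ and $\theta_{\eb,x_0}^N$, and in general by mollification), which affects the constants only in a controlled way, and I would read the right-hand side of the Stokes system as an element of $[W^{-1,2}_\phi(\Omega)]^2$, with $W^{-1,2}_\phi=(W^{1,2}_{\phi^{-1}})^*$ (note $\phi^{-1}$ again has growth rate $\mu$).

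For the a~priori bound, let $u\in\Cal V_\phi$ solve the Stokes problem, and first assume $u$ decays at infinity (which will hold for the approximations below). Putting $\psi=\Psi(u)\in H^2_0(\Omega)$, so that $u=(\d_{x_2}\psi,-\d_{x_1}\psi)$, and applying the two-dimensional curl to $\Dx u-\Nx p=g$ gives the scalar equation $\Dx^2\psi=-\operatorname{curl} g$ with $\psi\big|_{\d\Omega}=\d_n\psi\big|_{\d\Omega}=0$. Multiplying by $\phi^2\psi$ and integrating by parts twice yields $\int_\Omega\phi^2|\Dx\psi|^2\,dx$ as the principal term, error terms bounded by $C\mu\|\phi\Dx\psi\|_{L^2}(\|\phi\Nx\psi\|_{L^2}+\|\phi\psi\|_{L^2})$, and the pairing of $-\operatorname{curl} g$ with $\phi^2\psi$; moving the curl back onto the test function turns the latter into $\pm(g,\phi^2u)$ plus a term of order $\mu$, which is bounded by $C\|g\|_{[W^{-1,2}_\phi]^2}\|u\|_{[W^{1,2}_\phi]^2}$. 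Now the weighted Poincar\'e inequality in $x_2$ (legitimate since $\phi=\phi(x_1)$ and $\psi$ vanishes on $\d\Omega$) gives $\|\phi\psi\|_{L^2}\le C\|\phi\Nx\psi\|_{L^2}$; integrating $\int\phi^2|\Nx\psi|^2$ by parts once and absorbing the $O(\mu)$ remainder gives $\|\phi\Nx\psi\|_{L^2}\le C\|\phi\Dx\psi\|_{L^2}$; and $\int\phi^2|D^2\psi|^2=\int\phi^2|\Dx\psi|^2$ up to an $O(\mu)$ remainder because $\psi\in H^2_0$. Collecting everything, bounding $\|u\|_{[W^{1,2}_\phi]^2}\le C\|\psi\|_{W^{2,2}_\phi}\le C\|\phi\Dx\psi\|_{L^2}$, and absorbing all $O(\mu)$-contributions into the left-hand side (possible for $\mu$ small) gives $\|\phi\Dx\psi\|_{L^2}\le C\|g\|_{[W^{-1,2}_\phi]^2}$, hence $\|u\|_{\Cal V_\phi}\le C\|g\|_{[W^{-1,2}_\phi]^2}$ with $C$ depending only on $\mu$ and the constant in \eqref{1.wexp}.

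To obtain existence I would write $g=g_0+\d_{x_1}g_1+\d_{x_2}g_2$ with $g_i\in[L^2_\phi(\Omega)]^2$, truncate each $g_i$ to compact support to get $g^{(n)}\in[C^\infty_0(\Omega)]^2$ with $g^{(n)}\to g$ in $[W^{-1,2}_\phi]^2$, solve the Stokes problem for each $g^{(n)}$ by classical strip theory (the solutions $u^{(n)}$ decay exponentially since the Stokes operator on a strip has spectrum bounded away from zero, provided $\mu$ is smaller than this decay rate), apply the a~priori estimate, and pass to the limit using $\|u^{(n)}-u^{(m)}\|_{\Cal V_\phi}\le C\|g^{(n)}-g^{(m)}\|_{[W^{-1,2}_\phi]^2}$. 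Uniqueness follows by running the same energy computation with a cutoff $\chi_R(x_1)$ (supported in $|x_1|\le 2R$, $\equiv1$ on $|x_1|\le R$) in place of the decay assumption: for $u\in\Cal V_\phi$ with $Au=0$, the genuine error terms stay of order $\mu$ and are absorbable, while the terms involving $\Nx\chi_R$ are supported on $R\le|x_1|\le2R$ and tend to $0$ as $R\to\infty$ because $\phi^2(|D^2\psi|^2+|\Nx\psi|^2+|\psi|^2)\in L^1(\Omega)$, so $\Dx\psi\equiv0$ and $u\equiv0$. Finally, the uniformly local statement follows from the weighted one by applying it to the family $\theta_{\eb,x_0}$, $x_0\in\R$ (with $\eb$ fixed small), together with Propositions~\ref{Prop1.wequiv} and \ref{Prop1.wb}, as in \cite{ZelikGlasgow}. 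The one place where care is genuinely needed is the pressure: multiplying the Stokes system directly by $\phi^2u$ leaves the term $-\int_\Omega 2\phi\phi'\,p\,u_1\,dx$, which cannot be controlled without further information on $p$; the stream-function reduction removes it altogether (equivalently, one may use $\Bbb Su_1\equiv0$ to integrate $p$ by parts in $x_2$ against the stream function and replace $\d_{x_2}p$ by $\Dx u_2-g_2$, which is the same gain).
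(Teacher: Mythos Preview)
The paper does not actually prove this proposition: its ``proof'' consists only of the sentence ``The proof of this result can be found in \cite{ZelikGlasgow} and \cite{babin}.'' So there is nothing to compare against directly, and what you have written is a self-contained argument where the paper offers none.

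Your approach is correct and is in fact the natural one in this setting: reducing via the stream-function isomorphism of Proposition~\ref{Prop1.stream} to the weighted biharmonic problem $\Dx^2\psi=-\operatorname{curl} g$ with clamped boundary conditions, and then running a weighted energy estimate in which the pressure has disappeared. The chain of inequalities (weighted Poincar\'e in $x_2$, $\|\phi\Nx\psi\|_{L^2}\le C\|\phi\Dx\psi\|_{L^2}$, and the weighted version of $\int|D^2\psi|^2=\int|\Dx\psi|^2$ for $\psi\in H^2_0$) is handled correctly, with all commutator terms of order $\mu$ and hence absorbable. The existence-by-approximation and uniqueness-by-cutoff steps are also sound; for the latter note that taking $\chi_R$ with $|\chi_R'|\le C$ uniformly in $R$ suffices, since the annular integrals vanish as $R\to\infty$ by integrability of $\phi^2(|D^2\psi|^2+|\Nx\psi|^2+|\psi|^2)$. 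This stream-function route is precisely the device the paper itself leans on elsewhere (e.g.\ in the construction of the corrector $v_\phi$ and in Lemma~\ref{Lem5.com}), so your argument is very much in the spirit of \cite{ZelikGlasgow}, and almost certainly close to what is done there.
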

The proof of this result  can be found in \cite{ZelikGlasgow} and \cite{babin}.
\par
We conclude this section by given the result on solvability of the non-stationary Stokes problem in weighted spaces for the case of strong solutions.
\begin{proposition}\label{Prop1.strongw} Let $\phi$ be the weight function of sufficiently small exponential growth rate. Then, for every $g\in L^2(0,T;L^2_\phi(\Omega))$ and every $u_0\in\Cal V_\phi$, there is a unique solution $u(t)$ of the problem
\begin{equation}\label{1.NStokes}
\Dt u-\Dx u+\Nx p=g(t),\ \divv u=0,\ \ u\big|_{\partial\Omega}=0,\ u\big|_{t=0}=u_0
\end{equation}
which satisfies $\Dt u,\Dx u\in L^2(0,T;L^2_\phi)$ and the following estimate holds:
\begin{multline}\label{1.nonS}
\|u(t)\|_{L^2_\phi}^2+\|\Dt u\|^2_{L^2(\max\{0,t-1\},t;L^2_\phi)}+\|\Dx u\|^2_{L^2(\max\{0,t-1\},t;L^2_\phi)}\le \\\le C\|u_0\|^2_{L^2_\phi}e^{-\alpha t}+\int_0^te^{-\alpha(t-s)}\|g(s)\|^2_{L^2_\phi}\,ds,
\end{multline}
where $C$ and $\alpha>0$ are independent of $t\ge0$, $u_0$, $g$ and the concrete choice of the weight $\phi$.
\end{proposition}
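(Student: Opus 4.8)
The plan is to establish \eqref{1.nonS} first as an \emph{a priori} bound for smooth solutions and then to deduce existence by a routine approximation, uniqueness being a by-product of the same bound applied to the difference of two solutions. Throughout, I replace $\phi$ by an equivalent smooth weight and (this is automatic for the weights actually occurring here) assume the pointwise inequality $|\phi'(x)|\le C\mu\phi(x)$, $\mu$ being the small exponential growth rate, cf. \eqref{1.thetaeb}; all constants below will depend only on $\mu$ and the constant $C$ in \eqref{1.wexp}, which yields the required uniformity in $\phi$. The first step is the weighted energy identity: multiplying \eqref{1.NStokes} by $\phi^2u$, integrating over $\Omega$ and using $u(t)\in\Cal V_\phi$ (so that $\divv u=0$, $u\big|_{\partial\Omega}=0$, and all boundary terms drop out) gives
\[
\frac12\frac d{dt}\|u\|^2_{L^2_\phi}+\|\Nx u\|^2_{L^2_\phi}=-\int_\Omega\phi\phi'\,\d_{x_1}|u|^2\,dx+2\int_\Omega\phi\phi'\,p\,u_1\,dx+(g,\phi^2u),
\]
where the first term on the right is the commutator $[\Dx,\phi^2]$ and the second stems from $\divv(\phi^2u)=2\phi\phi'u_1\ne0$. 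By $|\phi'|\le C\mu\phi$, these two terms are bounded by $C\mu\|\Nx u\|_{L^2_\phi}\|u\|_{L^2_\phi}$ and $C\mu\|p\|_{L^2_\phi}\|u\|_{L^2_\phi}$ respectively.

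It remains to control $\|p\|_{L^2_\phi}$ and to close the estimate. Applying the Leray projector to \eqref{1.NStokes} gives $\Nx p=(I-\Pi)(g+\Dx u)$, and hence, by the boundedness of $\Pi$ in weighted Sobolev spaces (Proposition \ref{Prop1.wLStokes}) together with the weighted analogue of the Ne\v{c}as inequality on the strip recovering the pressure from its gradient (see \cite{babin,ZelikGlasgow}), one gets $\|p\|_{L^2_\phi}\le C(\|g\|_{L^2_\phi}+\|\Nx u\|_{L^2_\phi})$ uniformly in $\phi$. Since $\phi=\phi(x_1)$, Poincar\'e's inequality in the $x_2$-variable gives $\|u\|_{L^2_\phi}\le C\|\Nx u\|_{L^2_\phi}$ with $C$ an absolute constant. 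Inserting both into the energy identity and taking $\mu$ small enough to absorb the $O(\mu)$-terms into $\|\Nx u\|^2_{L^2_\phi}$, one arrives at
\[
\frac d{dt}\|u\|^2_{L^2_\phi}+\alpha\|u\|^2_{L^2_\phi}\le C\|g\|^2_{L^2_\phi}
\]
for some $\alpha>0$ uniform in $\phi$; Gronwall's lemma then yields the $\|u(t)\|^2_{L^2_\phi}$-part of \eqref{1.nonS}.

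For the strong norms I would test \eqref{1.NStokes} with $\phi^2\Dt u$: the left-hand side becomes $\|\Dt u\|^2_{L^2_\phi}+\frac12\frac d{dt}\|\Nx u\|^2_{L^2_\phi}$, while on the right the same commutator and pressure terms reappear, now of size $O(\mu)$ against $\|\Dt u\|_{L^2_\phi}$ (with $\|p\|_{L^2_\phi}$ already controlled as above), together with $(g,\phi^2\Dt u)$; absorbing and integrating over the last unit time-interval, with the usual time-cutoff/parabolic-smoothing argument and the already-proved bound on $\int\|\Nx u\|^2_{L^2_\phi}$, controls $\|\Dt u\|_{L^2(\max\{0,t-1\},t;L^2_\phi)}$ (and $\|\Nx u(t)\|_{L^2_\phi}$) by the right-hand side of \eqref{1.nonS}. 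Then, at a.e.\ $t$, the pair $(u(t),p(t))$ solves the \emph{stationary} Stokes problem with data $g-\Dt u\in L^2_\phi$, so Proposition \ref{Prop1.wLStokes} gives $\|\Dx u(t)\|_{L^2_\phi}\le C\|g-\Dt u\|_{L^2_\phi}$, upgrading the $\Dt u$-bound to the $\Dx u$-bound. Existence then follows by exhausting $\Omega$ with the bounded cylinders $(-N,N)\times(-1,1)$, solving there by the classical theory and passing to a weak limit using the above bounds, which are uniform in $N$; uniqueness follows from the a priori estimate applied to the difference of two solutions, which solves the homogeneous problem with zero initial data.

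The step I expect to be most delicate is the weighted pressure bound $\|p\|_{L^2_\phi}\le C(\|\Nx u\|_{L^2_\phi}+\|g\|_{L^2_\phi})$ with a constant independent of $\phi$ — the weighted Ne\v{c}as/Helmholtz estimate on the strip — and, to a lesser extent, the bookkeeping in the approximation that keeps all constants from degenerating as $N\to\infty$ or as $\mu\to0$; the remaining manipulations are the standard parabolic energy method, with the smallness of $\mu$ precisely what renders the commutator and pressure terms harmless. One may alternatively base the whole argument on the analytic semigroup generated by the weighted Stokes operator (Proposition \ref{Prop1.wLStokes}) together with maximal $L^2$-regularity and the spectral gap coming from Poincar\'e's inequality, but the energy approach above stays closer to the methods used elsewhere in this paper.
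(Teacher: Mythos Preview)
The paper does not give its own proof of Proposition~\ref{Prop1.strongw}; it simply refers to \cite{ZelikGlasgow} and \cite{babin}. So there is no ``paper's proof'' to compare against, and your sketch has to stand on its own.

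Your instinct that the pressure term is the delicate point is exactly right, but the specific bound you invoke,
\[
\|p\|_{L^2_\phi}\le C\bigl(\|g\|_{L^2_\phi}+\|\Nx u\|_{L^2_\phi}\bigr),
\]
is not justified by the argument you give and is most likely false. You write $\Nx p=(I-\Pi)(g+\Dx u)$ and then appeal to boundedness of $\Pi$ plus a weighted Ne\v{c}as inequality. The problem is that Proposition~\ref{Prop1.wLStokes} asserts boundedness of $\Pi$ on $W^{l,p}_\phi$ only for $l\ge0$; since $\Pi$ does \emph{not} preserve Dirichlet boundary conditions (this is exactly the point of Remark~\ref{Rem1.dif}), it is \emph{not} bounded on $W^{-1,2}_\phi=(W^{1,2}_{0,\phi^{-1}})^*$ by duality, so you cannot pass from $\|\Dx u\|_{W^{-1,2}_\phi}\le C\|\Nx u\|_{L^2_\phi}$ to a useful bound on $(I-\Pi)\Dx u$. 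Concretely, $(I-\Pi)\Dx u=\Nx q$ with $q$ harmonic and Neumann data $\partial_n q=\partial_{x_2}^2 u_2\big|_{\partial\Omega}$, which for $u\in\Cal V_\phi$ is controlled only by $\|u\|_{W^{2,2}_\phi}$, not by $\|\Nx u\|_{L^2_\phi}$. This obstruction is precisely why the paper, for the weak energy theory of Section~\ref{s2}, introduces the corrector $v_\phi$ instead of bounding the pressure directly.

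In the strong-solution setting the gap is repairable, but you have to reorder your argument. Do the $H^1$-level estimate first (test with $\phi^2\Dt u$): there the pressure term is $O(\mu)\|p-\Bbb Sp\|_{L^2_\phi}\|\Dt u\|_{L^2_\phi}$, and at \emph{this} regularity level the correct bound $\|p-\Bbb Sp\|_{L^2_\phi}\le C(\|g\|_{L^2_\phi}+\|\Dt u\|_{L^2_\phi})$ does follow, from $\Nx p=g-\Dt u+\Dx u$ together with the stationary weighted Stokes estimate $\|u\|_{W^{2,2}_\phi}\le C\|g-\Dt u\|_{L^2_\phi}$ of Proposition~\ref{Prop1.wLStokes}. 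That closes and gives control of $\int_{t-1}^t\|\Dt u\|^2_{L^2_\phi}$ and of $\|\Nx u(t)\|_{L^2_\phi}$. Only afterwards run the $L^2$-energy identity, in which the pressure contribution $C\mu\|p-\Bbb Sp\|_{L^2_\phi}\|u\|_{L^2_\phi}\le C\mu(\|g\|_{L^2_\phi}+\|\Dt u\|_{L^2_\phi})\|u\|_{L^2_\phi}$ is now harmless once added to the already-established $H^1$-estimate (and note that the $\Bbb Sp$ part drops out against $u_1$ since $\Bbb Su_1=0$). Alternatively---and this is closer to what the cited references actually do---set $u_\phi:=\phi u$, treat the resulting commutators and the nonzero divergence $\divv u_\phi=\phi'u_1$ as $O(\mu)$-perturbations of the unweighted Stokes problem, and use maximal $L^2$-regularity for the latter; you mention this route yourself at the end, and it avoids the pressure issue entirely.
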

The proof of this result also can be found in \cite{ZelikGlasgow} or \cite{babin}.
\begin{remark}\label{Rem1.dif} Most results of this section will be used in the sequel with the weights $\phi=\theta_{\eb,x_0}(x_1)$ only. However, we will need to control the dependence of all the constants on the parameter $\eb\to0$ and, by this reason, it is important for us that the constants in the above propositions are "independent of the concrete choice of the weight" and depend only on the constants in \eqref{1.wexp} (which are uniform with respect to $\eb\to0$ and $x_0\in\R$).
\par
Another straightforward observation which will be essentially used in the next section is that, according to Propositions \ref{Prop1.fracS} and \ref{Prop1.wLStokes} and formula \eqref{1.fracL}, the Leray projector $\Pi$ maps $[W^{2\kappa,2}(\Omega)]^2$ to $D(A^\kappa)$:
\begin{equation}\label{1.kappaPi}
\Pi:[W^{2\kappa,2}(\Omega)]^2\to D(A^\kappa)
\end{equation}
for $\kappa<1/4$ and that is not true for $\kappa\ge1/4$ due to the loss of zero boundary conditions (we recall that, in general, $\Pi u\big|_{\partial\Omega}\ne0$ even if $u\big|_{\partial\Omega}=0$).
\par
Finally, it worth to mention that the dual space $\Cal V^*$ is {\it not} a subspace of distributions $[D'(\Omega)]^2$ and the fact that  the divergence free vector field $v\in\Cal V^*$ {\it does not} imply in general that its components $v_1$ or $v_2$ belong to $H^{-1}(\Omega)$. Indeed, we may add any gradient of a harmonic function to the vector field $v$ without changing the functional $v\in\Cal V^*$ and this harmonic function may be not in $L^2(\Omega)$, say, due to the singularities near the boundary. As will be explained in the next section, this leads to essential difficulties in developing the weighted energy theory for the non-stationary Stokes equations.
\end{remark}

\section{The linear non-stationary Stokes equation: weighted energy theory}\label{s2}
The aim of this section is derive the so-called weighted energy equality for the  linear non-autonomous Stokes problem \eqref{1.NStokes} under the assumptions that
\begin{equation}\label{2.gas}
g\in L^{4/3}_b(\R_+\times\Omega)\cap L^1_b(\R_+,L^{3/2}_b(\Omega)),\ \ u_0\in\Cal H_b
\end{equation}
which will be used in the sequel for the study of the nonlinear Navier-Stokes equation.

\begin{definition}\label{Def2.en} A function $u(t,x)$ is a weak  (energy) solution of \eqref{1.NStokes} if
\begin{equation}\label{2.defspace}
u\in L^\infty(0,T;\Cal H_b)\cap C(0,T;\Cal H_\phi),\ \ \Nx u\in L^2_b([0,T]\times\Omega),
\end{equation}
where $\phi$ is any weight of exponential growth rate such that $\phi\in L^2(\R)$ and $u$ solves \eqref{1.NStokes} in the sense of distributions, namely, for any $\varphi\in C^\infty_0([0,T]\times\Omega)$ satisfying $\divv \varphi=0$,
$$
-\int_{\R_+}(u,\Dt\varphi)\,dt -\int_{\R_+}(u,\Dx\varphi)\,dt=\int_{\R_+}(g,\varphi)\,dt.
$$
Here and below $(u,v)$ stands for the standard inner product in $[L^2(\Omega)]^2$.
\end{definition}
To derive the weighted energy equality for problem \eqref{1.NStokes}, it would be natural to multiply the equation by $\phi^2 u$ for some properly chosen weight $\phi$ of exponential growth rate. However, this does not work in a straightforward way since
$$
\divv (\phi^2 u)=2\phi\phi' u_1\ne0.
$$
To overcome this difficulty, we introduce (following \cite{ZelikGlasgow}) the corrector $v_\phi$ as a solution of the following auxiliary problem:
\begin{equation}\label{2.au}
-\Dt v_\phi-\Dx v_\phi+\Nx q=0,\ \ \divv v_\phi=2\phi\phi' u_1,\ \ v_\phi\big|_{\partial\Omega}=0,\ \ v_\phi\big|_{t=T}=(0,2\phi\phi'\Psi(u(T))),
\end{equation}
where $T>0$ is a parameter and $\psi=\Psi(u)$ is a stream function of the vector field $u$. Then, $\divv(\phi^2v-v_\phi)=0$ and we may at least formally multiply equation \eqref{1.NStokes} by $\phi^2u-v_\phi$ without taking a special care on the pressure term $\Nx p$. Note also that
since \eqref{2.au} is the adjoint equation to \eqref{1.NStokes}, we need to solve it backward in time (for $t\le T$) and
 the unusual initial data at $t=T$ is chosen in order to satisfy the necessary compatibility condition
$$
\divv u\big|_{t=T}=\divv u(T)=2\phi\phi'u_1(T).
$$
Then, multiplying formally equation \eqref{1.NStokes} by $\phi^2u-v_\phi$ and integrating over $x$, we have
\begin{multline}\label{2.formal}
(\Dt u-\Dx u-\Nx p,\phi^2u-v_\phi)=\frac d{dt}(\frac12\|u\|^2_{L^2_\phi}-(u,v_\phi))+(u,-\Dt v_\phi-\Dx v_\phi)+\\+(\Nx u,\Nx(\phi^2u))=\frac d{dt}(\frac12\|u\|^2_{L^2_\phi}-(u,v_\phi))-(u,\Nx q)+(\Nx u,\Nx(\phi^2 u))=\\=\frac d{dt}(\frac12\|u\|^2_{L^2_\phi}-(u,v_\phi))+(\Nx u,\Nx(\phi^2 u)),
\end{multline}
where we have used that $\divv u=\divv(\phi^2u-v_\phi)=0$. Thus, we formally end up with the key weighted energy identity
\begin{equation}\label{2.energy}
\frac d{dt}\(\frac12\|u\|^2_{L^2_\phi}-(u,v_\phi)\)+(\Nx u,\Nx(\phi^2u))=(g,\phi^2u-v_\phi).
\end{equation}
The main aim of this section is to justify the weighted energy equality \eqref{2.energy}. To this end, we first need to study the solutions of the auxiliary problem \eqref{2.au}. For simplicity, we switch back to forward in time solutions by the change of time variable $t\to T-t$ and consider slightly more general problem
\begin{equation}\label{2.aug}
\Dt v-\Dx v+\Nx q=0,\ \ \divv v=\partial_{x_2}h(t),\ \ v\big|_{\partial\Omega}=0,\ \ v\big|_{t=0}=(0,h(0))^t,
\end{equation}
where the function $h\in C(0,T;W^{1,2}_\phi(\Omega))$ for some weight $\phi$ of exponential growth rate which satisfies the additional assumption
\begin{equation}\label{2.w}
|\phi'(x)|+|\phi''(x)|+|\phi'''(x)|\le C\eb\phi(x),
\end{equation}
and the parameter $\eb>0$ is small enough. Then, the choice
\begin{equation}\label{2.relation}
h(t)=2\phi\phi' \Psi(u_1(t))
\end{equation}
corresponds to the considered auxiliary problem \eqref{2.au}. The next theorem which gives the estimate for the solution $v$ in terms of the function $h$ is crucial for what follows.
\begin{theorem}\label{Th2.au} Let $\phi$ be a weight of exponential growth rate which satisfies \eqref{2.au} for sufficiently small $\eb\le\eb_0$ and let $h\in C(0,T;W^{1,2}_\phi(\Omega))$. Then, there exists a unique solution $v(t)$ of problem \eqref{2.au} such that
\begin{equation}\label{2.strange}
v\in C(0,T;W^{1/3,2}(\Omega)),\ \int_0^tv(t)\,dt\in L^2(0,T;W^{2+1/3,2}_\phi(\Omega)),\ \ \Bbb Sv_1\equiv0
\end{equation}
and the following estimate holds:
\begin{equation}\label{2.mainest}
\|v\|_{C(0,T;W^{1/3,2}_\phi(\Omega))}\le C\|h\|_{C(0,T;W^{1,2}_\phi(\Omega))},
\end{equation}
where the constant $C$ is independent of $h$, $T$, $\eb$ and the choice of the weight $\phi$.
\end{theorem}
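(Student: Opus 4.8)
The plan is to abandon weighted energy estimates altogether and instead produce $v$ by an explicit Duhamel formula built from the analytic Stokes semigroup. First I would remove the inhomogeneous divergence: in the situation of interest $h=2\phi\phi'\Psi(u_1)$ vanishes on $\partial\Omega$ (the stream function does), so $\bar v:=(0,h)^t$ satisfies $\divv\bar v=\partial_{x_2}h$, $\bar v|_{\partial\Omega}=0$ and $\bar v|_{t=0}=(0,h(0))^t$; hence $w:=v-\bar v$ should be divergence free, vanish at $t=0$ and on $\partial\Omega$, and solve the Stokes system with forcing $G:=(0,\Dx h-\Dt h)^t$. Write $A_\phi$ for the Stokes operator on $\Cal H_\phi$; by Propositions \ref{Prop1.wLStokes} and \ref{Prop1.strongw} it generates an analytic semigroup $e^{-A_\phi t}$ which, uniformly over the admissible weights, obeys the smoothing and decay bounds $\|A_\phi^\beta e^{-A_\phi t}\|_{\Cal H_\phi\to\Cal H_\phi}\le Ct^{-\beta}e^{-\alpha t}$ with $\alpha>0$. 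The candidate is then $w(t)=\int_0^t e^{-A_\phi(t-s)}\Pi G(s)\,ds$ and $v:=\bar v+w$.

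The forcing $G$ is too rough to insert directly, so the next step regularises it in two ways. Since $\Dt h$ is not available, I integrate by parts in time: $\int_0^t e^{-A_\phi(t-s)}\Dt\Pi(0,h(s))\,ds=\Pi(0,h(t))-e^{-A_\phi t}\Pi(0,h(0))-\int_0^t A_\phi e^{-A_\phi(t-s)}\Pi(0,h(s))\,ds$, which is legitimate by approximating $h$ in $C(0,T;W^{1,2}_\phi)$ by functions smooth in time, and the resulting formula uses only pointwise values of $h$. The term $\Dx h$ is handled by duality: $(0,\Dx h)\in[W^{-1,2}_\phi]^2$ with norm controlled by $\|h\|_{W^{1,2}_\phi}$, the Leray projector extends to $[W^{-1,2}_\phi]^2$ (dual to Proposition \ref{Prop1.wLStokes}), and its restriction to $\Cal V_\phi$ lies in $\Cal V_\phi^*=D(A_\phi^{-1/2})$, so that $A_\phi^{-1/2}\Pi(0,\Dx h(s))\in\Cal H_\phi$ with norm $\le C\|h(s)\|_{W^{1,2}_\phi}$. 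It is essential here NOT to rewrite $\Pi(0,\Dx h)$ as $-A_\phi\Pi(0,h)$: these differ by a boundary term because $\Pi(0,h)|_{\partial\Omega}\ne0$, which is precisely the defect in \cite{ZelikGlasgow}. After these manipulations $v$ is expressed entirely through $h$, with solenoidal part $\Pi v=e^{-A_\phi t}\Pi(0,h(0))+\int_0^t e^{-A_\phi(t-s)}\Pi(0,\Dx h(s))\,ds+\int_0^t A_\phi e^{-A_\phi(t-s)}\Pi(0,h(s))\,ds$ and potential part $(I-\Pi)v=(I-\Pi)(0,h(t))\in W^{1,2}_\phi$.

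Now I would estimate $v$ in $C(0,T;W^{1/3,2}_\phi)$, which by Proposition \ref{Prop1.fracS} reduces, for the solenoidal part, to bounding $A_\phi^{1/6}\Pi v$ in $\Cal H_\phi$ (note $1/6<1/4$, so no boundary condition is lost in this identification, and $W^{1/3,2}\hookrightarrow L^3$ in two dimensions, which is what the nonlinear argument needs later). For the $\Dx h$ contribution, $\|A_\phi^{1/6}e^{-A_\phi(t-s)}\Pi(0,\Dx h(s))\|=\|A_\phi^{2/3}e^{-A_\phi(t-s)}(A_\phi^{-1/2}\Pi(0,\Dx h(s)))\|\le C(t-s)^{-2/3}e^{-\alpha(t-s)}\|h(s)\|_{W^{1,2}_\phi}$, integrable in $s$ uniformly in $t$. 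For the remaining convolution I use that, by \eqref{1.fracL} and \eqref{1.kappaPi}, $\Pi(0,h(s))\in D(A_\phi^\kappa)$ for every $\kappa\in(1/6,1/4)$ with norm $\le C\|h(s)\|_{W^{1,2}_\phi}$; then $\|A_\phi^{1/6}A_\phi e^{-A_\phi(t-s)}\Pi(0,h(s))\|=\|A_\phi^{7/6-\kappa}e^{-A_\phi(t-s)}(A_\phi^{\kappa}\Pi(0,h(s)))\|\le C(t-s)^{-(7/6-\kappa)}e^{-\alpha(t-s)}\|h(s)\|_{W^{1,2}_\phi}$, integrable exactly because $7/6-\kappa<1$. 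The terms $(I-\Pi)(0,h(t))$, $\Pi(0,h(t))$ and $e^{-A_\phi t}\Pi(0,h(0))$ are bounded directly by Propositions \ref{Prop1.wLStokes} and \ref{Prop1.fracS}. Summing and adding $\bar v$ yields \eqref{2.mainest} with $C$ independent of $h$, $T$, $\eb$ and $\phi$. One then checks that $v=\bar v+w$ solves \eqref{2.aug} in the appropriate weak sense; that $\Bbb Sv_1\equiv0$ because $\bar v_1=0$ and $w(t)\in\Cal H_\phi$; that uniqueness holds since the difference of two solutions solves the homogeneous weighted Stokes problem with zero data (Proposition \ref{Prop1.strongw}); and that the regularity of $\int_0^t v(s)\,ds$ claimed in \eqref{2.strange} follows by integrating \eqref{2.aug} in time — this kills the time derivative and turns the equation into a stationary weighted Stokes problem — and invoking the elliptic regularity of Propositions \ref{Prop1.wLStokes} and \ref{Prop1.stokes}.

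The main obstacle is the loss of the Dirichlet boundary condition under the Leray projector, which is the very point that broke the original proof. Because $\Pi(0,h)$ does not vanish on $\partial\Omega$, one cannot rewrite $\Pi(0,\Dx h)$ as $-A_\phi\Pi(0,h)$ (the discrepancy is an uncontrollable boundary term) and one cannot push $\Pi(0,h)$ beyond $D(A_\phi^{\kappa})$ with $\kappa<1/4$. This forces the whole construction to live strictly below the trace threshold, and the admissible window $1/6<\kappa<1/4$ for the time‑integrated Duhamel term is genuinely tight — there is no slack, which is why only $W^{1/3,2}$ (rather than the naively expected $W^{1,2}$) can be reached. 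A secondary, more bookkeeping‑heavy difficulty is to keep every constant independent of the weight $\phi$, of $\eb\le\eb_0$ and of $T$: the $\phi$‑ and $\eb$‑uniformity comes from the uniform‑in‑weight statements of Propositions \ref{Prop1.wequiv}--\ref{Prop1.strongw} together with the fact that every commutator of $A_\phi$, $\Pi$ or a weight‑multiplication operator carries a factor $\phi',\phi'',\phi'''=O(\eb\phi)$ by \eqref{2.w} and is therefore absorbed for $\eb$ small, while the $T$‑uniformity rests on the exponential decay $e^{-\alpha t}$ of the weighted Stokes semigroup, which makes all the time convolutions $\int_0^t(t-s)^{-\beta}e^{-\alpha(t-s)}\,ds$ bounded uniformly in $t$.
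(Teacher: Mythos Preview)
Your strategy shares the paper's central insight --- use the analytic Stokes semigroup together with the fact that $\Pi$ maps into $D(A^\kappa)$ only for $\kappa<1/4$ --- and your diagnosis of the boundary--condition obstruction is exactly right. But the organization is genuinely different from the paper's, and the difference matters.

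The paper does not work with a weighted Stokes operator $A_\phi$ at all. Instead it (i) removes the inhomogeneous divergence and the stationary part, (ii) kills $\partial_t h$ by passing to $V(t)=\int_0^t e^{-(t-s)}\tilde v(s)\,ds$, which solves a clean Stokes problem with right--hand side $H\in C(0,T;W^{1,2}_\phi)$, (iii) proves the estimate in the \emph{non--weighted} case $\phi\equiv1$ via the self--adjoint $A$ and Proposition~\ref{Prop1.fracS}, and only then (iv) transfers to the weighted case by multiplying the equation by $\phi$ and absorbing all commutator terms, which are $O(\eb)$ thanks to \eqref{2.w}. So the fractional--power machinery is only ever invoked for the ordinary (self--adjoint) Stokes operator, where Proposition~\ref{Prop1.fracS} and \eqref{1.kappaPi} apply verbatim.

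Your route instead postulates an analytic semigroup $e^{-A_\phi t}$ on $\Cal H_\phi$, a description of $D(A_\phi^\kappa)$ as $[W^{2\kappa,2}_\phi]^2\cap\Cal H_\phi$ for $\kappa<1/4$, the identification $\Cal V_\phi^*=D(A_\phi^{-1/2})$, and uniform--in--$\phi$ smoothing bounds $\|A_\phi^\beta e^{-A_\phi t}\|\le Ct^{-\beta}e^{-\alpha t}$. None of these is provided by Propositions~\ref{Prop1.fracS}--\ref{Prop1.strongw}: Proposition~\ref{Prop1.fracS} is stated for the non--weighted $A$ only, and Proposition~\ref{Prop1.strongw} is an $L^2$--in--time energy estimate, not a sectoriality/resolvent bound. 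These weighted statements are plausible and can presumably be obtained, but the natural way to prove them is precisely the perturbation argument of the paper's Step~3 (conjugate by $M_\phi:u\mapsto\phi u$, note that $M_\phi$ does \emph{not} preserve the divergence--free condition, correct by a small solenoidal projector, and absorb the $O(\eb)$ remainders). In other words, the work you are hoping to avoid by going ``directly weighted'' reappears as a lemma you would have to prove first; without it, the steps ``$A_\phi^{-1/2}\Pi(0,\Dx h)\in\Cal H_\phi$'' and ``$\Pi(0,h)\in D(A_\phi^\kappa)$ with norm $\le C\|h\|_{W^{1,2}_\phi}$'' are assertions, not consequences of the stated preliminaries.

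If you supply that lemma your argument goes through, and your integration--by--parts handling of $\partial_t h$ is a perfectly good alternative to the paper's time--averaged variable $V$. But as written, the proof has a gap at exactly the place where the original \cite{ZelikGlasgow} argument went wrong: you need concrete weighted control at the fractional level, and the paper earns it by reducing to the unweighted case rather than by quoting a weighted analogue of Proposition~\ref{Prop1.fracS}.
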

\begin{remark} \label{Rem2.strange} The regularity assumptions \eqref{2.strange} are a bit unusual. Indeed, it would be more natural to expect
similar to Proposition \ref{Prop1.strongw} that $\Dt v,\Dx v\in L^2(0,T;L^2_\varphi)$, but this regularity obviously requires that
$$
\Dt h=\divv\Dt v\in L^2(0,T;H^{-1}_\phi(\Omega)).
$$
However, we do not control the $H^{-1}$-norm of $\Dt h$ since according to \eqref{2.relation}, we need to control the appropriate norm of $\Dt u_1$, where $u$ is an energy solution of \eqref{1.NStokes}. But, from equation \eqref{1.NStokes}, we know only that
$$
\Dt u\in L^2(0,T;\Cal H_\phi^{-1})+L^{4/3}(0,T;L^{4/3}_\phi(\Omega))
$$
and, as explained in Remark \ref{Rem1.dif}, this is not enough to control the reasonable norms of $\Dt u_1$. Thus, in order to avoid the assumptions on $\Dt h$, we have to use only the {\it partial} regularity, say, of the form \eqref{2.strange}. The exponent $1/3$ in \eqref{2.strange} can be replaced by $1/2-\kappa$ for every $\kappa>0$ (which is not essential for our purposes), but our method does not allow to take this exponent larger since  it utilizes property \eqref{1.kappaPi}. Finally, the condition on the integral $\int_0^t v(t)\,dt$ is added in order to be able to pose the boundary conditions.
\end{remark}
\begin{proof}[Proof of the theorem] We split the proof into several steps.
\par
{\it Step 1.} At this step we make several equivalent transformations and reduce problem \eqref{2.aug} to more standard one. First, introducing the new variable $\bar v(t):=v(t)-w(t)$, where $w(t):=(0,h(t))^t$, we obtain the divergence free problem:
\begin{equation}\label{2.aug1}
\Dt\bar v-\Dx\bar v-\Nx q=-\Dt w(t)-\Dx w(t),\ \ \divv \bar v=0,\ \ \bar v\big|_{t=0}=0.
\end{equation}
Since, obviously
$$
\|w\|_{C(0,T;W^{1,2}_\phi)}\le\|h\|_{C(0,T;W^{1,2}_\phi)},
$$
it is enough to verify estimate \eqref{2.mainest} for function $\bar v$ only. Second, we introduce the function $\tilde w(t)$ as a solution of the linear Stokes problem
$$
\Dx\tilde w(t)-\Nx q=\Dx w(t),\ \ \divv \tilde w=0,\ \ w\big|_{\partial\Omega}=0.
$$
Then, due to Proposition \ref{Prop1.stokes},
\begin{equation}\label{2.wtilde}
\|\tilde w\|_{C(0,T;\Cal V_\phi)}\le C\|\Dx w\|_{C(0,T;W^{-1,2}_\phi)}\le C_1\|h\|_{C(0,T;W^{1,2}_\phi)}
\end{equation}
and, introducing $\tilde v:=\bar v-\tilde w$, we get
\begin{equation}\label{2.aug2}
\Dt\tilde v-\Dx\tilde v-\Nx \tilde q=-\Dt(w(t)+\tilde w(t)),\ \divv\tilde v=0,\ \ \tilde v\big|_{t=0}=-\tilde w(0).
\end{equation}
Then, due to \eqref{2.wtilde}, it is enough to prove \eqref{2.mainest} for the function $\tilde v$ only. Finally, we want to get rid of the time derivative in the right-hand side of \eqref{2.aug2}. To this end, we introduce the function $V(t)$ via
$$
V(t):=\int_0^te^{-(t-s)}\tilde v(s)\,ds.
$$
Then,
$$
\Dt V(t)+V(t)=\tilde v(t)
$$
and integrating \eqref{2.aug2} in time, we arrive at
\begin{equation}\label{2.aug3}
\Dt V-\Dx V-\Nx Q=H(t),\ \ \divv V=0,\ \ V\big|_{t=0}=0,
\end{equation}
where $H(t):=\tilde w(0)+w(0)e^{-t}-\tilde w(t)-w(t)+\int_0^te^{-(t-s)}\tilde w(s)+w(s)\,ds$. Then, due to \eqref{2.wtilde},
\begin{equation}\label{2.H}
\|H\|_{C(0,T;W^{1,2}_\phi)}\le C\|h\|_{C(0,T;W^{1,2}_\phi)},
\end{equation}
where the constant $C$ is independent of $T$, and, to prove estimate \eqref{2.mainest}, it is sufficient to verify that the solutions of the linear Stokes problem \eqref{2.aug3} satisfy the following estimate:
\begin{equation}\label{2.mainest1}
\|\Dt V\|_{C(0,T;W^{1/3,2}_\phi)}+\|V\|_{C(0,T;W^{2+1/3,2}_\phi)}\le C\|H\|_{C(0,T;W^{1,2}_\phi)}
\end{equation}
for some constant $C$ independent of $T$ and the concrete choice of the weight $\phi$.
\par
{\it Step 2.} At this stage, we verify estimate \eqref{2.mainest1} for the particular case $\phi=1$ (the non-weighted case).
Namely, assuming that $H\in C(0,T;W^{1,2})$, we want to show that
\begin{equation}\label{2.mainest2}
\|\Dt V\|_{C(0,T;W^{1/3,2})}+\|V\|_{C(0,T;W^{2+1/3,2})}+\|\bar Q\|_{C(0,T;W^{1+1/3,2})}\le C\|H\|_{C(0,T;W^{1,2})}
\end{equation}
for some constant $C$ independent of $T$. Here and below $\bar Q:=Q-\Bbb S Q$.
\par
To verify \eqref{2.mainest2}, we apply the Leray projector $\Pi$ to both sides of equation \eqref{2.aug3} which gives
\begin{equation}\label{2.aug4}
\Dt V+AV=\Pi H(t),\ V\big|_{t=0}=0
\end{equation}
and, by the variation of constants formula, the solution $V$ can be written as follows:
$$
V(t)=\int_0^te^{-A(t-s)}\Pi H(s)\,ds.
$$
Then, since the Stokes operator $A$ generates an analytic semigroup (recall that it is self adjoint and positive definite in $\Cal H$), we have the estimate
$$
\|e^{At}v\|_{D(A^{\alpha+\kappa})}\le C t^{-\kappa}e^{-\alpha t}\|v\|_{D(A^\alpha)}
$$
for all $\alpha\in\R$ and $\kappa>0$, see e.g., \cite{Henry}. Then, elementary estimates give
$$
\|V\|_{C(0,T;D(A^{\alpha+1-\delta}))}\le C_\delta\|\Pi H\|_{C(0,T;D(A^{\alpha}))},
$$
where $\alpha\in\R$, $\delta>0$ and $C_\delta$ is independent of $T$. Fixing $\alpha=1/5$, $\delta=1/30$ and using the description of the fractional powers of the Stokes operator given in Section \ref{s1} as well as \eqref{1.kappaPi}, we have
$$
\|AV\|_{C(0,T;W^{1/3,2})}\le C\|\Pi H\|_{C(0,T;D(A^{1/5}))}\le C_1\|H\|_{C(0,T;W^{2/5,2})}\le C_2\|H\|_{C(0,T;W^{1,2})}.
$$
Using the maximal regularity of the Stokes operator, see Proposition \ref{Prop1.wLStokes}, and expressing $\Dt V$ from equation \eqref{2.aug4}, we get
$$
\|\Dt V\|_{C(0,T;W^{1/3,2})}+\|V\|_{C(0,T;W^{2+1/3,2})}\le C\|H\|_{C(0,T;W^{1,2})}.
$$
After that, from equation \eqref{2.aug3}, we obtain the control of the $C(0,T;W^{1/3,2})$-norm of $\Nx Q$. The Poincare inequality gives then the desired control of $\bar Q$ and proves estimate \eqref{2.mainest2}.
\par
{\it Step 3.} At this step, we deduce the weighted estimate \eqref{2.mainest1} from the non-weighted one \eqref{2.mainest2} and, thus, finish the proof of the desired estimate \eqref{2.mainest}. To this end, we introduce the function $V_\phi:=\phi V$. Then, due to assumptions \eqref{2.w},
for sufficiently small $\eb>0$, we have
\begin{equation}\label{2.equiv}
C_2\|V_\phi\|_{W^{s,2}}\le\|V\|_{W^{s,2}_\phi}\le C_1\|V_\phi\|_{W^{s,2}}
\end{equation}
for some $C_1$ and $C_2$ which are independent of $\eb$ and $s\in(0,3]$, see \cite{ZelikGlasgow}. By this reason, to prove \eqref{2.mainest1} it is sufficient to establish the analogous non-weighted estimates for function $V_\phi$. Multiplying equation \eqref{2.aug3} by $\phi$, after the elementary transformations, we have
\begin{equation}\label{2.aug41}
\Dt V_\phi-\Dx V_\phi-\phi\Nx Q=\phi H+\eb (M(x)\partial_{x_1} V_\phi+N(x) V_\phi),\ \ \divv V_\phi=\phi'V_1,\ \ V_\phi|_{t=0}=0,
\end{equation}
where $M(x):=-2\eb^{-1}\phi'\phi^{-1}$ and $N(x):=2\eb^{-1}(\phi'\phi^{-1})^2-\eb^{-1}\phi''\phi^{-1}$. Note that, due to assumptions \eqref{2.w} on the weight $\phi$,
$$
\|M(x)\|_{L^\infty}+\|N(x)\|_{L^\infty}\le C
$$
uniformly with respect to $\eb\to0$, so the last term in the right-hand side of \eqref{2.aug41} is indeed a small perturbation. To transform the term with pressure, we introduce the function
$$
Q_\phi:=\phi Q-\int_0^{x_1}\phi'(s)Q(s,x_2)\,ds
$$
Then, using that
$$
\bar Q_\phi:=Q_\phi-\Bbb S Q_\phi=\phi\bar Q,\ \ \phi\Nx Q=\Nx Q_\phi-\phi'\phi^{-1}(1,0)^t\bar Q_\phi,
$$
we transform \eqref{2.aug41} to
\begin{equation}\label{2.aug5}
\Dt V_\phi-\Dx V_\phi-\Nx Q_\phi=H_\phi,\ \ \divv V_\phi=\phi'V_1,\ \ V_\phi\big|_{t=0}=0,
\end{equation}
where $H_\phi:=\phi H+\eb (M(x)\partial_{x_1} V_\phi+N(x) V_\phi+\phi'\phi^{-1}(1,0)^t\bar Q_\phi$. Moreover, due to \eqref{2.w},
\begin{equation}\label{2.Hest}
\|H_\phi\|_{W^{1,2}}\le \|\phi H\|_{W^{1,2}}+C\eb(\|V_\phi\|_{W^{2,2}}+\|\bar Q_\phi\|_{W^{1,2}}),
\end{equation}
where the constant $C$ is independent of $\eb$. We want to apply estimate \eqref{2.mainest2} to this equation, to this end, similar to Step 1, we need to get rid of the non-zero divergence by introducing the new variables
$$
W_\phi:=(0,1)^t\phi'\Psi(V),\ \ \tilde V_\phi:=V_\phi-W_\phi.
$$
Then, it is not difficult to see
$$
\|W_\phi\|_{W^{s,2}}\le C\eb\|V_\phi\|_{W^{s-1,2}},\ \ s\in[1,3]
$$
and, therefore, for sufficiently small $\eb>0$,
$$
C_2\|\tilde V_\phi\|_{W^{s,2}}\le\|V_\phi\|_{W^{s,2}}\le C_1\|\tilde V_\phi\|_{W^{s,2}},\ \ C_2\|\Dt\tilde V_\phi\|_{W^{s,2}}\le\|\Dt V_\phi\|_{W^{s,2}}\le C_1\|\Dt\tilde V_\phi\|_{W^{s,2}},
$$
where the constants $C_1$ and $C_2$ are independent of $\eb$. On the other hand, function $\tilde V_\phi$ solves
\begin{equation}\label{2.aug6}
\Dt\tilde V_\phi-\Dx \tilde V_\phi-\Nx Q_\phi=H_\phi-\Dt W_\phi+\Dx W_\phi,\ \ \divv \tilde V_\phi=0,\ \ \tilde V_\phi\big|_{t=0}=0
\end{equation}
and, applying estimate \eqref{2.mainest2} to this equation, we get
\begin{multline}
\|\Dt V_\phi\|_{C(0,T;W^{1/3,2})}+\|V_\phi\|_{C(0,T;W^{2+1/3,2})}+\|\bar Q_\phi\|_{C(0,T;W^{1+1/3,2})}\le\\\le C(\|\Dt \tilde V_\phi\|_{C(0,T;W^{1/3,2})}+\|\tilde V_\phi\|_{C(0,T;W^{2+1/3,2})}+\|\bar Q_\phi\|_{C(0,T;W^{1+1/3,2})})\le\\\le C\|H_\phi-\Dt W_\phi+\Dx W_\phi\|_{C(0,T;W^{1,2})}\le C\|\phi H\|_{C(0,T;W^{1,2})}+\\+C\eb(\|\Dt V_\phi\|_{C(0,T;L^2)}+\|V_\phi\|_{C(0,T;W^{2,2})}+\|\bar Q_\phi\|_{C(0,T;W^{1,2})}).
\end{multline}
Thus, for sufficiently small $\eb>0$,
$$
\|\Dt V_\phi\|_{C(0,T;W^{1/3,2})}+\|V_\phi\|_{C(0,T;W^{2+1/3,2})}\le C\|\phi H\|_{C(0,T;W^{1,2})}
$$
which gives the estimate \eqref{2.mainest1} which, in turn, gives the desired estimate \eqref{2.mainest}.
\par
{\it Step 4.} Existence and uniqueness. To construct a solution of \eqref{2.aug}, we approximate the function $h\in C(0,T;W^{1,2}_\phi)$ by smooth functions $h_n$ which are convergent {\it strongly} to $h$ in that space. Let $v_n$ be the solutions of problems \eqref{2.aug} where $h$ is replaced by $h_n$ (the existence and regularity of $v_n$ follows, e.g., from Proposition \ref{Prop1.strongw}; since we do not have the problem with time derivatives for $h_n$, the existence is straightforward). Now, applying estimate \eqref{2.mainest} to the difference $v_n-v_m$, we get
$$
\|v_n-v_m\|_{C(0,T;W^{1/3,2}_\phi)}\le C\|h_n-h_m\|_{C(0,T;W^{1,2}_\phi)}
$$
and, therefore, $v_n$ is a Cauchy sequence in $C(0,T;W^{1/3,2}_\phi)$. Thus, the limit solution of \eqref{2.aug} $v\in C(0,T;W^{1/3,2}_\phi)$ and satisfies indeed estimate \eqref{2.mainest}. The uniform estimate for $\int_0^t v_n(s)\,ds$ can be obtained by integrating \eqref{2.aug} in time and using the maximal regularity \eqref{1.ws} of the Stokes operator. So, the desired solution $v(t)$ is constructed. To prove the uniqueness, assume that $v_1$ and $v_2$ are to solutions of \eqref{2.aug} with the same $h$. Then, the function $v(t):=\int_0^t(v_1(s)-v_2(s))\,ds$ is a strong solution of the Stokes problem \eqref{1.NStokes} with $u_0=g=0$. By Proposition \ref{Prop1.strongw}, $v(t)\equiv0$. Thus, the uniqueness is proved and Theorem \ref{Th2.au} is also proved.
\end{proof}
We now return to the auxiliary problem \eqref{2.au}, where $(u_1,u_2)$ solves the non-autonomous Stokes problem \eqref{1.NStokes} and $\phi$ is a weight of exponential growth rate such that $\phi\in L^2(\R)$ and satisfies \eqref{2.w} for a sufficiently small $\eb>0$. Then, the function $h(t)$ defined via \eqref{2.relation} satisfies
\begin{equation}\label{2.hu}
\|h(t)\|_{W^{1,2}_{\phi^{-1}}}\le C\eb\|\Psi(u(t))\|_{W^{1,2}_\phi}\le C\eb\|u(t)\|_{L^2_\phi}
\end{equation}
and, therefore, applying estimate \eqref{2.mainest} with the weight $\phi^{-1}$ (which also satisfies \eqref{2.w}), we see that
$$
\|v_\phi\|_{C(0,T;W^{1/3,2}_{\phi^{-1}})}\le C\eb\|u\|_{C(0,T;L^2_\phi)}.
$$
Note that, since $u\in L^\infty(0,T;L^2_b)$ and $\phi\in L^2(\R)$, the norm in the right-hand side is finite. Moreover, using the weighted Sobolev
embedding theorem $W^{1/3,2}_{\phi^{-1}}\subset L^3_{\phi^{-1}}$ (where the embedding constant is independent of $\phi$ and $\eb$, see \cite{ZelikGlasgow}), we end up with the estimate
\begin{equation}\label{2.auphi}
\|v_\phi\|_{C(0,T;L^2_{\phi^{-1}})}+\|v_\phi\|_{C(0,T;L^3_{\phi^{-1}})}\le C\eb\|u\|_{C(0,T;L^2_\phi)}.
\end{equation}
The following particular choice of the weight $\phi$ is crucial for what follows.

\begin{corollary}\label{Cor2.aumain} Let $u$ be a weak solution of equation \eqref{1.NStokes} (in the sense of Definition \ref{Def2.en} and let
$\theta_{\eb}(x)=\theta_{\eb,s}$ be the weight defined via \eqref{1.thetaweight} where $\eb>0$ is small enough and $s\in\R$ is a parameter. Then
the solution $v_\theta(t)$ of the auxiliary problem \eqref{2.au} with $\phi=\theta_{\eb,s}$ satisfies the following estimate:
\begin{equation}\label{2.aumain}
\|v_\theta\|_{C(0,T;L^2_{\theta^{-2}})}+\|v_\theta\|_{C(0,T;L^3_{\theta^{-2}})}\le C\eb\|u\|_{C(0,T;L^2_\theta)},
\end{equation}
where the constant $C$ is independent of $T$, $\eb$, $s$ and $u$.
\end{corollary}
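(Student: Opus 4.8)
The plan is to deduce Corollary \ref{Cor2.aumain} directly from Theorem \ref{Th2.au}, applied with the weight $\theta_{\eb,s}^{-2}$ in place of the weight $\theta_{\eb,s}^{-1}$ that the generic argument leading to \eqref{2.auphi} would give. First I would observe that, after the change of time variable $t\mapsto T-t$, the auxiliary problem \eqref{2.au} with $\phi=\theta_{\eb,s}$ is precisely problem \eqref{2.aug} with $h(t)=2\theta_{\eb,s}\theta_{\eb,s}'\Psi(u(T-t))$, in accordance with \eqref{2.relation}. I would then record the three facts needed to run Theorem \ref{Th2.au} legitimately: the weight $\theta=\theta_{\eb,s}$ has exponential growth rate $\mu$ for every $\mu>0$ and belongs to $L^2(\R)$ (its square is integrable on $\R$); the powers $\theta^{\pm2}$ are again weights of exponential growth rate, and a direct differentiation of \eqref{1.thetaweight} shows that $\theta^{-2}$ satisfies \eqref{2.w} with a constant independent of $\eb$ and $s$; and, since $u$ is a weak solution of \eqref{1.NStokes} in the sense of Definition \ref{Def2.en} and $\theta\in L^2(\R)$, one has $u\in C(0,T;\Cal H_\theta)$, so the right-hand side $\|u\|_{C(0,T;L^2_\theta)}$ of \eqref{2.aumain} is finite.

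The crucial step is to estimate the datum $h$ in the $W^{1,2}_{\theta^{-2}}$-norm. Here the point is that $h$ carries the extra factor $\theta\theta'$, and the sharp decay property \eqref{1.thetaeb}, namely $|\theta'|\le C\eb\theta^2$ (together with its higher-order analogues $|\theta''|+|\theta'''|\le C\eb^2\theta^3$ obtained by differentiating \eqref{1.thetaweight}), buys one extra power of $\theta$ compared with the generic bound $|\phi'|\le C\eb\phi$ used in \eqref{2.hu}. Concretely one gets $\theta^{-2}|h|\le C\eb\,\theta\,|\Psi(u)|$ and $\theta^{-2}|\Nx h|\le C\eb\,\theta\,(|\Psi(u)|+|\Nx\Psi(u)|)$, so that $\|h\|_{C(0,T;W^{1,2}_{\theta^{-2}})}\le C\eb\|\Psi(u)\|_{C(0,T;W^{1,2}_\theta)}$; applying Proposition \ref{Prop1.stream}, which realizes $\Psi$ as an isomorphism between $\Cal H_\theta$ and the corresponding weighted space with bounds independent of $\theta$ and $\eb$, this becomes $\|h\|_{C(0,T;W^{1,2}_{\theta^{-2}})}\le C\eb\|u\|_{C(0,T;L^2_\theta)}$.

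With this in hand I would invoke Theorem \ref{Th2.au} for problem \eqref{2.aug} with the admissible weight $\phi=\theta^{-2}$ to obtain a unique solution $v$ satisfying $\|v\|_{C(0,T;W^{1/3,2}_{\theta^{-2}})}\le C\|h\|_{C(0,T;W^{1,2}_{\theta^{-2}})}$, the constant $C$ being independent of $h$, $T$, $\eb$, $s$ and the choice of weight; by the uniqueness assertion of Theorem \ref{Th2.au} this $v$ coincides with the corrector $v_\theta$ (which already exists, for instance in $C(0,T;W^{1/3,2}_{\theta^{-1}})$, by the discussion preceding the corollary, since $\theta^{-1}\le\theta^{-2}$). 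Finally, I would use the weighted Sobolev embedding $W^{1/3,2}_{\theta^{-2}}\subset L^3_{\theta^{-2}}$ with embedding constant independent of the weight, together with the trivial inclusion $W^{1/3,2}_{\theta^{-2}}\subset L^2_{\theta^{-2}}$, and combine with the bound on $h$ from the previous paragraph to reach \eqref{2.aumain}; uniformity of $C$ in $T$, $\eb$, $s$, $u$ is inherited from Theorem \ref{Th2.au}, Proposition \ref{Prop1.stream} and the $\eb$- and $s$-uniform bounds on $\theta_{\eb,s}$.

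I expect the only genuinely delicate point to be the passage from a $\theta^{-1}$-weighted estimate (which is all one gets for a generic admissible weight) to the sharper $\theta^{-2}$-weighted one: this hinges on exploiting the special structure $h=2\theta\theta'\Psi(u)$ and the improved property \eqref{1.thetaeb} of the weight $\theta_{\eb,s}$, and on checking carefully that $\theta^{-2}$ remains an admissible weight for Theorem \ref{Th2.au} with all constants uniform as $\eb\to0$, so that the constant in \eqref{2.aumain} does not degenerate in the small-parameter limit.
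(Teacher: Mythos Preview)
Your proposal is correct and follows essentially the same route as the paper's own proof: exploit the sharper inequality \eqref{1.thetaeb}, $|\theta'|\le C\eb\theta^2$, to upgrade the generic bound \eqref{2.hu} to $\|h\|_{C(0,T;W^{1,2}_{\theta^{-2}})}\le C\eb\|u\|_{C(0,T;L^2_\theta)}$, and then apply Theorem \ref{Th2.au} with the weight $\phi=\theta^{-2}$, finishing with the weighted Sobolev embedding $W^{1/3,2}_{\theta^{-2}}\subset L^3_{\theta^{-2}}$. Your write-up is simply more explicit than the paper's two-line argument in verifying that $\theta^{-2}$ satisfies \eqref{2.w} uniformly in $\eb$ and in tracking the derivatives of $h$, but the underlying idea is identical.
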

Indeed, due to \eqref{1.thetaeb}, we may improve estimate \eqref{2.hu}:
$$
\|h(t)\|_{W^{1,2}_{\theta^{-2}}}\le C\eb\|\Psi(u(t))\|_{W^{1,2}_\theta}\le C\eb\|u\|_{L^2_\theta}
$$
and, applying estimate \eqref{2.mainest} with the weight $\phi=\theta^{-2}$ to equation \eqref{2.au}, we end up with the desired estimate \eqref{2.aumain}.
\par
We now return to energy solutions of the non-autonomous Stokes equation.

\begin{corollary}\label{Cor2.eneq} Let the weight exponential growth rate $\phi\in L^{4/3}(\R)$ and satisfy \eqref{2.w} with sufficiently small $\eb>0$. Let also $u_0\in\Cal H_b$ and $g$ satisfy \eqref{2.gas}. Then, there exists a unique energy solution $u(t)$ of the Stokes problem and this solution satisfies the estimate
\begin{equation}\label{2.useless}
\|u\|_{C(0,T;L^2_\phi)}+\|u\|_{L^2(0,T;W^{1,2}_\phi)}\le C_T\(\|u_0\|_{\Cal H_\phi}+\|g\|_{L^{4/3}(0,T;L^{4/3}_\phi)\cap L^1(0,T;L^{3/2}_\phi)}\),
\end{equation}
where the constant $C_T$ may depend on $T$, but is independent of the concrete choice of the weight. Moreover, the function $t\to\frac12\|u(t)\|^2_{L^2_\phi}-(u(t),v_\phi(t))$ is absolutely continuous and the energy identity \eqref{2.energy} holds for almost all $t\in(0,T)$.
\end{corollary}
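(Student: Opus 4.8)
The plan is to carry out the formal computation \eqref{2.formal}--\eqref{2.energy} rigorously only on a family of strong approximations, and then to pass to the limit using nothing more than the weak corrector bound \eqref{2.auphi}. Since $\phi$ is of exponential growth rate and $\phi\in L^{4/3}(\R)$, one has $\phi\in L^p(\R)$ for every $p\in[4/3,\infty]$, so $[L^2_b(\Omega)]^2\subset[L^2_\phi(\Omega)]^2$ and $g\in L^{4/3}(0,T;L^{4/3}_\phi)\cap L^1(0,T;L^{3/2}_\phi)$. Using the stream--function isomorphism of Proposition \ref{Prop1.stream} (cut the stream function of $u_0$ off to $\{|x_1|<n\}$ and mollify), I would choose $u_0^n\in\Cal V_\phi\cap[H^2(\Omega)]^2$ with $u_0^n\to u_0$ in $\Cal H_\phi$ and $\|u_0^n\|_{\Cal H_b}\le C\|u_0\|_{\Cal H_b}$; by a standard cut-off and mollification, choose $g^n\in L^2(0,T;L^2_\phi)$ with $g^n\to g$ in $L^{4/3}(0,T;L^{4/3}_\phi)\cap L^1(0,T;L^{3/2}_\phi)$ and with uniformly bounded uniformly-local norms. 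Proposition \ref{Prop1.strongw} then produces strong solutions $u^n$ of \eqref{1.NStokes} with $\partial_t u^n,\Delta_x u^n\in L^2(0,T;L^2_\phi)$ and, by Stokes regularity (Proposition \ref{Prop1.wLStokes}), $u^n\in C(0,T;\Cal V_\phi)\cap L^2(0,T;W^{2,2}_\phi)$.

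For such a strong solution the datum $h^n:=2\phi\phi'\Psi(u_1^n)$ lies in $C(0,T;W^{1,2}_{\phi^{-1}})$ and, using the control of $\partial_t u^n$ and $\Delta_x u^n$ together with \eqref{2.w}, also in $H^1(0,T;W^{1,2}_{\phi^{-1}})\cap L^2(0,T;W^{2,2}_{\phi^{-1}})$. Reducing \eqref{2.au} (with target weight $\phi^{-1}$) to a divergence-free problem exactly as in Step~1 of the proof of Theorem \ref{Th2.au}, its right-hand side then lies in $L^2(0,T;L^2_{\phi^{-1}})$ and its initial datum in $\Cal V_{\phi^{-1}}$, so Proposition \ref{Prop1.strongw} yields a \emph{strong} corrector $v_\phi^n$ with $\partial_t v_\phi^n,\Delta_x v_\phi^n\in L^2(0,T;L^2_{\phi^{-1}})$, which also obeys \eqref{2.auphi}. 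With this regularity the computation \eqref{2.formal} is legitimate: $(u^n,v_\phi^n)$ is absolutely continuous in $t$; the term $(u^n,-\partial_t v_\phi^n-\Delta_x v_\phi^n)=-(u^n,\nabla_x q^n)$ vanishes after integration by parts (the boundary contributions at $x_1=\pm\infty$ drop along a sequence $R\to\infty$ since $u_1^n\bar q^n\in L^1(\Omega)$, while $\int_{-1}^1 u_1^n(\cdot,x_2)\,dx_2=2\Bbb S u_1^n\equiv0$ removes the $\Bbb S q^n$-part); and the pressure term of \eqref{1.NStokes} drops out because $\operatorname{div}(\phi^2u^n-v_\phi^n)=0$ and $(\phi^2u^n-v_\phi^n)\cdot n=0$ on $\partial\Omega$. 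This yields \eqref{2.energy} for $u^n$, equivalently its integrated form over every $[t_1,t_2]\subset[0,T]$.

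Integrating that identity and using: the coercivity $(\nabla_x u^n,\nabla_x(\phi^2u^n))\ge\|\nabla_x u^n\|^2_{L^2_\phi}-C\eb\|u^n\|^2_{L^2_\phi}\ge\tfrac12\|\nabla_x u^n\|^2_{L^2_\phi}$ (for small $\eb$, by \eqref{2.w} and the Poincar\'e inequality in $x_2$); the bound $|(u^n(t),v_\phi^n(t))|+|(u_0^n,v_\phi^n(0))|\le C\eb\|u^n\|_{C(0,T;L^2_\phi)}(\|u^n(t)\|_{L^2_\phi}+\|u_0^n\|_{L^2_\phi})$ from \eqref{2.auphi}; and the weighted Ladyzhenskaya inequality $\|u^n\|_{L^4_\phi}\le C\|u^n\|_{L^2_\phi}^{1/2}\|\nabla_x u^n\|_{L^2_\phi}^{1/2}$ (obtained from the non-weighted one applied to $\phi u^n$ via $|\phi'|\le C\eb\phi$, with constant independent of $\phi$, $\eb$) together with H\"older in time, which bound $\int_0^t(g^n,\phi^2u^n)\,ds$ and $\int_0^t(g^n,v_\phi^n)\,ds$ by, respectively, $C\|u^n\|_{C(0,T;L^2_\phi)}^{1/2}\|g^n\|_{L^{4/3}(0,T;L^{4/3}_\phi)}\|\nabla_x u^n\|_{L^2(0,T;L^2_\phi)}^{1/2}$ and $C\eb\|u^n\|_{C(0,T;L^2_\phi)}\|g^n\|_{L^1(0,T;L^{3/2}_\phi)}$; then, after Young's inequality and absorbing the $O(\eb)+O(\delta)$ multiples of $\|u^n\|_{C(0,T;L^2_\phi)}^2+\|\nabla_x u^n\|_{L^2(0,T;L^2_\phi)}^2$ into the left-hand side, one gets \eqref{2.useless} for $u^n$ with $C_T$ independent of $n$ and of the weight. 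Applying the same estimate to the difference $u^n-u^m$ (whose corrector is $v_\phi^n-v_\phi^m$, by linearity) and, with $\phi=\theta_{\eb,s}$, combining with \eqref{1.wbeb}, shows $(u^n)$ is Cauchy in $C(0,T;L^2_\phi)\cap L^2(0,T;W^{1,2}_\phi)$ and bounded in $L^\infty(0,T;L^2_b)$; its limit $u$ has $\nabla_x u\in L^2_b([0,T]\times\Omega)$, solves \eqref{1.NStokes} in the sense of distributions and satisfies the constraints defining $\Cal H_b$, hence is an energy solution obeying \eqref{2.useless}. For uniqueness, the difference $w$ of two energy solutions with the same data has $W(t):=\int_0^t w(s)\,ds$ solving \eqref{1.NStokes} with zero data, and since $w\in L^2(0,T;\Cal V_\phi)$ and $W(t)=-A^{-1}w(t)\in W^{2,2}_\phi$ by stationary Stokes regularity (so $\partial_t W,\Delta_x W\in L^2(0,T;L^2_\phi)$), $W$ is a strong solution, whence $W\equiv0$ by Proposition \ref{Prop1.strongw}, i.e.\ $w\equiv0$. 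Finally, since $u^n\to u$ in $C(0,T;L^2_\phi)\cap L^2(0,T;W^{1,2}_\phi)$, $v_\phi^n\to v_\phi$ in $C(0,T;L^2_{\phi^{-1}}\cap L^3_{\phi^{-1}})$ (by \eqref{2.auphi}) and $g^n\to g$, all terms in the integrated identity for $u^n$ pass to the limit — the forcing terms by $L^1(0,T)$-convergence of $(g^n,\phi^2u^n)$ and $(g^n,v_\phi^n)$, which follows from $\|u^n\|_{L^4(0,T;L^4_\phi)}\le C$ and the weighted Ladyzhenskaya inequality. Hence $t\mapsto\tfrac12\|u(t)\|^2_{L^2_\phi}-(u(t),v_\phi(t))$ equals its value at $t=0$ plus the integral of an $L^1(0,T)$ function, so it is absolutely continuous, and \eqref{2.energy} holds for a.e.\ $t\in(0,T)$.

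The main obstacle is conceptual rather than computational: for the energy solution itself we may not differentiate $(u,v_\phi)$ in time, because by Remark \ref{Rem2.strange} there is no usable bound on $\partial_t v_\phi$ (that would require controlling $\partial_t u_1$, which is unavailable for energy solutions by Remark \ref{Rem1.dif}) — only the weak estimate \eqref{2.auphi} survives. Performing \eqref{2.formal} at the level of the strong approximations, where $v_\phi^n$ genuinely is a strong solution — the precise point at which the extra information $\partial_t u^n\in L^2(0,T;L^2_\phi)$ is essential — and then transferring the identity to the limit using \eqref{2.auphi} alone is the crux. A subsidiary care is to estimate the forcing using only its $L^{4/3}$- and $L^1$-in-time integrability (weighted Ladyzhenskaya plus H\"older in time), so that exactly the hypotheses \eqref{2.gas} are used.
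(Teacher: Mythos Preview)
Your proposal is correct and follows essentially the same strategy as the paper's proof: approximate the data by smooth ones, use Proposition~\ref{Prop1.strongw} to obtain strong solutions $u^n$ for which the energy identity \eqref{2.energy} is rigorously valid, derive \eqref{2.useless} from the integrated identity via the coercivity \eqref{2.nx}, the corrector bound \eqref{2.auphi}, and the weighted Ladyzhenskaya inequality, apply the estimate to differences to obtain a Cauchy sequence, and finally pass to the limit in the integrated identity \eqref{2.inten} using only the convergence $v_\phi^n\to v_\phi$ in $C(0,T;L^2_{\phi^{-1}}\cap L^3_{\phi^{-1}})$; uniqueness is handled, as in the paper, by integrating the difference in time and invoking Proposition~\ref{Prop1.strongw} on the resulting strong solution with zero data. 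You supply more detail than the paper does on why the corrector $v_\phi^n$ associated with a strong $u^n$ is itself strong (so that \eqref{2.formal} is justified at the approximate level), but this is exactly the mechanism the paper relies on implicitly when it writes ``since they are smooth, the energy identity holds for them''.
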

\begin{proof} We first derive estimate \eqref{2.useless} assuming that the validity of \eqref{2.energy} is already verified. Then,
for sufficiently small $\eb>0$,
\begin{multline}\label{2.nx}
(\Nx u,\Nx(\phi^2 u))=\|\Nx u\|^2_{L^2_\phi}+2(\Nx u,\phi\phi' u)\ge\\\ge \|\Nx u\|^2_{L^2_\phi}-C\eb(\phi^2|\Nx u|,|u|)\ge\frac12\|\Nx u\|^2_{L^2_\phi}-C\eb^2\|u\|^2_{L^2_{\phi}}\ge\frac14\|u\|^2_{W^{1,2}_\phi},
\end{multline}
where we have implicitly used the weighted version of the Poincare inequality
$$
\|u\|_{L^2_\phi}\le C\|\Nx u\|_{L^2_{\phi}}.
$$
Moreover, due to \eqref{2.auphi} and the  weighted Ladyzhenskaya inequality
\begin{equation}\label{2.lady}
\|u\|_{L^4_\phi}^2\le C\|u\|_{L^2_\phi}\|u\|_{W^{1,2}_\phi},
\end{equation}
together with the H\"older inequality,
\begin{multline}\label{2.long}
|(g,\phi^2 u-v_\phi)|\le C\|g\|_{L^{4/3}_\phi}\|u\|_{L^4_\phi}+\|g\|_{L^{3/2}_\phi}\|v_\phi\|_{L^3_{\phi^{-1}}}\le C\|g\|_{L^{4/3}_\phi}\|u\|_{L^2_\phi}^{1/2}\|u\|_{W^{1,2}_\phi}^{1/2}+\\+C\|g\|_{L^{3/2}_\phi}\|u\|_{C(0,T;L^2_\phi)}\le
 C\|g\|^{4/3}_{L^{4/3}_\phi}\|u\|_{L^2_\phi}^{2/3}+C\|g\|_{L^{3/2}_\phi}\|u\|_{C(0,T;L^2_\phi)}+1/8\|u\|^2_{W^{1,2}_\phi}.
\end{multline}
Integrating now the energy identity \eqref{2.energy} in time and using \eqref{2.long}, \eqref{2.nx} and the obvious estimate
$$
|(u,v_\phi)|\le \|u\|_{L^2_{\phi}}\|v_\phi\|_{L^2_{\phi^{-1}}}\le C\eb\|u\|_{C(0,T;L^2_\phi)}^2,
$$
we arrive at
\begin{multline}
(1-C\eb)\|u\|^2_{C(0,T;L^2_\phi)}+\|u\|^2_{L^2(0,T;W^{1,2}_\phi)}\le C\|g\|^{4/3}_{L^{4/3}(0,T;L^{4/3}_\phi)}\|u\|_{C(0,T;L^2_\phi)}^{2/3}+\\+C\|g\|_{L^1(0,T;L^{3/2}_\phi)}\|u\|_{C(0,T;L^{2}_\phi)}+C\|u_0\|^2_{L^2_\phi}
\end{multline}
and estimate \eqref{2.useless} is an immediate corollary of this estimate if $\eb>0$ is small enough.
\par
Note that the uniqueness of a solution $u$ can be done exactly as in Theorem \ref{Th2.au}. To verify the existence, again similar to the proof of Theorem \ref{Th2.au}, we approximate the initial data $u_0$ by the sequence  $u_0^n\in\Cal H_\phi$ of smooth initial data
 which is convergent to $u_0$ in that space and, analogously, we approximate the external force $g$ by the sequence $g_n$ of smooth ones which is convergent in $L^{4/3}(0,T;L^{4/3}_\phi)\cap L^1(0,T;L^{3/2}_\phi)$. We note that, since the weight $\phi\in L^{4/3}(\R)$ then it is not difficult to check, $\phi\in L^{3/2}(\R)$ as well and, thanks to \eqref{1.wb}
$$
u_0\in\Cal H_\phi,\ \ g\in  L^{4/3}(0,T;L^{4/3}_\phi)\cap L^1(0,T;L^{3/2}_\phi)
$$
and, therefore, such approximations exist. Let $u_n$ be the corresponding solutions of \eqref{1.NStokes} which exist due to Proposition \ref{Prop1.strongw}. Then, applying the proved estimate \eqref{2.useless} to the differences $u_n-u_m$ of two approximation solutions (since they are smooth, the energy identity hold for them), we have
\begin{equation*}
\|u_n-u_m\|_{C(0,T;L^2_\phi)}+\|u_n-u_m\|_{L^2(0,T;W^{1,2}_\phi)}\le C_T\(\|u_0^n-u_0^m\|_{\Cal H_\phi}+\|g_n-g_m\|_{L^{4/3}(0,T;L^{4/3}_\phi)\cap L^1(0,T;L^{3/2}_\phi)}\).
\end{equation*}
Thus, $u_n-u_m$ is a Cauchy sequence in $C(0,T;L^2_\phi)\cap L^2(0,T;W^{1,2}_\phi)$ and, passing to the limit $n\to\infty$, we construct a solution $u$ of problem \eqref{1.NStokes} belonging to this space and justify estimate \eqref{2.useless}. Moreover, applying this estimate with the shifted weights $\phi(x_1-s)$, taking the supremum over $s\in\R$ and using \eqref{1.wb}, we check that $u$ belongs to the uniformly local spaces \eqref{2.defspace}. Thus, the existence of an energy solution is also verified.
\par
It only remains to prove the energy identity. To this end, we write the energy identity for $u_n$ in the equivalent integral form:
\begin{multline}\label{2.inten}
\frac12\|u_n(s)\|^2_{L^2_\phi}-(u_n(s),v^n_\phi(s))-\frac12\|u_n(\tau)\|^2_{L^2_\phi}+(u_n(\tau),v^n_\phi(\tau))=\\=
\int_\tau^s(g_n(t),\phi^2u_n(t)-v^n_\phi(t))-(\Nx u_n(t),\Nx(\phi^2 u_n(t)))\,dt,
\end{multline}
where $v_\phi^n$ are the solutions of the auxiliary problem \eqref{2.au} which correspond to the solutions $u_n$. Note that, due to estimate \eqref{2.auphi} applied to $v_\phi^n-v_{\phi}^m$, we know that $v_\phi^n$ converges strongly to $v_\phi$ in the spaces $C(0,T;L^2_{\phi^{-1}})$ and
 $C(0,T;L^3_{\phi^{-1}})$. This allows us to pass to the limit $n\to\infty$ in \eqref{2.inten} and verify that the limit function $u$ also satisfies this integral identity. Since the integral form \eqref{2.inten} of the energy identity is equivalent to the differential form \eqref{2.energy}, the energy equality is proved and the corollary is also proved.
\end{proof}
\begin{remark}\label{Rem2.rem} Note that Theorem \ref{Th2.au} does not give us the control over the $L^2(0,T;W^{1,2}_{\phi^{-1}})$-norm of the corrector $v_\phi$, so we are not allowed to multiply directly equation \eqref{1.NStokes} by $\phi^2u-v_\phi$ (the term $(\Dx u,v_\phi)$ a priori may have no sense). By this reason, we have to justify this multiplication in a different way based on the approximations and the fact that all bad terms are cancelled out since $v_\phi$ solves the {\it adjoint} equation.
\par
Mention also that the validity of the energy identity \eqref{2.energy} remains true if we replace the weight function $\phi$ by the proper {\it cut-off} function $\varphi$ with finite support (the proof just repeats word by word the one given in Corollary \ref{Cor2.eneq}). We will use this observation in the next section for verifying the uniqueness for the non-linear problem.
\end{remark}

\section{The Navier-Stokes problem: weighted energy estimates}\label{s3}
In this section, we derive the key estimate for the infinite-energy solutions of the Navier-Stokes problem in a strip $\Omega$:
\begin{equation}\label{3.eqmain}
\begin{cases}
\Dt u+(u,\Nx)u-\Dx u+\Nx p=g,\\
u\big|_{\partial\Omega}=0,\ \ \divv u=0,\ \ u\big|_{t=0}=u_0.
\end{cases}
\end{equation}
We recall that the problem possesses the mean flux first integral:
\begin{equation}\label{3.flux}
\Bbb Su_1=\frac12\int_{-1}^1u_1(t,x_1,s)\,ds=c,
\end{equation}
where the constant $c$ may depend on $t$ ($c=c(t)$), but is independent of $x_1$. At the first step, we consider the case of zero flux:
\begin{equation}\label{3.zero}
c=0.
\end{equation}
The general case will be reduced later to this particular case. Then, similar to Definition \ref{Def2.en} a function $u(t)$ is a weak (energy) solution of problem \eqref{3.eqmain} if $u$ satisfies \eqref{2.defspace} for every weight function $\phi$ of exponential growth rate such that $\phi\in L^2(\R)$ and solves \eqref{3.eqmain} in the sense of distributions. Note that, due to \eqref{2.defspace} and the Ladyzhenskaya inequality,
\begin{multline*}
\|(u,\Nx)u\|_{L^{4/3}_b((0,T)\times\Omega)}\le \|u\|_{L^4_b((0,T)\times\Omega)}\|\Nx u\|_{L^2_b((0,T)\times\Omega)}\le\\\le C\|u\|_{L^\infty(0,T;L^2_b(\Omega))}^{1/2}\|\Nx u\|_{L^2_b((0,T)\times\Omega)}^{3/2}\le C.
\end{multline*}
Moreover, due to the embedding $W^{1,2}\subset L^p$ for all $p<\infty$, we also have that
$$
\|(u,\Nx)u\|_{L^1_b(0,T;L^{3/2}_b)}\le C\|\Nx u\|_{L^2_b((0,T)\times\Omega)}\|u\|_{L^2_b(0,T;L^6_b)}\le C\|\Nx u\|^2_{L^2_b((0,T)\times\Omega)}\le C.
$$
Thus, the function $\bar g:=g-(u,\Nx)u$ satisfies assumption \eqref{2.gas} and, therefore, treating the non-linear term $(u,\Nx)u$ in equation \eqref{3.eqmain} as an external force and using Corollary \ref{Cor2.eneq}, we see that the solution $u$ of \eqref{3.eqmain} satisfies the following energy identity:
\begin{equation}\label{3.energy}
\frac d{dt}\(\frac12\|u(t)\|^2_{L^2_{\theta}}-(u(t),v_\theta(t))\)+(\Nx u(t),\Nx (\theta^2u(t)))=(g-(u(t),\Nx)u(t),\theta^2u(t)-v_\theta(t)),
\end{equation}
where the weight function $\theta=\theta_{\eb,s}$ is defined by \eqref{1.thetaweight} ($\eb>0$ is small enough and $s\in\R$) and the corrector $v_\theta$ solves the auxiliary problem \eqref{2.au}. The next theorem gives the key energy estimate for the solution $u$.

\begin{theorem}\label{Th3.0flux} Let $u(t)$, $t\in[0,T]$ be a weak solution of the Navier-Stokes problem \eqref{3.eqmain} which satisfies the zero flux condition \eqref{3.zero}. Then, the following estimate holds:
\begin{equation}\label{3.0est}
\|u\|_{L^\infty(0,T;\Cal H_b)}+\|\Nx u\|_{L^2_b((0,T)\times\Omega)}\le C(1+\|u_0\|_{\Cal H_b}+\|g\|_{L^2_b})^2,
\end{equation}
where the constant $C$ is independent of $g$, $u_0$, $T$ and $u$.
\end{theorem}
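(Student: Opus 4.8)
The plan is to derive the estimate \eqref{3.0est} directly from the weighted energy identity \eqref{3.energy}, controlling each term on the right-hand side using the smallness of $\eb$ provided by the special weight $\theta=\theta_{\eb,s}$, and then passing from the weighted bound to the uniformly local bound via Proposition \ref{Prop1.wb}. First I would rewrite \eqref{3.energy} in its integrated-in-time form and estimate the "good" term $(\Nx u,\Nx(\theta^2u))$ from below exactly as in \eqref{2.nx}, obtaining $(\Nx u,\Nx(\theta^2u))\ge\frac14\|u\|^2_{W^{1,2}_\theta}$ for $\eb$ small. The term $(u,v_\theta)$ is handled by Corollary \ref{Cor2.aumain}: $|(u,v_\theta)|\le\|u\|_{L^2_\theta}\|v_\theta\|_{L^2_{\theta^{-2}}}\le C\eb\|u\|^2_{C(0,T;L^2_\theta)}$, which can be absorbed on the left for small $\eb$. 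The external force contributes $(g,\theta^2u)$, bounded by $\|g\|_{L^2_\theta}\|u\|_{L^2_\theta}$, and $(g,v_\theta)$, bounded by $\|g\|_{L^2_{\theta^2}}\|v_\theta\|_{L^2_{\theta^{-2}}}\le C\eb\|g\|_{L^2_\theta}\|u\|_{C(0,T;L^2_\theta)}$ (using $\theta\le1$); both are controlled by Young's inequality with small quadratic part.

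The crux is the cubic nonlinear term. Using $\divv u=0$ and $u|_{\partial\Omega}=0$, integration by parts gives the identity $((u,\Nx)u,\theta^2u)=-\int_\Omega\theta\theta'u_1|u|^2\,dx$, and by \eqref{1.thetaeb} this is bounded by $C\eb\int_\Omega\theta^2|u|^3\,dx=C\eb\|u\|^3_{L^3_\theta}$. For the corrector part, $((u,\Nx)u,v_\theta)$, I would instead integrate by parts to move the derivative onto $v_\theta$: writing $((u,\Nx)u,v_\theta)=-\int_\Omega(u\otimes u):\Nx v_\theta\,dx$ is awkward since we lack control on $\Nx v_\theta$; better to keep it as $\int_\Omega (u,\Nx)u\cdot v_\theta\,dx$ and estimate by Hölder with exponents $3,3,3$: $|((u,\Nx)u,v_\theta)|\le\|u\|_{L^3_\theta}\|\Nx u\|_{L^2_?}\dots$ — this needs care, so the cleaner route is $|((u,\Nx)u,v_\theta)|\le\|u\|_{L^6_\theta}\|\Nx u\|_{L^2_\theta}\|v_\theta\|_{L^3_{\theta^{-2}}}\le C\eb\|u\|_{C(0,T;L^2_\theta)}\|u\|_{W^{1,2}_\theta}\|u\|_{L^6_\theta}$, then use the weighted embedding $W^{1,2}_\theta\subset L^6_\theta$ to get a term of the form $C\eb\|u\|_{C(0,T;L^2_\theta)}\|u\|^2_{W^{1,2}_\theta}$, again absorbable for small $\eb$ once $\|u\|_{C(0,T;L^2_\theta)}$ is a priori bounded.

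To close the argument, I would invoke \eqref{1.wbeb}, which gives $\|u_0\|_{L^2_\theta}\le C\eb^{-1/2}\|u_0\|_{L^2_b}$ and similarly $\|g\|_{L^2_\theta}\le C\eb^{-1/2}\|g\|_{L^2_b}$; combining with the weighted Ladyzhenskaya/Sobolev inequality $\|u\|^3_{L^3_\theta}\le C\|u\|_{L^2_\theta}^2\|u\|_{W^{1,2}_\theta}$ (from \eqref{2.lady}-type interpolation), the cubic terms become $C\eb\|u\|^2_{L^2_\theta}\|u\|_{W^{1,2}_\theta}\le\frac1{16}\|u\|^2_{W^{1,2}_\theta}+C\eb^2\|u\|^4_{L^2_\theta}$. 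Denoting $E(t):=\sup_{\tau\le t}\|u(\tau)\|^2_{L^2_\theta}$, the integrated inequality yields $E(T)\le C\eb^{-1}(\|u_0\|^2_{L^2_b}+\|g\|^2_{L^2_b}T)+C\eb^2 E(T)^2+C\eb E(T)^{3/2}$; the main obstacle — and the whole point of the $\eb$-dependent weights — is now to choose $\eb=\eb(u_0,g)$ small enough (roughly $\eb\sim(1+\|u_0\|_{L^2_b}+\|g\|_{L^2_b})^{-2}$, so that the quantity $\eb\cdot\eb^{-1}(\dots)$ stays bounded while $\eb^2 E^2$ and $\eb E^{3/2}$ remain subcritical) to perform a continuity/bootstrap argument showing $E(T)$ cannot escape a ball of radius $\sim(1+\|u_0\|_{\Cal H_b}+\|g\|_{L^2_b})^4$. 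Finally, taking the supremum over the shift parameter $s\in\R$ and applying Proposition \ref{Prop1.wb} with $p=2$ converts the bound on $\|u\|_{C(0,T;L^2_\theta)}+\|u\|_{L^2(0,T;W^{1,2}_\theta)}$ into \eqref{3.0est}; I would need to check that the resulting power of $(1+\|u_0\|_{\Cal H_b}+\|g\|_{L^2_b})$ matches the square claimed (which follows from the $\eb^{-1}$ scaling against the $\eb$-linear smallness), and that all constants are genuinely independent of $T$, $s$, and $u$.
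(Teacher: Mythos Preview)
Your proposal is essentially the paper's argument: exploit the weighted energy identity \eqref{3.energy}, use the $\eb$-smallness of $\theta_{\eb,s}$ to control the cubic terms, fix $\eb\sim(1+\|u_0\|_{\Cal H_b}+\|g\|_{L^2_b})^{-2}$, close by continuity, and take the supremum over $s$ via Proposition~\ref{Prop1.wb}. Two places where the paper's execution is tighter than yours: first, the paper bounds $|((u,\Nx)u,\theta^2u)|\le C\eb\|u\|_{L^2_\theta}\|\Nx u\|^2_{L^2_\theta}$ directly (H\"older $L^2$--$L^4$--$L^4$, then $\|u\|_{L^4_\theta}\le C\|\Nx u\|_{L^2_\theta}$), so that under the single bootstrap hypothesis $C\eb\|u\|_{C(0,T;L^2_\theta)}\le\tfrac12$ \emph{both} cubic terms are absorbed linearly into the dissipation and one arrives at a linear Gronwall inequality \eqref{3.wenest}, avoiding the Riccati-type analysis your quartic $C\eb^2\|u\|^4_{L^2_\theta}$ would require; second, the $T$-independence you flag as needing to be checked is obtained precisely by applying Gronwall to the differential form of the inequality with the coercive term $\alpha\|u\|^2_{L^2_\theta}$ from \eqref{2.nx}, producing the decay factor $e^{-\alpha t}$ in \eqref{3.est2}, whereas your direct time-integration leaves the factor $\|g\|^2_{L^2_b}T$ which does not close for large $T$.
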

\begin{proof} To estimate the right-hand side of \eqref{3.energy}, we note that, due to the divergence free assumption,
$$
((u,\Nx)u,\theta^2u)=-\frac12(\divv u,\theta^2|u|^2)-(\theta\theta' u_1,|u|^2)=-(\theta\theta'u_1,|u|^2)
$$
and, therefore, due to \eqref{1.thetaeb} and the weighted Sobolev embedding theorem,
\begin{equation}\label{3.inest1}
|(u,\Nx u),\theta^2u)|\le C\eb\|u\|_{L^2_\theta}\|u\|^2_{L^4_{\theta}}\le C\eb\|u\|_{L^2_\theta}\|\Nx u\|^2_{L^2_\theta},
\end{equation}
where the constant $C$ is independent of $\eb$. Analogously, using also estimate \eqref{2.aumain}, we have
\begin{equation}\label{3.inest2}
|(u,\Nx)u,v_\theta)|\le C\|u\|_{L^6_\theta}\|\Nx u\|_{L^2_\theta}\|v_\theta\|_{L^3_{\theta^{-2}}}\le C\eb\|u\|_{C(0,T;L^2_\theta)}\|\Nx u\|^2_{L^2_\theta}.
\end{equation}
Inserting these estimates into the right-hand side of \eqref{3.energy}, estimating the terms involving the external force $g$ by H\"older inequality and using \eqref{2.nx} and the Poincare inequality, we end up with
\begin{equation}\label{3.wenest}
\frac d{dt}\(\frac12\|u\|^2_{L^2_{\theta}}-(u,v_\theta)\)+\alpha\|u\|^2_{L^2_\theta}+\alpha\|\Nx u\|^2_{L^2_\theta}\(1-C\eb\|u\|_{C(0,T;L^2_\theta)}\)\le C\|g\|^2_{L^2_\theta}+C\eb\|u\|_{C(0,T;L^2_\theta)}^2,
\end{equation}
where the positive constants $C$ and $\alpha$ are independent of $\eb$.
\par
We claim that \eqref{3.wenest} implies the desired estimate \eqref{3.0est}. To show that, we first make an additional assumption that the parameter $\eb>0$ is small enough to guarantee the inequality
\begin{equation}\label{3.loop}
C\eb\|u\|_{C(0,T;L^2_\theta)}\le\frac12.
\end{equation}
Then, the last term in the left hand side of \eqref{3.wenest} can be neglected and, applying the Gronwall inequality to the obtained estimate
 and using that
\begin{equation}\label{3.smallcor}
|(u,v_\theta)|\le C\|u\|_{L^2_\theta}\|v_\theta\|_{L^2_{\theta^{-1}}}\le C\eb\|u\|_{C(0,T;L^2_\theta)}^2,
\end{equation}
after the elementary transformations,
we end up with
\begin{equation}\label{3.est2}
\|u(t)\|^2_{L^2_\theta}\le C\|u_0\|_{L^2_\theta}e^{-\alpha t}+ C\|g\|^2_{L^2_\theta}+C\eb\|u\|_{C(0,T;L^2_\theta)}^2,\ \ t\in[0,T],
\end{equation}
where the constant $C$ is independent of $\eb$. For sufficiently small $\eb$, this estimate gives
\begin{equation}
\|u\|_{C(0,T;L^2_\theta)}\le C\(1+\|g\|_{L^2_\theta}+\|u_0\|_{\Cal H_\theta}\),
\end{equation}
where $C$ is independent of $\eb$. Moreover, using \eqref{1.wbeb}, we conclude that
\begin{equation}\label{3.uniform}
\|u\|_{C(0,T;L^2_\theta)}\le C_1\eb^{-1/2}\(1+\|g\|_{L^2_b}+\|u_0\|_{\Cal H_b}\),
\end{equation}
where the constant $C$ is independent of $\eb$. Now we are able to justify assumption \eqref{3.loop}. Indeed, if we fix $\eb\ll1$ in a such way that
$$
CC_1\eb^{1/2}\(1+\|g\|_{L^2_b}+\|u_0\|_{\Cal H_b}\)=\frac12,
$$
i.e.
\begin{equation}\label{3.ebfix}
\eb:=C_2\(1+\|g\|_{L^2_b}+\|u_0\|_{\Cal H_b}\)^{-2},
\end{equation}
then inequality \eqref{3.uniform} will imply the inequality \eqref{3.loop}. Since \eqref{3.loop} is satisfied for $T=0$ and the function $t\to \|u(t)\|_{L^2_\theta}$ is continuous (by the definition of an energy solution), the standard continuity arguments show that \eqref{3.loop} holds for all $T$. Thus, estimate \eqref{3.uniform} is justified if $\eb>0$ is chosen by \eqref{3.ebfix}. Integrating now \eqref{3.wenest} in time and using \eqref{3.loop} and \eqref{3.uniform}, we also get the control of the gradient:
 \begin{multline}\label{3.uniform1}
\|u\|_{C(0,T;L^2_\theta)}^2+\int_t^{t+1}\|\Nx u(s)\|^2_{L^2_\theta}\,ds\le\\\le C\eb^{-1}\(1+\|g\|_{L^2_b}+\|u_0\|_{\Cal H_b}\)^2\le C\(1+\|g\|_{L^2_b}+\|u_0\|_{\Cal H_b}\)^4.
\end{multline}
Finally, taking into the account that the weight $\theta=\theta_{\eb,s}$ depends on the parameter $s\in\R$ and that \eqref{3.uniform1} is uniform with respect to this parameter, we may deduce the desired estimate \eqref{3.0est} by taking the supremum over $s\in\R$ and using \eqref{1.wb}. Thus, the theorem is proved.
\end{proof}
Our next task is to obtain the analogue of estimate \eqref{3.0est} for the general case of non-zero flux. For simplicity, we restrict ourselves to the autonomous case where $c\in\R$ is independent of $t$ although the generalization to the time dependent fluxes $c=c(t)$ is straightforward. Following \cite{ZelikGlasgow}, we reduce the non-zero flux case to the case $c=0$ considered before by introducing the special Poiseuille type velocity profile $V_c(x)=(v_c(x_2),0)^t$ such that
\begin{equation}\label{3.c}
\Bbb Sv_c=c.
\end{equation}
Then, the difference $\bar u:=u-V_c$ will have zero flux and satisfy the perturbed version of equation \eqref{3.eqmain}
\begin{equation}\label{3.eqflux}
\Dt\bar u+(\bar u,\Nx)\bar u-\Dx\bar u+(V_c,\Nx) \bar u+(\bar u,\Nx)V_c+\Nx p=g+\Dx V_c,\ \ \divv\bar u=0,\ \ \Bbb S\bar u_1=0.
\end{equation}
Then, by definition, $u$ is an energy solution of \eqref{3.eqmain} if $\bar u$ is an energy solution of \eqref{3.eqflux}, see Definition \ref{Def2.en}.
\par
The next lemma specifies the choice of the function $V_c$.

\begin{lemma}\label{Lem3.flux} For any $c\in\R$, there exists $V_c(x)=(v_c(x_2),0)^t\in H^2(-1,1)\cap H^1_0(-1,1)$ such that \eqref{3.c} is satisfied and, for big $c$,
\begin{equation}\label{3.c.est}
1.\ \ \|v_c\|_{L^\infty}\sim c,\ \ 2.\ \ \|v_c'\|_{L^2}\sim c^{3/2},\ \ 3.\ \ \|v''_c\|_{L^{3/2}}\sim c^{7/3}.
\end{equation}
Moreover, the linearized operator $L_{c}w:=-\Dx V_c+(V_c,\Nx)w+(w,\Nx)V_c$ is energy stable, i.e., there exists $\kappa>0$ (independent of $c$) such that
\begin{equation}\label{3.positive}
(L_{c}w,w)\ge\kappa\|w\|_{H^1}^2,\ \ \forall w\in H^1_0(\Omega).
\end{equation}
\end{lemma}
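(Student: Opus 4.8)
The plan is to construct $v_c$ essentially by hand as a rescaled, truncated version of the Poiseuille profile and then verify the three scaling relations and the coercivity estimate separately. First I would try the naive choice $v_c(x_2) = \tfrac{3c}{2}(1-x_2^2)$, which satisfies $\Bbb S v_c = c$ exactly; however this gives $\|v_c'\|_{L^2}\sim c$ and $\|v_c''\|_{L^{3/2}}\sim c$, which is \emph{not} what \eqref{3.c.est} asks for (there we want much faster growth, $c^{3/2}$ and $c^{7/3}$). So instead I would look for a profile that is essentially constant $\sim c$ on the bulk of $(-1,1)$ and drops to zero in a thin boundary layer of width $\delta = \delta(c)\to 0$. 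Concretely, take $v_c(x_2) = c\,\chi\!\left(\tfrac{1-|x_2|}{\delta}\right)$ for a fixed smooth cutoff $\chi$ with $\chi(0)=0$, $\chi(s)=1$ for $s\ge 1$, after a small correction of the constant to make $\Bbb S v_c=c$ hold exactly (the correction is $O(c\delta)$ and does not affect the scalings). Then $\|v_c\|_{L^\infty}\sim c$ automatically, $\|v_c'\|_{L^2}\sim c\,\delta^{-1/2}$, and $\|v_c''\|_{L^{3/2}}\sim c\,\delta^{-2}\cdot\delta^{2/3} = c\,\delta^{-4/3}$. Matching these to \eqref{3.c.est} forces $\delta^{-1/2}\sim c^{1/2}$ and $\delta^{-4/3}\sim c^{4/3}$, i.e.\ $\delta\sim c^{-1}$; with this choice all three relations in \eqref{3.c.est} hold simultaneously.

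Next I would record that $V_c=(v_c(x_2),0)^t$ is automatically divergence free and independent of $x_1$, so $(V_c,\Nx)w = v_c(x_2)\,\partial_{x_1}w$ and $(w,\Nx)V_c = (w_1 v_c'(x_2),0)^t$; in particular $-\Dx V_c = (-v_c''(x_2),0)^t$ is a function of $x_2$ only and should be grouped with the forcing rather than with the operator $L_c$ — I'd note that the operator in \eqref{3.positive} is really $L_c w = (V_c,\Nx)w + (w,\Nx)V_c$ (the $-\Dx V_c$ term in the displayed definition being independent of $w$, hence contributing nothing to the bilinear form $(L_c w, w)$). For the coercivity \eqref{3.positive}, integrate by parts: since $\divv w = 0$ and $w|_{\partial\Omega}=0$, the transport term is skew, $((V_c,\Nx)w, w) = \int_\Omega v_c(x_2)\,\partial_{x_1}w\cdot w = \tfrac12\int_\Omega v_c(x_2)\,\partial_{x_1}|w|^2 = 0$, using that $v_c$ does not depend on $x_1$. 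So the only surviving term is $((w,\Nx)V_c,w) = \int_\Omega v_c'(x_2)\,w_1 w_2$. The key point is that $w_2$ vanishes on $x_2=\pm1$ and $v_c'$ is supported in the two boundary layers of width $\delta$, so one can use a Hardy/Poincar\'e inequality: $|w_2(x_1,x_2)| \le (1-|x_2|)\,\|\partial_{x_2}w_2\|_{L^2_{x_2}}^{1/2}\cdot(\dots)$ — more precisely, on the support of $v_c'$ one has $1-|x_2|\le\delta$, and $|w_2(x_1,x_2)|\le \int_{\mp1}^{x_2}|\partial_{x_2}w_2|\,dx_2' \le \delta^{1/2}\|\partial_{x_2}w_2(x_1,\cdot)\|_{L^2}$ on that set. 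Combined with $\|v_c'\|_{L^\infty}\sim c\delta^{-1}$ (for the layer profile above) this gives
$$
\left|\int_\Omega v_c'(x_2) w_1 w_2\,dx\right| \le C\,c\,\delta^{-1}\cdot\delta^{1/2}\cdot\delta^{1/2}\,\|w_1\|_{L^2}\|\partial_{x_2}w_2\|_{L^2} = C\,c\,\|w_1\|_{L^2}\|\partial_{x_2}w_2\|_{L^2}.
$$
Unfortunately this bound is $O(c)\|w\|_{H^1}^2$, which is \emph{not} small and does \emph{not} beat the good term, so coercivity with a $c$-independent $\kappa$ fails for the crude layer profile.

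Hence the main obstacle — and the point where the construction must be done carefully — is arranging \eqref{3.positive} to hold with $\kappa$ \emph{independent of $c$} while keeping the growth rates \eqref{3.c.est}. The resolution, which I would carry out following \cite{ZelikGlasgow}, is to make the boundary-layer width depend on $c$ in exactly the borderline way that makes the Hardy estimate gain a small constant: one wants $\|v_c'\|_{L^\infty}\cdot\delta = o(1)$ as $c\to\infty$ rather than $O(1)$. Writing $v_c(x_2)= c\,\chi((1-|x_2|)/\delta)$ gives $\|v_c'\|_\infty\sim c\delta^{-1}$, so we instead need a \emph{gentler} profile near the boundary, e.g.\ $v_c$ with $v_c(x_2)\sim c$ but $|v_c'(x_2)|\lesssim \eta\,(1-|x_2|)^{-1}$ near $x_2=\pm1$ with $\eta = \eta(c)\to 0$; for such a profile the Hardy inequality $\int |v_c'|\,|w_1||w_2|\le C\eta\,\|w_1\|_{L^2}\|\partial_{x_2}w\|_{L^2}$ holds with small constant, delivering $(L_c w,w)\ge (1-C\eta)\|w\|_{H^1}^2 - C\|w\|_{L^2}^2 \ge \tfrac12\|w\|_{H^1}^2$ after the Poincar\'e inequality. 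One then checks that the scalings $\|v_c\|_{L^\infty}\sim c$, $\|v_c'\|_{L^2}\sim c^{3/2}$, $\|v_c''\|_{L^{3/2}}\sim c^{7/3}$ are compatible with such a profile by tuning the small parameter $\eta$ (equivalently the layer width) as a suitable negative power of $c$; the three exponents $1$, $3/2$, $7/3$ are precisely the ones for which a single one-parameter family of profiles realizes all three. The membership $V_c\in H^2\cap H^1_0$ is immediate once $v_c$ is chosen $C^\infty$ with $v_c(\pm1)=0$. I would present the explicit $v_c$, verify the three asymptotics by direct computation, and then give the two-line integration-by-parts plus Hardy argument for \eqref{3.positive}.
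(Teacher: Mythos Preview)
Your construction of the boundary-layer profile with width $\delta\sim c^{-1}$ is exactly what the paper does (the paper writes down an explicit piecewise-quadratic $v_c$ with plateau value $a\sim c$ and quadratic drop on $(1-\delta,1)$, then refers to \cite{ZelikGlasgow} for details). Your scaling computations are correct, and you are right that the displayed $L_c w$ has a typo: the intended operator is $L_c w=-\Delta_x w+(V_c,\nabla_x)w+(w,\nabla_x)V_c$, as is clear from how it is used in \eqref{3.lcgood}.

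The genuine gap is in your coercivity step. You apply the boundary-layer Poincar\'e estimate only to $w_2$, obtaining $|w_2|\le \delta^{1/2}\|\partial_{x_2}w_2\|_{L^2_{x_2}}$ on the layer, and then bound $w_1$ by its full $L^2$-norm. This throws away a factor of $\delta$: since $w\in[H^1_0(\Omega)]^2$, \emph{both} components vanish on $\partial\Omega$, so the same layer estimate applies to $w_1$. Concretely, for each layer one has
\[
\|w_i\|_{L^2_{x_2}(1-\delta,1)}\le \tfrac{\delta}{\sqrt 2}\,\|\partial_{x_2}w_i\|_{L^2_{x_2}(1-\delta,1)},\qquad i=1,2,
\]
and hence, with $\|v_c'\|_{L^\infty}=2a\delta^{-1}$,
\[
\Bigl|\int_\Omega v_c'(x_2)\,w_1w_2\,dx\Bigr|\le 2a\delta^{-1}\cdot\frac{\delta^2}{2}\,\|\partial_{x_2}w\|_{L^2(\Omega)}^2=\frac{a\delta}{2}\,\|\nabla_x w\|_{L^2}^2.
\]
With $a\sim c$ and $\delta=\lambda c^{-1}$ for a fixed $\lambda<2$ (which does not alter any of the scalings in \eqref{3.c.est}), the coefficient $a\delta/2\to\lambda/2<1$, and $(L_c w,w)=\|\nabla_x w\|_{L^2}^2+\int v_c'w_1w_2\ge(1-\lambda/2)\|\nabla_x w\|_{L^2}^2\ge\kappa\|w\|_{H^1}^2$ by Poincar\'e. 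So the ``crude'' layer profile already gives \eqref{3.positive} with $\kappa$ independent of $c$; the search for a gentler profile with $|v_c'|\lesssim\eta(1-|x_2|)^{-1}$ is unnecessary (and would in any case conflict with the $H^2$-regularity you need). A minor slip: $(w,\nabla_x)V_c=(w_2 v_c'(x_2),0)^t$, not $(w_1 v_c',0)^t$, though the resulting bilinear term $\int v_c' w_1 w_2$ is the same.
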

Indeed, the function $v_c(x_2)$ can be found in the form
\begin{equation*}
v_c(x)=\begin{cases}
  a,\ |x|\le 1-\delta,\\
a(1-\delta^{-2}(x-1+\delta)^2),\ \ x>1-\delta,\\
a(1-\delta^{-2}(x+1-\delta)^2),\ \ x<-1+\delta,
\end{cases}
\end{equation*}
where the two parameters $a\sim c$, $\delta\sim c^{-1}$ are chosen in such way that \eqref{3.c} is satisfied. Then the straightforward calculations show that the other assumptions of the lemma are also satisfied, see \cite{ZelikGlasgow} for more details.
\par
The next theorem generalizes estimate \eqref{3.0est} for the case of the non-zero flux.

\begin{theorem}\label{Th3.main} Let $u$ be an energy solution of equation \eqref{3.eqmain} with the mean flux $c$. Then, the following estimate holds:
\begin{equation}\label{3.00est}
\|u\|_{L^\infty(0,T;L^2_b)}+\|\Nx u\|_{L^2_b((0,T)\times\Omega)}\le C(1+c^{3/2}+\|u_0\|_{L^2_b}+\|g\|_{L^2_b})^2,
\end{equation}
where the constant $C$ is independent of $u_0$, $g$, $c$, $u$ and $T$.
\end{theorem}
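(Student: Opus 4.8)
The plan is to reduce Theorem~\ref{Th3.main} to the already-proven zero-flux estimate \eqref{3.0est} via the substitution $\bar u:=u-V_c$, where $V_c$ is the Poiseuille-type profile provided by Lemma~\ref{Lem3.flux}. Since $\bar u$ has zero flux and solves the perturbed equation \eqref{3.eqflux}, the task is to run the weighted energy argument of Theorem~\ref{Th3.0flux} for \eqref{3.eqflux}, treating the extra linear terms $(V_c,\Nx)\bar u+(\bar u,\Nx)V_c$ carefully and absorbing the new forcing $g+\Dx V_c$. First I would multiply \eqref{3.eqflux} (formally, then justified via Corollary~\ref{Cor2.eneq} exactly as before, since $\bar g:=g+\Dx V_c-(\bar u,\Nx)\bar u-(V_c,\Nx)\bar u-(\bar u,\Nx)V_c$ still satisfies \eqref{2.gas} after noting $\|\Dx V_c\|$ is a fixed $c$-dependent constant and the linear terms are controlled by $\|V_c\|_{L^\infty}$, $\|v_c'\|_{L^2}$) by $\theta^2\bar u-v_\theta$, obtaining the analogue of the energy identity \eqref{3.energy}.

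The key quantitative point is the treatment of the linearized operator term $(L_c\bar u,\theta^2\bar u)$. By the energy-stability estimate \eqref{3.positive} one has $(L_c\bar u,\bar u)\ge\kappa\|\bar u\|_{H^1}^2$ in the non-weighted case; passing to the weight $\theta^2=\theta_{\eb,s}^2$ produces only commutator terms of the form $\int\theta\theta' (\cdots)$, each carrying a factor $\eb$ by \eqref{1.thetaeb}. Concretely $(L_c\bar u,\theta^2\bar u)\ge\kappa\|\bar u\|^2_{W^{1,2}_\theta}-C\eb(1+\|v_c\|_{L^\infty}+\|v_c'\|_{L^2})(\|\bar u\|^2_{W^{1,2}_\theta}+\|\bar u\|^2_{L^2_\theta})$, so for $\eb$ small depending on $c$ the good term survives. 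Similarly $(L_c\bar u,v_\theta)$ is bounded using \eqref{2.aumain} and the weighted Ladyzhenskaya inequality by $C\eb\,(1+\|v_c\|_{L^\infty})\|\bar u\|_{C(0,T;L^2_\theta)}\|\bar u\|_{W^{1,2}_\theta}$, again a small term. The nonlinear terms $((\bar u,\Nx)\bar u,\theta^2\bar u)$ and $((\bar u,\Nx)\bar u,v_\theta)$ are handled verbatim as in \eqref{3.inest1}--\eqref{3.inest2}. Collecting everything yields the weighted differential inequality
\begin{equation*}
\frac d{dt}\(\tfrac12\|\bar u\|^2_{L^2_\theta}-(\bar u,v_\theta)\)+\alpha\|\bar u\|^2_{L^2_\theta}+\alpha\|\Nx\bar u\|^2_{L^2_\theta}\(1-C\eb\|\bar u\|_{C(0,T;L^2_\theta)}-C\eb P(c)\)\le C\|g\|^2_{L^2_\theta}+C\eb P(c)\|\bar u\|^2_{C(0,T;L^2_\theta)}+C\|\Dx V_c\|^2_{L^2_\theta},
\end{equation*}
where $P(c)$ collects the $c$-dependent norms of $v_c$, $v_c'$ from \eqref{3.c.est}.

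Then I would run the bootstrap of Theorem~\ref{Th3.0flux}: assume \eqref{3.loop}-type smallness $C\eb(\|\bar u\|_{C(0,T;L^2_\theta)}+P(c))\le 1/2$, apply Gronwall plus \eqref{3.smallcor}, use \eqref{1.wbeb} to get $\|\bar u\|_{C(0,T;L^2_\theta)}\le C\eb^{-1/2}(1+\|g\|_{L^2_b}+\|\bar u_0\|_{\Cal H_b}+\|\Dx V_c\|_{L^2_b})$, and finally close the loop by fixing
\begin{equation*}
\eb:=C_2(1+c^{7/3}+\|g\|_{L^2_b}+\|u_0\|_{\Cal H_b})^{-2},
\end{equation*}
the power $7/3$ coming from the largest of the norms in \eqref{3.c.est} (namely $\|v_c''\|_{L^{3/2}}\sim c^{7/3}$, which controls $\Dx V_c$ through the $L^1_b(L^{3/2}_b)$ part of \eqref{2.gas}), while $P(c)\sim c^{3/2}$ from $\|v_c'\|_{L^2}$. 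Unwinding via \eqref{1.wb} over $s\in\R$ gives the estimate for $\bar u$ in uniformly local spaces with constant polynomial in $c$; adding back $\|V_c\|_{L^2_b}\sim c$ and $\|\Nx V_c\|_{L^2_b}\sim c^{3/2}$ recovers \eqref{3.00est}. I expect the main obstacle to be the careful bookkeeping of which power of $c$ enters $\eb$: one must check that the smallness condition absorbing both the cubic self-interaction and the linear $L_c$-perturbation can be met simultaneously with a single $\eb$, and that the resulting constant in \eqref{3.00est} is genuinely polynomial of degree matching $(1+c^{3/2}+\cdots)^2$ and not worse — this forces using the sharp asymptotics \eqref{3.c.est} rather than crude bounds, and using energy stability \eqref{3.positive} (not just sign considerations) to avoid losing a power of $c$ in the good $H^1$ term.
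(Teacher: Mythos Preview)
Your overall plan coincides with the paper's proof: reduce to zero flux via $\bar u=u-V_c$, write the weighted energy identity for \eqref{3.eqflux}, use the energy stability \eqref{3.positive} to control $(L_c\bar u,\theta^2\bar u)$, treat the nonlinear terms as in \eqref{3.inest1}--\eqref{3.inest2}, and close a bootstrap on $\eb$. The structural steps are fine.

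The gap is in the $c$-bookkeeping, and it is not cosmetic: as written, your argument does \emph{not} yield the stated power $(1+c^{3/2}+\cdots)^2$. Two places go wrong. First, your displayed inequality carries $C\|\Dx V_c\|^2_{L^2_\theta}$ on the right; but $\|v_c''\|_{L^2}\sim c^{5/2}$, so this term is $\sim\eb^{-1}c^5$ and forces the final bound to be at least $(1+c^{5/2}+\cdots)^2$. The paper avoids this by integrating by parts in $(\Dx V_c,\theta^2\bar u)=-(\Nx V_c,\Nx(\theta^2\bar u))$ and using only $\|\Nx V_c\|^2_{L^2_\theta}\sim\eb^{-1}c^3$; see \eqref{3.dvc}. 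Second, for the cross term $(\Dx V_c,v_\theta)$ the paper pairs $L^{3/2}_\theta$ with $L^3_{\theta^{-1}}$ and crucially invokes the smallness condition $\eb(1+c^3)\le\mu$ (not merely $\eb\,c^{3/2}$ small): one gets a factor $\eb^{1/3}(1+c^{7/3})$, and $(1+c^3)^{-5/6}(1+c^{7/3})$ is \emph{bounded} as $c\to\infty$, so the $c^{7/3}$ disappears and only $C\eb^{-1}+\mu\|\bar u\|^2_{C(0,T;L^2_\theta)}$ survives, see \eqref{3.dvc1}. With these two moves the right-hand side of the weighted inequality is $C\eb^{-1}(1+c^3)+C\|g\|^2_{L^2_\theta}$, which after Gronwall and \eqref{1.wbeb} gives $\|\bar u\|_{C(0,T;L^2_\theta)}\le C\eb^{-1/2}(1+c^{3/2}+\|u_0\|_{L^2_b}+\|g\|_{L^2_b})$ and permits the choice $\eb=\mu(1+c^{3/2}+\|u_0\|_{L^2_b}+\|g\|_{L^2_b})^{-2}$; this $\eb$ simultaneously satisfies $\eb(1+c^3)\le\mu$ and the bootstrap condition \eqref{3.extra3}. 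Your proposed choice $\eb\sim(1+c^{7/3}+\cdots)^{-2}$ would instead deliver $(1+c^{7/3}+\cdots)^2$, strictly weaker than \eqref{3.00est}. So the missing ideas are precisely the integration by parts in the $\Dx V_c$--$\theta^2\bar u$ pairing and the use of the stronger smallness $\eb(1+c^3)\le\mu$ to neutralize $\|v_c''\|_{L^{3/2}}\sim c^{7/3}$ in the $\Dx V_c$--$v_\theta$ pairing.
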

\begin{proof} Applying the weighted energy identity to equation \eqref{3.eqflux} (where all terms $(\bar u,\Nx)\bar u$, $(V_c,\Nx)\bar u$ and $(\bar u,\Nx)V_c$ are treated as external forces), analogously to \eqref{3.energy}, we have
\begin{multline}\label{3.enflux}
\frac {d}{dt}\(\frac12\|\bar u\|^2_{L^2_\theta}-(\bar u,v_\theta)\)+(L_c \bar u,\theta^2\bar u)=\\=(g+\Dx V_c-(\bar u,\Nx)\bar u,\theta^2\bar u-v_\theta)+((V_c,\Nx)\bar u+(\bar u,\Nx)V_c,v_\theta).
\end{multline}
Thus, we only need to estimate the extra terms appearing in this identity due to the presence of $V_c$. To do that, we assume that $\eb>0$ is small enough to satisfy
\begin{equation}\label{3.ebs}
\eb(1+c^3)\le \mu\ll1.
\end{equation}
Then, the term involving the operator $L_c$ can be estimated using \eqref{3.positive} and \eqref{3.c.est}:
\begin{multline}\label{3.lcgood}
(L_c\bar u,\theta^2\bar u)=(L_c(\theta\bar u),\theta\bar u)-([\theta']^2, |\bar u|^2)-(v_c,\theta'\theta|\bar u|^2)\ge\\\ge  \kappa\|\bar u\|^2_{W^{1,2}_\theta}-C\eb^2\|\bar u\|^2_{L^2_\theta}-Cc\eb\|\bar u\|^2_{L^2_\theta}\ge \frac\kappa2\|\bar u\|^2_{W^{1,2}_\theta}.
\end{multline}
The last term into the right-hand side of \eqref{3.enflux} can be estimated using \eqref{3.c.est} and \eqref{2.aumain}:
\begin{multline}\label{3.lcgood1}
|((V_c,\Nx)\bar u+(\bar u,\Nx)V_c,v_\theta)|\le C(\|V_c\|_{L^6_\theta}\|\Nx\bar u\|_{L^2_{\theta}}+\|\bar u\|_{L^6_\theta}\|\Nx\bar V_c\|_{L^2_{\theta}})\|v_\theta\|_{L^3_{\theta^{-2}}}\le\\\le C\eb\|V_c\|_{W^{1,2}_\theta}\|\bar u\|_{W^{1,2}_\theta}\|\bar u\|_{C(0,T;L^2_\theta)}\le C\eb^{1/2}\|V_c\|_{W^{1,2}_b}\|\bar u\|_{W^{1,2}_\theta}\|\bar u\|_{C(0,T;L^2_\theta)}\le\\\le C\eb^{1/2}(1+c^3)^{1/2}\|\bar u\|_{W^{1,2}_\theta}\|\bar u\|_{C(0,T;L^2_\theta)}\le \frac\kappa{16}\|\bar u\|^2_{W^{1,2}_\theta}+C\mu\|\bar u\|_{C(0,T;L^2_\theta)}^2.
\end{multline}
Finally, the terms involving $\Dx V_c$ can be estimated as follows:
\begin{multline}\label{3.dvc}
|(\Dx V_c,\theta^2\bar u)|=|(\Nx V_c,\Nx(\theta^2\bar u))|\le C\|\Nx V_c\|_{L^2_\theta}\|\Nx \bar u\|_{L^2_\theta}-C\eb\|\Nx V_c\|_{L^2_\theta}\|\bar u\|_{L^2_\theta}\le\\\le C\|\Nx V_c\|^2_{L^2_\theta}+\frac\kappa{16}\|\bar u\|^2_{W^{1,2}_\theta}\le C\eb^{-1}(1+c^3)+\frac\kappa{16}\|\bar u\|^2_{W^{1,2}_\theta}.
\end{multline}
and
\begin{multline}\label{3.dvc1}
|(\Dx V_c,v_\theta)|\le \|\Dx V_c\|_{L^{3/2}_\theta}\|v_\theta\|_{L^3_{\theta^{-1}}}\le C\eb\eb^{-2/3}\|\Dx V_c\|_{L^{3/2}_b}\|\bar u\|_{C(0,T;L^2_\theta)}\le\\\le C\eb^{-1/2}\mu^{5/6}(1+c^3)^{-5/6}(1+c^{7/3})\|\bar u\|_{C(0,T;L^2_\theta)}\le C\eb^{-1}+\mu\|\bar u\|_{C(0,T;L^2_\theta)}^2.
\end{multline}
Inserting estimates \eqref{3.lcgood}, \eqref{3.lcgood1}, \eqref{3.dvc} and \eqref{3.dvc1} into the identity \eqref{3.enflux} and estimating the nonlinear term by \eqref{3.inest1} and \eqref{3.inest2}, we derive the following analogue of \eqref{3.wenest}:
\begin{multline}\label{3.wenestflux}
\frac d{dt}\(\frac12\|\bar u\|^2_{L^2_\theta}-(\bar u,v_\theta)\)+\alpha\|\bar u\|^2_{L^2_\theta}+\alpha\|\bar u\|^2_{W^{1,2}_\theta}(1-C\eb\|\bar u\|_{C(0,T;L^2_\theta)})\le\\\le C_1\(\eb^{-1}(c^3+1)+\|\bar u_0\|_{L^2_\theta}^2+\|g\|^2_{L^2_\theta}\)+C_1\mu\|\bar u\|^2_{C(0,T;L^2_\theta)},
\end{multline}
where the positive constants $C$, $C_1$, $\alpha$ and $\mu\ll1$ are independent of $\eb$ and $T$. The rest of the proof repeats word by word the end of the proof of Theorem \ref{Th3.0flux}. Indeed, under the extra assumption that
\begin{equation}\label{3.extra3}
C\eb\|\bar u\|_{C(0,T;L^2_\theta)}\le\frac12,
\end{equation}
estimate \eqref{3.wenestflux} implies that
\begin{equation}\label{3.est123}
\|\bar u\|_{C(0,T;L^2_\theta)}\le C_2\eb^{-1/2}\(1+c^{3/2}+\|u_0\|_{L^2_b}+\|g\|_{L^2_b}\),
\end{equation}
see the derivation of \eqref{3.uniform}. Thus, if we fix
\begin{equation}\label{3.ebfin}
\eb:=\mu\(1+c^{3/2}+\|u_0\|_{L^2_b}+\|g\|_{L^2_b}\)^{-2},
\end{equation}
where $\mu>0$ is small enough, then both assumptions \eqref{3.ebs} and \eqref{3.extra3} will be satisfied and, therefore, \eqref{3.est123} is justified. Then, integrating \eqref{3.wenestflux} in time and using \eqref{3.est123} together with \eqref{3.ebfin}, we end up with the analogue of \eqref{3.uniform1}:
 \begin{multline}\label{3.uniform2}
\|\bar u\|_{C(0,T;L^2_\theta)}^2+\int_t^{t+1}\|\Nx \bar u(s)\|^2_{L^2_\theta}\,ds\le\\\le C\eb^{-1}\(1+c^{3/2}+\|g\|_{L^2_b}+\|u_0\|_{L^2_b}\)^2\le C\(1+c^{3/2}+\|g\|_{L^2_b}+\|u_0\|_{L^2b}\)^4.
\end{multline}
which implies the desired estimate \eqref{3.00est} and finishes the proof of the theorem.
\end{proof}

\section{The Navier-Stokes problem: existence, uniqueness and regularity of solutions}\label{s4}
In this section, we show the well-poseness of the Navier-Stokes problem \eqref{3.eqmain} in the uniformly local spaces. We start with the uniqueness result.

\begin{theorem}\label{Th4.unique} Let $u^{(1)}$ and $u^{(2)}$ be two energy solutions of problem \eqref{3.eqmain} which satisfy \eqref{3.flux} with the same constant $c$. Then, the following estimate holds:
\begin{equation}\label{4.unloc}
\|u^{(1)}-u^{(2)}\|_{L^\infty(0,T;L^2_b)}+\|\Nx u^{(1)}-\Nx u^{(2)}\|_{L^2_b((0,T)\times\Omega)}\le C_T\|u^{(1)}(0)-u^{(2)}(0)\|_{L^2_b},
\end{equation}
where the constant $C_T$ depends only on $T$ and on the uniformly local energy norms of $u^{(1)}$ and $u^{(2)}$.
\end{theorem}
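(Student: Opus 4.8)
The plan is to derive the equation for the difference $w := u^{(1)} - u^{(2)}$, apply the weighted energy machinery of Section \ref{s3} with a \emph{cut-off} weight of finite support (as permitted by Remark \ref{Rem2.rem}), carefully estimate the cross terms coming from the quadratic nonlinearity, and close a Gronwall argument. First I would subtract the two equations \eqref{3.eqmain}: writing $w = u^{(1)} - u^{(2)}$ and $p = p^{(1)} - p^{(2)}$, the function $w$ solves
\begin{equation}\label{4.diff}
\Dt w + (u^{(1)},\Nx)w + (w,\Nx)u^{(2)} - \Dx w + \Nx p = 0,\quad \divv w = 0,\quad w\big|_{\partial\Omega}=0,\quad \Bbb S w_1 \equiv 0,
\end{equation}
with $w(0) = u^{(1)}(0) - u^{(2)}(0)$. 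Note the flux of $w$ is zero because the two solutions share the same $c$, so $w$ is genuinely in the class $\Cal H_b$ without any Poiseuille correction. The natural test object is $\varphi^2 w - v_\varphi$, where $\varphi$ is a cut-off weight of the type $\theta_{\eb,s}$ truncated to finite support and $v_\varphi$ is the corrector solving \eqref{2.au} with $u$ replaced by $w$; by Remark \ref{Rem2.rem} the energy identity \eqref{2.energy} (equivalently \eqref{3.energy}) remains valid in this setting, so we obtain
\begin{equation}\label{4.diffen}
\frac{d}{dt}\Big(\frac12\|w\|^2_{L^2_\varphi} - (w,v_\varphi)\Big) + (\Nx w, \Nx(\varphi^2 w)) = -\big((u^{(1)},\Nx)w + (w,\Nx)u^{(2)},\ \varphi^2 w - v_\varphi\big).
\end{equation}

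The heart of the argument is estimating the right-hand side of \eqref{4.diffen}. The term $((u^{(1)},\Nx)w, \varphi^2 w)$ is delicate since it is not of the nice cancelling form: integrating by parts and using $\divv u^{(1)} = 0$ gives $((u^{(1)},\Nx)w, \varphi^2 w) = -(u^{(1)}_1 \varphi\varphi', |w|^2)$, which is controlled by $C\eb\|u^{(1)}\|_{L^\infty_b}\|w\|_{L^2_\varphi}\|\Nx w\|_{L^2_\varphi}$ via property \eqref{1.thetaeb} and the weighted Ladyzhenskaya inequality \eqref{2.lady} — but now $\eb$ cannot be chosen small depending on $w$ (we want a \emph{linear} estimate in $w(0)$), only depending on the fixed uniformly local norms of $u^{(1)},u^{(2)}$, which is allowed since those are treated as given data. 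The term $((w,\Nx)u^{(2)},\varphi^2 w)$ is estimated directly by $\|w\|_{L^4_\varphi}^2\|\Nx u^{(2)}\|_{L^2_b}$ absorbed via \eqref{2.lady} and Young's inequality into $\tfrac14\|\Nx w\|^2_{L^2_\varphi}$ plus $C(1 + \|\Nx u^{(2)}\|_{L^2_b}^2)\|w\|^2_{L^2_\varphi}$ — here one uses that $\|\Nx u^{(2)}(t)\|_{L^2_b}^2$ is locally integrable in $t$, which is exactly the regularity $\Nx u^{(2)} \in L^2_b$ guaranteed by \eqref{2.defspace}. The corrector terms $((u^{(1)},\Nx)w,v_\varphi)$ and $((w,\Nx)u^{(2)},v_\varphi)$ are handled as in \eqref{3.inest2}: by Hölder with the exponents $6,2,3$, the bound \eqref{2.aumain} on $v_\varphi$ (which gives $\|v_\varphi\|_{L^3_{\varphi^{-2}}} \le C\eb\|w\|_{C(0,T;L^2_\varphi)}$), the Sobolev embedding $W^{1,2}\subset L^6$, and the uniformly local bounds on $u^{(1)},u^{(2)}$; again $\eb$ is fixed small in terms of these data norms, not in terms of $w$.

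After these estimates, combining with the coercivity \eqref{2.nx}, $(\Nx w,\Nx(\varphi^2 w)) \ge \tfrac14\|w\|^2_{W^{1,2}_\varphi}$, and absorbing all the $\|\Nx w\|^2_{L^2_\varphi}$ contributions into the left side, \eqref{4.diffen} reduces to a differential inequality of the form
\begin{equation}\label{4.gron}
\frac{d}{dt}\Big(\frac12\|w\|^2_{L^2_\varphi} - (w,v_\varphi)\Big) + \frac18\|\Nx w\|^2_{L^2_\varphi} \le K(t)\,\|w\|^2_{C(0,t;L^2_\varphi)},
\end{equation}
where $K(t)$ is locally integrable and depends only on $T$ and the uniformly local norms of $u^{(1)},u^{(2)}$ (through $\|\Nx u^{(j)}\|^2_{L^2_b}$ and the chosen small $\eb$). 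Since $|(w,v_\varphi)| \le C\eb\|w\|^2_{C(0,T;L^2_\varphi)}$ by \eqref{3.smallcor}, the quantity $\tfrac12\|w\|^2_{L^2_\varphi} - (w,v_\varphi)$ is equivalent to $\|w\|^2_{L^2_\varphi}$ up to a factor close to $\tfrac12$ for small $\eb$; a Gronwall-type argument on $t\mapsto \|w\|^2_{C(0,t;L^2_\varphi)}$ then yields $\|w\|^2_{C(0,T;L^2_\varphi)} + \|\Nx w\|^2_{L^2(0,T;L^2_\varphi)} \le C_T\|w(0)\|^2_{L^2_\varphi}$. Finally, since the estimate is uniform in the translation parameter $s$ of the cut-off weight $\varphi = \varphi_s$, taking the supremum over $s\in\R$ and invoking Proposition \ref{Prop1.wb} (relation \eqref{1.wb}) converts the weighted bound into the uniformly local bound \eqref{4.unloc}. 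The main obstacle I anticipate is the bookkeeping around the first nonlinear term $((u^{(1)},\Nx)w,\varphi^2 w)$: one must be sure that the $\eb$ arising there can indeed be chosen purely in terms of the data norms of the two given solutions and does not secretly need to depend on $\|w\|$, which would break the linear (and hence uniqueness-implying) nature of \eqref{4.unloc}; this is resolved precisely because $u^{(1)}$ enters only through its fixed $L^\infty_b$ and $L^2_b$-gradient norms, exactly the point where the availability of the cut-off (finite-support) weight from Remark \ref{Rem2.rem} is essential, since it sidesteps the missing $L^2(0,T;W^{1,2}_{\phi^{-1}})$-control of $v_\varphi$.
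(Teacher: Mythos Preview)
Your general strategy is sound, but the sketch conflates two incompatible choices of weight and this creates a genuine gap. If $\varphi$ is a compactly supported cut-off (as you write, invoking Remark~\ref{Rem2.rem}), then neither property \eqref{1.thetaeb} nor the $\eb$-prefactor in Corollary~\ref{Cor2.aumain} is available: the paper's own estimate \eqref{4.aucut} for the cut-off corrector reads $\|v_\psi\|_{L^3_{\theta^{-2}}}\le C\|w\|_{C(0,T;L^2_\theta)}$ with \emph{no} small factor, and the cut-off derivative $\psi'$ is of order one, not $\eb$. So your mechanism of absorbing the corrector terms and the term $-(u^{(1)}_1\varphi\varphi',|w|^2)$ by taking $\eb$ small simply does not apply. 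On the other hand, if you use the pure weight $\varphi=\theta_{\eb,s}$ (for which \eqref{1.thetaeb} and \eqref{2.aumain} do hold), then the term $((w,\Nx)u^{(2)},\theta^2 w)$ cannot be bounded as you claim: the total weight $\theta^2$ must be shared among two copies of $w$ and one of $\Nx u^{(2)}$, and any split that puts enough weight on $\Nx u^{(2)}$ to make its norm finite leaves at least one copy of $w$ in a space like $L^4_{\theta^{1/2}}$, which is \emph{not} controlled by $\|w\|_{W^{1,2}_\theta}$ (since $\theta^{1/2}\notin L^2(\R)$). A related gap: your Gronwall coefficient $K(t)\sim\|\Nx u^{(j)}(t)\|_{L^2_b}^2$ is not known to be integrable in $t$; the hypothesis $\Nx u^{(j)}\in L^2_b((0,T)\times\Omega)$ gives only $\sup_s\int_0^T\|\Nx u^{(j)}(t)\|^2_{L^2(\Omega_s)}\,dt<\infty$, and supremum and time-integral do not commute. (Incidentally, $\|u^{(1)}\|_{L^\infty_b}$ is also unavailable for energy solutions.)

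The paper's proof addresses precisely these points by an additional layer you are missing. It uses a carefully designed cut-off $\psi_s$ supported in $(s-1,s+2)$ with $(\psi^2)''\le\mu$ (at the price of $\|\psi'\|_{L^\infty}\sim C_\mu$), keeps the Gronwall coefficient \emph{local}, namely $\|\Nx u^{(j)}(t)\|^2_{L^2(\Omega_{s-1,s+2})}$, which \emph{is} integrable in $t$ for each fixed $s$, and compensates for the absent $\eb$-smallness of the corrector via a separate Young parameter $\mu$. After this first Gronwall (for fixed $s$) one multiplies the resulting inequality by $\theta_{\eb,\tau}(s)$, integrates over $s\in\R$ using \eqref{1.wequiv} and \eqref{4.obvious}, performs a second Gronwall, and closes by a bootstrap on the auxiliary quantity $Y_\tau(w,T):=\int_\R\theta_{\eb,\tau}(s)\|w\|^2_{C(0,T;L^2_{\theta_{\eb,s}})}\,ds$, choosing first $\mu$ and then $T$ small. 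This two-stage structure is essential, not decorative: it is exactly what converts the local-in-$s$ integrability of $\|\Nx u^{(j)}(t)\|^2_{L^2(\Omega_s)}$ into a uniformly-local estimate.
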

\begin{proof} Let $u(t)=u^{(1)}(t)-u^{(2)}(t)$. Then, this function solves
\begin{equation}\label{4.dif}
\Dt u-\Dx u+\Nx p+(u^{(2)},\Nx)u+(u,\Nx)u^{(1)}=0,\ \divv u=0,\ u\big|_{t=0}=u^{(1)}(0)-u^{(2)}(0),\ \Bbb Su_1=0.
\end{equation}
Let us also introduce the cut-off function $\psi\in C^\infty_0(\R)$ such that
$$
1.\ \ \psi(x_1)=1,\ \ x_1\in(0,1),\ \ 2. \ \ \psi(x_1)=0,\ \ x_1\notin(-1,2)
$$
and let $\psi_s(x):=\psi(x-s)$. Moreover, we assume that this cut off function depends on a small parameter $\mu>0$ in such way that
$$
3. \ \ (\psi^2)''\le \mu,\ \ \ 4.\ \ \|\psi\|_{L^\infty}+\|\psi\psi'\|_{L^\infty}\le C,
$$
where $C$ is independent of $\mu$. It is not difficult to see that such a cut off function exists, however, the $L^\infty$-norm of its derivative must grow as $\mu\to0$:
$$
\|\psi'\|_{L^\infty}\le C_\mu.
$$
We write down the energy identity \eqref{2.energy} with the weight $\phi$ replaced by the cut off function $\psi_s$, see Remark \ref{Rem2.rem}:
\begin{equation}\label{4.cut}
\frac d{dt}\(\frac12\|u\|^2_{L^2_{\psi_s}}-(v_\psi,u)\)+(\Nx u,\Nx(\psi_s^2u))=-((u^{(2)},\Nx)u+(u,\Nx)u^{(1)},\psi_s^2u-v_\psi),
\end{equation}
where the corrector $v_\psi$ solves \eqref{2.au} (where $\phi$ is replaced by $\psi_s$) and, since, obviously,
$$
|\psi_s'(x)\psi_s(x)|\le C\theta_{\eb,s}(x)^2
$$
uniformly with respect to $s$,
 due to Theorem \ref{Th2.au}, the corrector  $v_\psi$ satisfies the analogue of
\eqref{2.aumain}:
\begin{equation}\label{4.aucut}
\|v_\psi\|_{L^2(0,T;L^2_{\theta^{-2}_{\eb,s}})}+\|v_\psi\|_{L^2(0,T;L^3_{\theta^{-2}_{\eb,s}})}\le C\|u\|_{C(0,T;L^2_{\theta_{\eb,s}})},
\end{equation}
where $\eb>0$ is small enough and the constant $C$ is independent of $s\in\R$. To simplify the notations, we will write below $\theta$ and $\psi$ instead of $\theta_{\eb,s}$ and $\psi_s$.
\par
Our task now is to estimate every term in \eqref{4.cut}. First, using that $\psi_s$ is identically zero outside of $(s-1,s+2)$ and denoting $\Omega_{s-1,s+2}:=(s-1,s+2)\times\Omega$, we have
\begin{multline}\label{4.nxpsi}
(\Nx u,\Nx(\psi^2 u))\ge\|\Nx(\psi u)\|^2_{L^2}-C\|\Nx(\psi u)\|_{L^2}\|\psi'u\|_{L^2}-C\|\psi'u\|^2_{L^2}\ge \frac12\|\Nx(\psi u)\|^2_{L^2}-\\-
C_\mu\|u\|^2_{L^2(\Omega_{s-1,s+2})}\ge \frac12\|\Nx u\|^2_{L^2(\Omega_s)}-C_\mu\|u\|^2_{L^2(\Omega_{s-1,s+2})}.
\end{multline}
Next, using the Ladyzhenskaya and H\"older inequality,
\begin{multline}\label{4.non1}
|(u,\Nx) u^{(1)},\psi^2 u)|\le \|\Nx u^{(1)}\|_{L^2(\Omega_{s-1,s+2})}\|\psi u\|^2_{L^4}\le\\\le C\|\Nx u^{(1)}\|_{L^2(\Omega_{s-1,s+2})}\|\psi u\|_{L^2}\|\Nx(\psi u)\|_{L^2}\le\frac1{16}\|\Nx(\psi u)\|^2_{L^2}+C\|\Nx u^{(1)}\|^2_{L^2(\Omega_{s-1,s+2})}\|\psi u\|^2_{L^2}.
\end{multline}
Integrating by parts and arguing analogously, we also have
\begin{equation}\label{4.non2}
|(u^{(2)},\Nx)u,\psi^2 u)|\le C(|u^{(2)}|,|\psi u|\cdot|\psi'u|)
\end{equation}
and, due to the H\"older inequality,
\begin{multline}
C(|u^{(2)}|,|\psi u|\cdot|\psi'u|)\le C\|u^{(2)}\|_{L^4(\Omega_{s-1,s+2})}\|\psi u\|_{L^4}\|\psi'u\|_{L^2}\le\\\le C_\mu\|u^{(2)}\|_{W^{1,2}(\Omega_{s-1,s+2})}^{1/2}\|\psi u\|_{L^2}^{1/2}\|\Nx(\psi u)\|_{L^2}^{1/2}\|u\|_{L^2(\Omega_{s-1,s+2})}\le \frac1{16}\|\Nx(\psi u)\|^2_{L^2}+\\+C\|\Nx u^{(2)}\|^2_{L^2(\Omega_{s-1,s+2})}\|\psi u\|^2_{L^2}+C_\mu\|u\|^2_{L^2(\Omega_{s-1,s+2})},
\end{multline}
where we have implicitly used the Ladyzhenskaya inequality together with the fact that the uniformly local  $L^2$-norm of $u^{(2)}$ is under the control due to the energy estimate. Combining the obtained estimates, we get
\begin{multline}\label{4.non-main}
|((u^{(2)},\Nx)u+(u,\Nx)u^{(1)},\psi_s^2u)|\le \frac3{16}\|\Nx(\psi u)\|^2_{L^2}+\\+C(\|\Nx u^{(1)}\|^2_{L^2(\Omega_{s-1,s+2})}+\|\Nx u^2\|^2_{L^2(\Omega_{s-1,s+2})})\|\psi u\|^2_{L^2}+C_\mu\|u\|^2_{L^2(\Omega_{s-1,s+2})},
\end{multline}
where the constant $C$ is independent of $s$ and $T$ and the constant $C_\mu$ depends only on the small parameter $\mu$ involved in the weight function $\psi$. It only remains to estimate the terms containing the corrector $v_\psi$. Using the weighted interpolation and  H\"older inequalities together with estimate \eqref{4.aucut}, we have
\begin{multline}\label{4.non-main1}
|(u,\Nx)u^{(1)},v_\psi)|\le C(\theta|u|\cdot\theta|\Nx u^{(1)}|,\theta^{-2}|v_\psi|)\le\\ \|u\|_{L^6_{\theta}}\|\Nx u^{(1)}\|_{L^2_{\theta}}\|v_\psi\|_{L^3_{\theta^{-2}}}\le \mu\|\Nx u^{(1)}\|^2_{L^2_\theta}\|u\|_{C(0,T;L^2_\theta)}^2+C_\mu\|u\|^2_{L^6_\theta}\le\\\le \mu\|\Nx u^{(1)}\|^2_{L^2_\theta}\|u\|_{C(0,T;L^2_\theta)}^2+\mu\|\Nx u\|^2_{L^2_\theta}+C_\mu\|u\|^2_{L^2_\theta},
\end{multline}
\begin{multline}\label{4.non-main2}
|(u^{(2)},\Nx)u,v_\psi)|\le\|u^{(2)}\|_{L^6_\theta}\|\Nx u\|_{L^2_\theta}\|v_\psi\|_{L^2_{\theta^{-2}}}\le\mu\|\Nx u\|^2_{L^2_\theta}+\\+C_\mu\|u^{(2)}\|_{L^2_{\theta}}^{2/3}\|\Nx u^{(2)}\|_{L^2_\theta}^{4/3}\|u\|_{C(0,T;L^2_\theta)}^2\le \mu \|\Nx u\|^2_{L^2_\theta}+\mu\|\Nx u^{(2)}\|^2_{L^2_\theta}\|u\|_{C(0,T;L^2_\theta)}^2+C_\mu\|u\|^2_{C(0,T;L^2_{\theta})}.
\end{multline}
Integrating now the energy identity \eqref{4.cut} in time $t\in[0,T]$ and using \eqref{4.non-main}, \eqref{4.non-main1},\eqref{4.non-main2} and \eqref{4.aucut} together with \eqref{4.nxpsi}, we end up with
\begin{multline}
\frac12\|\psi u(T)\|^2_{L^2}-(u(T),v_\psi(T))+\alpha \int_0^T\|\Nx u(t)\|_{L^2(\Omega_s)}^2\,dt\le\\\le C\int_0^T(\|\Nx u^{(1)}(t)\|^2_{L^2(\Omega_{s-1,s+2})}+\|\Nx u^{(2)}(t)\|^2_{L^2(\Omega_{s-1,s+2})})\|\psi u(t)\|^2_{L^2}\,dt+\\+\mu \int_0^T\|\Nx u(t)\|^2_{L^2_\theta}\,dt+C_\mu\int_0^T\|u(t)\|^2_{L^2_\theta}\,dt+\\+C\mu\int_0^T(\|\Nx u^{(1)}(t)\|^2_{L^2_\theta}+\|\Nx u^{(2)}(t)\|^2_{L^2_\theta})\,\|u\|^2_{C(0,T;L^2_\theta)}dt+\\+C_\mu T\|u\|^2_{C(0,T;L^2_\theta)}+C_\mu\|\psi u(0)\|^2_{L^2}.
\end{multline}
Now, using that $v_\psi(t)=2\psi\psi'(0,1)^t\Psi_u(T)$, where $\Psi_u$ is a stream function of $u$ and our assumptions
\begin{multline}\label{4.new}
(u(T),v_\psi(T))=(u_2(T),(\psi^2)'\Psi_u(t))=-(\partial_{x_1}\Psi_u(T),(\psi^2)'\Psi_u(T))=\\=(|\Psi_u(T)|^2,(\psi^2)'')\le C\mu\|\Psi_u(T)\|_{L^2(\Omega_{s-1,s+2})}^2\le C\mu\|u\|^2_{C(0,T;L^2_\theta)},
\end{multline}
where we have implicitly used the obvious inequality
$$
\|u\|_{L^2(\Omega_{s-1,s+2})}\le C\|u\|^2_{L^2_{\theta_s}}.
$$
Using now the fact that the $L^2(0,T;L^2_\theta)$-norms of $\Nx u^{(1)}$ and $\Nx u^{(2)}$ are bounded for energy solutions, we see that, say, for $T\le1$,
\begin{multline}
\|\psi u(T)\|^2_{L^2}+\alpha \int_0^T\|\Nx u(t)\|_{L^2(\Omega_s)}^2\,dt\le C\int_0^T(\|\Nx u^{(1)}(t)\|^2_{L^2_\theta}+\|\Nx u^{(2)}(t)\|^2_{L^2_\theta})\|\psi u(t)\|^2_{L^2}\,dt+\\+\mu \int_0^T\|\Nx u(t)\|^2_{L^2_\theta}\,dt+C_\mu\int_0^T\|u(t)\|^2_{L^2_\theta}\,dt+
(C\mu+C_\mu T)\|u\|^2_{C(0,T;L^2_\theta)}+C_\mu\|\psi u(0)\|^2_{L^2}.
\end{multline}
We are now ready to apply the Gronwall inequality with respect to function $T\to\|\psi u(T)\|^2_{L^2_\theta}$. Then, using that the  $L^2(0,T;L^2_\theta)$-norms of $\Nx u^{(1)}$ and $\Nx u^{(2)}$ are bounded, we arrive at
\begin{multline}
\|\psi_s u(T)\|^2_{L^2}+\alpha \int_0^T\|\Nx u(t)\|_{L^2(\Omega_s)}^2\,dt\le C\mu \int_0^T\|\Nx u(t)\|^2_{L^2_{\theta_{\eb,s}}}\,dt+\\+C_\mu\int_0^T\|u(t)\|^2_{L^2_{\theta_{\eb,s}}}\,dt+
(C\mu+C_\mu T)\|u\|^2_{C(0,T;L^2_{\theta_{\eb,s}})}+C_\mu\|\psi_s u(0)\|^2_{L^2}.
\end{multline}
Now, we multiply this inequality by $\theta_{\eb,\tau}(s)$, where $\tau\in\R$ is a parameter, and integrate over $s\in\R$. Then, using \eqref{1.wequiv} together with the obvious inequality
\begin{equation}\label{4.obvious}
\int_{\R}\theta_{\eb,\tau}\theta_{\eb,s}(x)\,ds\le C_\eb\theta_{\eb,\tau}(x),
\end{equation}
see e.g., \cite{ZelJMFM}, we end up with
\begin{multline}
\|u(T)\|^2_{L^2_{\theta_{\eb,\tau}}}+C(\alpha-C\mu) \int_0^T\|\Nx u(t)\|_{L^2_{\theta_{\eb,\tau}}}^2\,dt\le\\\le C_\mu\int_0^T\|u(t)\|^2_{L^2_{\theta_{\eb,\tau}}}\,dt+
(C\mu+C_\mu T)Y_\tau(u,T)+C_\mu\|u(0)\|^2_{L^2_{\theta_{\eb,\tau}}},
\end{multline}
where $Y_\tau(u,T):=\int_{s\in\R}\theta_{\eb,\tau}(s)\|u\|^2_{C(0,T;L^2_{\theta_{\eb,s}})}$. Note that, in contrast to the other term, we cannot simplify $Y_{\tau}(u,T)$ since we cannot change the order of integration and supremum. Assuming that $\mu$ is small enough, we have
\begin{multline}
\|u(T)\|^2_{L^2_{\theta_{\eb,\tau}}}+\alpha_1 \int_0^T\|\Nx u(t)\|_{L^2_{\theta_{\eb,\tau}}}^2\,dt\le\\\le C_\mu\int_0^T\|u(t)\|^2_{L^2_{\theta_{\eb,\tau}}}\,dt+
(C\mu+C_\mu T)Y_\tau(u,T)+C_\mu\|u(0)\|^2_{L^2_{\theta_{\eb,\tau}}}.
\end{multline}
We apply once more the Gronwall inequality, now with respect to function $T\to \|u(T)\|^2_{L^2_{\theta_{\eb,\tau}}}$. Then, assuming that $T$ is small enough that $C_\mu T\ll1$, we arrive at
\begin{equation}\label{4.good}
\|u\|^2_{C(0,T;L^2_{\theta_{\eb,\tau}})}+\alpha_2 \int_0^T\|\Nx u(t)\|_{L^2_{\theta_{\eb,\tau}}}^2\,dt\le
(C\mu+C_\mu T)Y_\tau(u,T)+C_\mu\|u(0)\|^2_{L^2_{\theta_{\eb,\tau}}}.
\end{equation}
Multiplying this inequality by $\theta_{\eb,\tau_1}(\tau)$ and using \eqref{4.obvious} again, we get
\begin{equation}\label{4.finest}
Y_{\tau_1}(u,T)\le (C\mu+C_\mu T)Y_{\tau_1}(u,T)+C_\mu\|u(0)\|^2_{L^2_{\theta_{\eb,\tau_1}}}.
\end{equation}
Finally, fixing $\mu$ and $T=T(\mu)$ being so small that $C\mu+C_\mu T<\frac12$, we see that
$$
Y_{\tau_1}(u,T)\le C\|u(0)\|^2_{L^2_{\theta_{\eb,\tau_1}}}.
$$
Inserting this estimate into the right-hand side of \eqref{4.good}, we prove that
\begin{equation}\label{4.verygood}
\|u\|^2_{C(0,T;L^2_{\theta_{\eb,\tau}})}+\alpha_2 \int_0^T\|\Nx u(t)\|_{L^2_{\theta_{\eb,\tau}}}^2\,dt\le
C\|u(0)\|^2_{L^2_{\theta_{\eb,\tau}}}
\end{equation}
holds for small $T$. Iterating this estimate, we verify that it actually holds for all $T\ge0$ (of course, with the constant $C$ depending on $T$).
Taking the supremum with respect to $\tau\in\R$ from both sides of this inequality and using \eqref{1.wb}, we get the desired estimate \eqref{4.unloc} and finish the proof of the theorem.
\end{proof}
\begin{remark}\label{Rem4.lipext} Arguing in a similar way, one shows that if $u^{(1)}$ and $u^{(2)}$ are two solutions of the Navier-Stokes equation which correspond to different non-autonomous external forces $g_1(t)$ and $g_2(t)$, then the following analogue of estimate \eqref{4.verygood} holds for $u=u^{(1)}-u^{(2)}$:
\begin{equation}\label{4.verygood1}
\|u\|^2_{C(0,T;L^2_{\theta_{\eb,\tau}})}+\alpha_2 \int_0^T\|\Nx u(t)\|_{L^2_{\theta_{\eb,\tau}}}^2\,dt\le
C(\|u^{(1)}(0)-u^{(2)}(0)\|^2_{L^2_{\theta_{\eb,\tau}}}+\|g_1-g_2\|_{L^2(0,T;L^2_{\theta_{\eb,\tau}})}^2),
\end{equation}
where the constant $C$ depends only on the uniformly local energy norms of $u^{(1)}$ and $u^{(2)}$. Indeed, to derive this estimate, one only need to estimate the additional term $(g_1-g_2,\psi^2 u-v_\psi)$ which is done in a standard way with the help of Cauchy Schwartz inequality and estimate \eqref{4.aucut}.
\end{remark}
We are now ready to prove the existence of a solution for the Navier-Stokes problem \eqref{3.eqmain}.
\begin{proposition}\label{Prop4.exist} For every $c\in\R$ and initial data $u_0\in V_c+\Cal H_b$, there exist a unique global solution $u(t)$ of problem \eqref{3.eqmain} with the mean flux $c$ and, therefore, the solution semigroup
\begin{equation}
S(t):V_c+\Cal H_b\to V_c+\Cal H_b,\ \ S(t)u_0\to u(t),\ \ t\ge0
\end{equation}
is well-defined.
\end{proposition}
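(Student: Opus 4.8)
The uniqueness assertion is already Theorem \ref{Th4.unique}, and once an energy solution is constructed on every interval $[0,T]$ the semigroup identities $S(0)=\mathrm{Id}$ and $S(t+\tau)=S(t)S(\tau)$ follow from that uniqueness; so the plan is only to build the global solution. I would first reduce to zero flux: with $u_0=V_c+\bar u_0$, $\bar u_0\in\Cal H_b$ (Lemma \ref{Lem3.flux}), and $u=V_c+\bar u$, it suffices to produce an energy solution $\bar u$ of the perturbed problem \eqref{3.eqflux}, since the global a priori estimate of Theorem \ref{Th3.main} is then available for it.

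For the approximating solutions I would solve \eqref{3.eqflux} on the bounded cylinders $\Omega_R:=(-R,R)\times(-1,1)$ with homogeneous Dirichlet conditions on $\partial\Omega_R$. The initial data is truncated via the stream function: with $\psi_0:=\Psi(\bar u_0)$ and a cut-off $\chi_R(x_1)$ equal to $1$ on $(-R+2,R-2)$ and supported in $(-R+1,R-1)$, put $\bar u_0^R:=(\d_{x_2}(\chi_R\psi_0),-\d_{x_1}(\chi_R\psi_0))$; this field is divergence free, belongs to $\Cal H(\Omega_R)$, converges to $\bar u_0$ in $L^2_{loc}(\Omega)$, and $\|\bar u_0^R\|_{\Cal H_b}\le C\|\bar u_0\|_{\Cal H_b}$ uniformly in $R$. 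On the bounded domain $\Omega_R$ the data $g+\Dx V_c\in L^2(\Omega_R)$ and the coefficient $V_c\in L^\infty(\Omega_R)$ are admissible, so the classical two-dimensional theory gives a unique global weak solution $\bar u^R$ of \eqref{3.eqflux} (regular for $t>0$); moreover $\divv\bar u^R=0$ together with $\bar u^R\cdot n|_{\partial\Omega_R}=0$ forces $\Bbb S\bar u^R_1\equiv0$, so $\bar u^R$ has zero flux. The decisive point is that the weighted-energy machinery of Sections \ref{s2}--\ref{s3} transfers to $\Omega_R$ \emph{with constants independent of $R$}: the corrector for $\bar u^R$ solves \eqref{2.au} with Dirichlet data on the whole of $\partial\Omega_R$ --- solvable because $\int_{\Omega_R}\theta\theta'\bar u^R_1\,dx=0$ by the zero-flux property --- hence it too vanishes on $\{x_1=\pm R\}$ and no new boundary term enters the energy identity \eqref{3.enflux}, while the analytic-semigroup, Leray-projector and Stokes estimates used in Theorem \ref{Th2.au} are of a local nature and admit $R$-uniform analogues on $\Omega_R$. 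Theorem \ref{Th3.main} then yields
\begin{equation*}
\|\bar u^R\|_{L^\infty(0,T;L^2(K))}+\|\Nx\bar u^R\|_{L^2(0,T;L^2(K))}\le C\bigl(1+c^{3/2}+\|u_0\|_{L^2_b}+\|g\|_{L^2_b}\bigr)^2
\end{equation*}
for every compact $K\subset\Omega$, uniformly in large $R$ and in $T$.

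Then I would let $R\to\infty$. Using \eqref{3.eqflux}, the Ladyzhenskaya inequality and the uniform bounds, one controls $\Dt\bar u^R$ uniformly in $L^{4/3}(0,T;[W^{-1,4/3}(K)]^2)$ (against divergence-free test fields) on each compact $K$; together with the uniform $L^\infty(0,T;L^2(K))\cap L^2(0,T;W^{1,2}(K))$ bound and the Aubin--Lions--Simon lemma on $K$, and a diagonal extraction along an exhaustion $K_j\uparrow\Omega$, this gives a subsequence with $\bar u^R\to\bar u$ strongly in $L^2_{loc}([0,T]\times\Omega)$, weakly-$*$ in $L^\infty(0,T;\Cal H_b)$, and $\Nx\bar u^R\rightharpoonup\Nx\bar u$ weakly in $L^2_{loc}$. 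In the distributional formulation tested against divergence-free $\varphi\in C^\infty_0([0,T]\times\Omega)$ the pressure is absent, the linear terms pass to the limit at once, and the nonlinear term is treated by $((\bar u^R,\Nx)\bar u^R,\varphi)=-(\bar u^R\otimes\bar u^R,\Nx\varphi)$, where strong $L^2_{loc}$-convergence plus the uniform $L^4_b$-bound give $\bar u^R\otimes\bar u^R\to\bar u\otimes\bar u$ in $L^{3/2}_{loc}$; the $V_c$-terms are linear in $\bar u^R$ with locally bounded coefficients. So $\bar u$ solves \eqref{3.eqflux} distributionally, and lower semicontinuity of norms gives $\bar u\in L^\infty(0,T;\Cal H_b)$, $\Nx\bar u\in L^2_b((0,T)\times\Omega)$. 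With this regularity the right-hand side $\bar g:=g+\Dx V_c-(\bar u,\Nx)\bar u-(V_c,\Nx)\bar u-(\bar u,\Nx)V_c$ of the associated linear Stokes problem satisfies \eqref{2.gas}, so Corollary \ref{Cor2.eneq} upgrades $\bar u$ to an energy solution (in particular $\bar u\in C(0,T;\Cal H_\phi)$, with the weighted energy identity), which also makes $\bar u(0)=\bar u_0$ rigorous. Hence $u:=V_c+\bar u$ is the required energy solution of \eqref{3.eqmain} with mean flux $c$.

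Since every constant above is independent of $T$, the solution extends to $[0,\infty)$, and uniqueness (Theorem \ref{Th4.unique}) makes $S(t)u_0:=u(t)$ a well-defined semigroup on $V_c+\Cal H_b$. The main obstacle I anticipate is this compactness step on the unbounded strip: the embedding $W^{1,2}_b\hookrightarrow L^2_b$ is \emph{not} compact, so only local-in-space compactness is available and the diagonal argument is needed, and --- more delicately --- one must verify that the corrector estimates of Theorem \ref{Th2.au}, and therefore the a priori bound of Theorem \ref{Th3.main}, indeed hold on the truncated domains $\Omega_R$ with $R$-independent constants. An alternative that bypasses the truncation would be to obtain short-time existence by a contraction mapping built on the linear theory of Corollary \ref{Cor2.eneq} and then to globalize using Theorem \ref{Th3.main}, the a priori bound keeping the existence time bounded away from zero.
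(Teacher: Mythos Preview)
Your approach differs from the paper's in two key respects, and the paper's route is considerably simpler.

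First, instead of truncating to bounded cylinders $\Omega_R$, the paper approximates by square-integrable data \emph{on the full strip}: choose $u_0^n\in\Cal H$ and $g_n\in[L^2(\Omega)]^2$ converging to $u_0,g$ in $L^2_{loc}(\bar\Omega)$ with uniformly bounded $L^2_b$-norms, and let $u_n$ be the classical finite-energy Leray--Hopf solutions on $\Omega$. Since $L^2(\Omega)\subset L^2_b(\Omega)$, every piece of the weighted machinery already built (Theorem \ref{Th2.au}, Corollary \ref{Cor2.aumain}, Theorem \ref{Th3.main}) applies to $u_n$ verbatim, with the same constants --- no truncated Stokes operator, no $R$-uniformity to verify. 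This completely sidesteps the obstacle you yourself flag as the main difficulty of your scheme: redeveloping the corrector estimates of Theorem \ref{Th2.au} on $\Omega_R$ with $R$-independent constants is genuinely nontrivial (the analytic-semigroup and fractional-power arguments there are \emph{not} purely local), and none of the results proved in the paper cover it.

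Second, instead of weak compactness plus Aubin--Lions--Simon and a diagonal extraction, the paper exploits the Lipschitz stability estimate \eqref{4.verygood1} of Remark \ref{Rem4.lipext}. Because $u_0^n\to u_0$ and $g_n\to g$ \emph{strongly} in $L^2_{\theta_{\eb,\tau}}$ (the $L^2_b$-bound plus $L^2_{loc}$-convergence plus integrability of $\theta$ gives this), the sequence $(u_n)$ is Cauchy in $C(0,T;L^2_\theta)\cap L^2(0,T;W^{1,2}_\theta)$; one thus gets strong convergence directly and the passage to the limit in the nonlinearity is immediate, with no need to control $\Dt u_n$ or invoke compactness. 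Your compactness argument would also close, but it is longer and rests on the unresolved $R$-uniformity issue above.
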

\begin{proof} Since the uniqueness is already verified, we only need to prove the existence. For simplicity, we assume that $c=0$ (the general case is reduce to this particular one exactly as in Theorem \ref{Th3.main}). We approximate
 the initial data $u_0\in\Cal H_b$ and the external forces $g\in [L^2_b(\Omega)]^2$ by the sequences $u_0^n\in \Cal H$ and $g_n\in [L^2(\Omega)]^2$ in such way that
\begin{equation}\label{4.approximation}
1.\ g_n\to g\ \text{and}\
 u_0^n\to u_0 \ \text{in}\ [L^2_{loc}(\bar\Omega)]^2,\ \ 2.\ \ \|u_n\|_{\Cal H_b}\le C\|u_0\|_{\Cal H_b} \ \text{and} \ \|g_n\|_{L^2_b}\le \|g\|_{L^2_b}.
\end{equation}
Obviously such approximations exist. We denote by $u_n(t)$ the solutions of the Navier-Stokes problem \eqref{3.eqmain} where the initial data $u_0$ and the external force $g_n$ are replaced by $u_0^n$ and $g_n$ respectively. Since $u_0^n$ and $g_n$ are square integrable, $u_n(t)$ is a usual energy solution of the Navier-Stokes problem and the existence of such solutions is well-known, see \cite{temam,temam1} or \cite{babin1}.
Moreover, due to estimate \eqref{3.00est} and the second assertion of \eqref{4.approximation}, we have
\begin{equation}\label{4.goodappr}
\|u_n\|_{C(0,T;L^2_b)}+\|\Nx u_n\|_{L^2_b((0,T)\times\Omega)}\le C(1+\|u_0\|_{\Cal H_b}+\|g\|_{L^2_b}),
\end{equation}
where $C$ is independent of $n$. Let us now fix some weight $\theta_{\eb,\tau}$ with sufficiently small $\eb>0$. Then, \eqref{4.approximation} implies that $u_0^n\to u_0$ and $g_n\to g$ strongly in $L^2_{\theta_{\eb,\tau}}$, see e.g., \cite{ZelikCPAM} and consequently, estimate \eqref{4.verygood1} applied to solutions $u_n$ and $u_m$ shows that $u_n$ is a Cauchy sequence in $C(0,T;L^2_\theta)\cap L^2(0,T;W^{1,2}_\theta)$.
\par
Let $u(t)$ be the limit of this sequence. Then, due to \eqref{4.goodappr}, $u\in L^\infty(0,T;L^2_b)$ and $\Nx u\in L^2_b((0,T)\times\Omega)$. Moreover, passing to the limit $n\to\infty$ in the equations for $u_n(t)$, we see that $u(t)$ solves the Navier-Stokes problem \eqref{3.eqmain} (the passage to the limit in the nonlinear terms is straightforward since we have the {\it strong} convergence in $C(0,T;L^2_{\theta})$ and in $L^2(0,T;W^{1,2}_\theta)$. Thus, the desired solution $u(t)$ is constructed and the proposition is proved.
\end{proof}
We conclude this section by stating the preliminary result on the higher regularity of solutions which will be improved in the next section.
\begin{proposition}\label{Prop4.badreg} Let the assumptions of Theorem \ref{Th3.main} hold and let, in addition, the initial data $u_0\in [W^{2,2}_b(\Omega)]^2$. Then, the solution $u(t)\in [W^{2,2}_b(\Omega)]^2$ for all $t\ge0$ and the following estimate holds:
\begin{equation}\label{4.badest}
\|u\|_{L^\infty(0,T;W^{2,2}_b)}\le Q_T(\|u_0\|_{W^{2,2}_b})+Q_T(\|g\|_{L^2_b}),
\end{equation}
where the function $Q_T$ depends on $T$, but is independent of $u_0$ and $g$.
\end{proposition}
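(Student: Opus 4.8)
The plan is to bootstrap from the energy-level estimate \eqref{3.00est} already obtained in Theorem \ref{Th3.main}, treating the nonlinear term as a source and invoking the weighted/uniformly local maximal regularity of the non-stationary Stokes operator from Proposition \ref{Prop1.strongw}. First I would fix a weight $\phi=\theta_{\eb,s}$ with $\eb$ small (chosen as in \eqref{3.ebfin}, depending on the data) so that the energy estimate \eqref{3.00est} translates, via \eqref{1.wbeb} and Proposition \ref{Prop1.wb}, into uniform control of $\|u\|_{C(0,T;L^2_\phi)}$ and $\|\Nx u\|_{L^2(0,T;L^2_\phi)}$. The Stokes problem $\Dt u-\Dx u+\Nx p=g-(u,\Nx)u$, $\divv u=0$, $u|_{t=0}=u_0$, then has a right-hand side $\bar g:=g-(u,\Nx)u$ which, by the Ladyzhenskaya-type bounds already displayed at the start of Section \ref{s3}, lies in $L^{4/3}_b((0,T)\times\Omega)$; applying Proposition \ref{Prop1.strongw} (with $L^2_\phi$ replaced by the relevant intermediate spaces, or applying it iteratively on the scale of fractional Sobolev spaces as in Step~2 of Theorem \ref{Th2.au}) gives a first gain of regularity, say $u\in L^\infty(0,T;\Cal V_b)\cap L^2(0,T;[W^{2,2}_b]^2)$, provided $u_0\in\Cal V_b$.

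The second step is the actual bootstrap. With $u\in L^\infty(0,T;\Cal V_b)$ in hand, the nonlinear term improves: using $W^{1,2}_b\subset L^p_b$ for all $p<\infty$ together with $\Nx u\in L^2(0,T;W^{1,2}_b)$ and $u\in L^\infty(0,T;W^{1,2}_b)$, one estimates $(u,\Nx)u$ in $L^2(0,T;L^2_b)$ (for instance $\|(u,\Nx)u\|_{L^2_b}\le\|u\|_{L^\infty_b}\|\Nx u\|_{L^2_b}$ with $\|u\|_{L^\infty_b}$ controlled by $\|u\|_{W^{2,2}_b}^{1/2}\|u\|_{W^{1,2}_b}^{1/2}$ in 2D, absorbing the top-order factor by Young's inequality). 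Feeding $\bar g\in L^2(0,T;L^2_b)$ back into the maximal regularity estimate \eqref{1.nonS} (in its uniformly local form) with $u_0\in[W^{2,2}_b]^2\cap\Cal V_b$ now yields $\Dt u,\Dx u\in L^2(0,T;L^2_b)$ and $u\in C(0,T;[W^{1,2}_b]^2)$, and a further Gronwall/absorption argument upgrades this to $u\in L^\infty(0,T;[W^{2,2}_b]^2)$ with the bound \eqref{4.badest}. All constants stay uniform in $s$ because the constants in Propositions \ref{Prop1.strongw}, \ref{Prop1.wLStokes} are weight-independent, so taking the supremum over $s$ via Proposition \ref{Prop1.wb} produces the uniformly local estimate. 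If the hypothesis $u_0\in[W^{2,2}_b]^2$ does not already include the compatibility $u_0\in\Cal V_b$, one first runs the argument with $\Cal V_b$ data (density/approximation plus the uniqueness from Theorem \ref{Th4.unique} to pass to the limit) and then does the $W^{2,2}$ step.

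The main obstacle I anticipate is the quadratic nonlinearity at the $W^{2,2}$ level: in two dimensions the product estimate is borderline, and the naive bound $\|(u,\Nx)u\|_{L^2_b}\lesssim\|u\|_{W^{2,2}_b}\|u\|_{W^{1,2}_b}$ is not good enough to close a linear Gronwall inequality without care — one must interpolate to put a fractional (less than $1$) power of the top-order norm on the right and absorb, or equivalently work on the fractional scale $W^{1+\sigma,2}_b$ first and only then reach $W^{2,2}_b$. A secondary technical point is that the uniformly local spaces $\Cal H_b$, $\Cal V_b$ are not closures of $C_0^\infty$, so the approximation step must be carried out through the weighted spaces $\Cal H_\phi$, $\Cal V_\phi$ (as in Corollary \ref{Cor2.eneq} and Proposition \ref{Prop4.exist}) and then localized; but since this is only a preliminary proposition to be sharpened in Section \ref{s5}, it suffices to record the crude bound \eqref{4.badest} with a data-dependent function $Q_T$, which makes the absorption arguments comfortably non-sharp.
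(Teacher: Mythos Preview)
Your route --- treat $(u,\Nx)u$ as a source and invoke the non-stationary Stokes maximal regularity of Proposition~\ref{Prop1.strongw}, then bootstrap --- is genuinely different from what the paper does, and it has a gap at the final step. Proposition~\ref{Prop1.strongw} only delivers $\Dt u,\Dx u\in L^2(0,T;L^2_\phi)$, i.e.\ $L^2$-in-time control of the $W^{2,2}$ norm, and the interpolation with $\Dt u\in L^2(0,T;L^2_b)$ gives at best $u\in C(0,T;W^{1,2}_b)$. Your ``further Gronwall/absorption argument upgrades this to $L^\infty(0,T;W^{2,2}_b)$'' is the crux and is not explained: no $L^\infty$-in-time parabolic maximal regularity for the Stokes problem in uniformly local spaces is stated in the paper, and the analytic-semigroup estimate used in Step~2 of Theorem~\ref{Th2.au} is carried out only in unweighted $L^2$. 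Closing at the $W^{2,2}$ level by semigroup methods would require $(u,\Nx)u$ in, say, $C(0,T;W^{\sigma,2}_b)$ for some $\sigma>0$, which is another bootstrap loop you have not set up.

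The paper avoids this entirely by differentiating the equation in time: setting $v=\Dt u$, one obtains an equation of exactly the form \eqref{4.dif}, so the full weighted-energy/corrector machinery of Theorem~\ref{Th4.unique} applies verbatim and yields $\|\Dt u\|_{C(0,T;L^2_b)}\le C_T\|\Dt u(0)\|_{L^2_b}$, with $\|\Dt u(0)\|_{L^2_b}$ bounded via \eqref{4.time} by $\|u_0\|_{W^{2,2}_b}$. The passage from $\Dt u\in L^\infty(0,T;L^2_b)$ to $u\in L^\infty(0,T;W^{2,2}_b)$ is then purely \emph{stationary}: for each fixed $t$ one views \eqref{3.eqmain} as the steady Navier--Stokes problem with right-hand side $g-\Dt u(t)$ and invokes a short elliptic lemma (Lemma~\ref{Lem4.auto}) based only on Proposition~\ref{Prop1.wLStokes}. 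Thus the paper reuses the already-built uniqueness estimate and needs no new time-$L^\infty$ parabolic regularity; your approach, to be completed, would essentially have to rediscover the time-differentiation step or import an additional maximal-regularity tool.
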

\begin{proof}
 We give only the formal derivation of estimate \eqref{4.badest} which can be justified in a standard way (e.g., by approximating the solution $u$ by the square integrable solutions which regularity is well known) and also consider for simplicity only the case $c=0$. We differentiate equation \eqref{3.eqmain} in time and denote $v=\Dt u$. Then this function solves the equation
\begin{equation}\label{4.tdif}
\Dt v-\Dx v+(v,\Nx)u+(u,\Nx)v+\Nx p=0,\ \ v\big|_{t=0}=\Dt u(0),\ \ \divv v=0.
\end{equation}
This equation has the form of \eqref{4.dif}, so repeating word by word the arguments given in the proof of Theorem \ref{Th4.unique}, we derive that
\begin{equation}\label{4.derest}
\|v\|_{C(0,T;L^2_\theta)}+\|\Nx v\|_{L^2(0,T;L^2_\theta)}\le C_T\|\Dt u(0)\|_{L^2_\theta},
\end{equation}
where $\theta_{\eb,\tau}$ is a weight function, $\eb\ll1$ and $\tau\in\R$. Now, applying the Leray projector $P$ to equation \eqref{3.eqmain} we have
\begin{equation}\label{4.time}
\Dt u=\Pi\Dx u-\Pi[(u,\Nx)u]+\Pi g.
\end{equation}
Using the regularity of the Leray projector in $L^2_b$ and the embedding $W^{2,2}_b\subset C_b$, we see that
$$
\|\Dt u(0)\|_{L^2_b}\le C\|u_0\|_{W^{2,2}_b}(1+\|u_0\|_{W^{2,2}_b}+C\|g\|_{L^2_b})
$$
and taking the supremum with respect to $\tau\in\R$ from both sides of \eqref{4.derest}, we have
\begin{equation}\label{4.badtest}
\|\Dt u(T)\|_{L^2_b}\le C_T\|u_0\|_{W^{2,2}_b}(1+\|u_0\|_{W^{2,2}_b}+C_T\|g\|_{L^2_b}).
\end{equation}
To derive the desired estimate \eqref{4.badest} from \eqref{4.badtest}, we need the following lemma which also has an independent interest.
\begin{lemma}\label{Lem4.auto} Let $g\in [L^2_b(\Omega)]^2$ and $u\in\Cal V_b$ be the solution of the stationary Navier-Stokes problem
\begin{equation}\label{4.statNS}
-\Dx u+(u,\Nx)u+\Nx p=g,\ \ \divv u=0, u\big|_{\partial\Omega}=0.
\end{equation}
Then, $u\in [W^{1,2}_b(\Omega)]^2$ and the following estimate holds:
\begin{equation}\label{4.statreg}
\|u\|_{W^{2,2}_b}\le Q(\|g\|_{L^2_b})+Q(\|u\|_{L^2_b})
\end{equation}
for some monotone function which is independent of $u$ and $g$,
\end{lemma}
\begin{proof}[Proof of the lemma] Indeed, since the stationary solution $u$ formally satisfies the non-autonomous equation \eqref{3.eqmain}, estimate \eqref{3.0est} is formally applicable to it and gives that
\begin{equation}
\|\Nx u\|_{L^2_b}\le C(1+\|g\|^2_{L^2_b}+\|u\|^2_{L^2_b})^2.
\end{equation}
Moreover,
\begin{equation}
\|(u,\Nx)u\|_{L^2_b}\le C\|u\|_{L^\infty}\|\Nx u\|_{L^2_b}\le C\|\Nx u\|_{L^2_b}^{3/2}\|u\|_{W^{2,2}_b}^{1/2}\le \mu\|u\|_{W^{2,2}_b}+C_\mu\|\Nx u\|^3_{L^2_b},
\end{equation}
where $\mu>0$ can be arbitrarily small. Using now the maximal regularity of the Stokes operator in $L^2_b$ and interpreting the nonlinear term in \eqref{4.statNS} as a part of the external forces, we derive the desired estimate \eqref{4.statreg} and finish the proof of the lemma.
\end{proof}
Now it is not difficult to finish the proof of the proposition. Indeed, interpreting the Navier-Stokes equation \eqref{3.eqmain} as stationary equation \eqref{4.statNS} at every fixed point $t$ with the external forces $g(t)=g-\Dt u(t)$ and using estimate \eqref{4.statreg} together with \eqref{3.0est} and \eqref{4.badtest}, we end up with \eqref{4.badest} and finish the proof of the proposition.
\end{proof}

\section{Dissipativity and smoothing property}\label{s5}
In this concluding section, we prove the dissipative estimate for the infinite-energy solutions of the Navier-Stokes problem \eqref{3.eqmain} as well as the so-called smoothing property. These properties are crucial, e.g., for the attractor theory, see e.g., \cite{MirZel} and references therein. We start with the dissipativity. Next theorem refines the result of Theorem \ref{Th3.main}.

\begin{theorem}\label{Th5.dis} Let the assumptions of Theorem \ref{Th3.main} hold. Then, the energy solution $u(t)$ satisfies the following estimate:
\begin{equation}\label{5.dis}
\|u(t)\|_{L^2_b}\le Q(\|u_0\|_{L^2_b})e^{-\alpha t}+C(1+c^{3/2}+\|g\|_{L^2_b}^2),
\end{equation}
where the positive constants $\alpha$ and $C$ and the monotone function $Q$ are independent of $t$, $c$ and $u_0$.
\end{theorem}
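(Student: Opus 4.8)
The plan is to upgrade the uniform-in-time bound of Theorem \ref{Th3.main} to a genuinely dissipative one in three steps: first derive a \emph{dissipative} weighted energy inequality valid for all $t\ge0$ with the parameter $\eb$ fixed once and for all; second, extract the exponential rate by iterating this inequality over unit time intervals and transfer the result from the weighted to the uniformly local norm; third, remove from the resulting estimate the dependence of the decaying term and of the floor on $\|u_0\|_{L^2_b}$ by combining the crude bound (for small $t$) with an absorbing-ball argument (for large $t$).

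\textit{Step 1.} By Theorem \ref{Th3.main} we already have $\sup_{t\ge0}\|u(t)\|_{L^2_b}\le R_0:=C(1+c^{3/2}+\|u_0\|_{L^2_b}+\|g\|_{L^2_b})^2$. Because this bound is uniform in $t$ (not merely on a fixed interval $[0,T]$), the smallness requirements of the type \eqref{3.loop}, \eqref{3.extra3}, \eqref{3.ebs} can all be met by a single choice $\eb=\eb(R_0,c)$, so that no continuation-in-$T$ argument is needed. With such $\eb$ the weighted energy identity \eqref{3.enflux}, the bounds \eqref{3.lcgood}--\eqref{3.dvc1} on the Poiseuille terms, the bounds \eqref{3.inest1}, \eqref{3.inest2} on the nonlinearity, Corollary \ref{Cor2.aumain} for the corrector, and the weighted Poincar\'e inequality yield, after absorbing every small term,
$$
\frac{d}{dt}E_\theta(t)+\alpha_1 E_\theta(t)+\alpha_1\|\Nx u(t)\|^2_{L^2_\theta}\le K_0+\kappa\|u\|^2_{C(0,T;L^2_\theta)},
$$
where $E_\theta(t):=\tfrac12\|u(t)\|^2_{L^2_\theta}-(u(t),v_\theta(t))\sim\|u(t)\|^2_{L^2_\theta}$ (the corrector being $O(\eb)$-small by Corollary \ref{Cor2.aumain}), $K_0$ collects the contributions of $g$ and of $V_c$, and $\kappa$ can be made as small as we please by shrinking $\eb$ (and, in the nonzero-flux case, the parameter $\mu$ of Lemma \ref{Lem3.flux}).

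\textit{Step 2.} On each interval $[T,T+1]$ the computation leading to \eqref{3.uniform2} bounds $\|u\|^2_{C(T,T+1;L^2_\theta)}$ by $C(E_\theta(T)+K_0)$; inserting this back into the inequality of Step 1 integrated over $[T,T+1]$ gives the discrete relation $E_\theta(T+1)\le(e^{-\alpha_1}+C\kappa)E_\theta(T)+CK_0$. Choosing $\kappa$ so small that $\lambda:=e^{-\alpha_1}+C\kappa<1$, iterating over the integers and interpolating between them by the unit-interval bound, we obtain $\|u(t)\|^2_{L^2_{\theta_{\eb,s}}}\le C\|u_0\|^2_{L^2_{\theta_{\eb,s}}}e^{-\alpha t}+CK_0$ uniformly in the shift $s\in\R$; taking $\sup_s$ and applying Proposition \ref{Prop1.wb} (together with \eqref{1.wbeb} for the initial datum) turns this into a dissipative estimate in $L^2_b$.

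\textit{Step 3 --- the main obstacle.} With $\eb$ tied to $R_0$, the estimate produced in Step 2 still carries a factor comparable to a positive power of $R_0$, since an $\eb^{-1}$ appears whenever one measures the $x_1$-independent profile $V_c$ and the datum in weighted norms; hence its floor and the prefactor of $e^{-\alpha t}$ still depend on $\|u_0\|_{L^2_b}$, i.e. it is again only a uniform bound. To reach \eqref{5.dis} the parameter $\eb$ must be decoupled from the size of the initial datum. For $0\le t\le T_*$ one discards Step 2 and uses only the crude bound $\|u(t)\|_{L^2_b}\le R_0\le Q_1(\|u_0\|_{L^2_b})+C(1+c^{3}+\|g\|^2_{L^2_b})$, which is already of the required form after replacing $Q_1(\cdot)$ by $Q_1(\cdot)e^{\alpha T_*}$. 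For $t\ge T_*$ one reruns Steps 1--2, re-tuning $\eb$ to the smaller current size of the solution, and repeats this until the $L^2_b$-radius has fallen to an absorbing value $R_\infty$ controlled by $c$ and $\|g\|_{L^2_b}$ alone (a sharper book-keeping of the $V_c$-terms at this stage, where $\eb$ may be taken larger, is what produces the improved power $c^{3/2}$ rather than $c^3$ in the floor); from then on the solution stays inside twice the absorbing ball. The genuinely delicate point throughout is precisely this book-keeping of how $\eb$, the current radius and the data $c,\|g\|_{L^2_b}$ interact in the bootstrap, so that the number of passes and the total waiting time $T_*$ depend only on $\|u_0\|_{L^2_b}$; once this is arranged, combining the two stages --- and, if needed, iterating the resulting estimate in time exactly as at the end of the proof of Theorem \ref{Th4.unique} --- yields \eqref{5.dis}.
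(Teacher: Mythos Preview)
Your outline is correct and follows essentially the same strategy as the paper: obtain a dissipative weighted inequality with an $\eb$-dependent floor, then run a bootstrap in which $\eb$ is progressively enlarged until it no longer depends on $\|u_0\|_{L^2_b}$. The organisation of your Steps~1--2 differs slightly from the paper's (you first invoke the global bound of Theorem~\ref{Th3.main} and iterate over unit intervals, whereas the paper applies Gronwall directly to \eqref{3.wenestflux}, absorbs the $C_1\mu$-term by taking the supremum in $t$, and only then iterates with a large timestep to kill the residual $\mu\|u_0\|^2$), but this is a matter of taste and both routes land on an inequality of the form \eqref{5.bestest}. Where you are vague is exactly the point you flag as delicate: the paper makes the bootstrap completely explicit by showing that once the solution has reached the level $\|u(T)\|^2_{L^2_{\theta_{\eb_1,s}}}\le 2C_3\eb_1^{-1}K$, the doubled value $\eb_2=2\eb_1$ again satisfies the smallness condition \eqref{5.goodind} because $\eb_2^2\|u(T)\|^2_{L^2_{\theta_{\eb_2,s}}}\le 4\eb_1^2\|u(T)\|^2_{L^2_{\theta_{\eb_1,s}}}\le 8C_3\eb_1K\le 4C_3\mu$; hence finitely many doublings (with a total waiting time depending only on $\|u_0\|_{L^2_b}$) bring $\eb$ up to the $u_0$-independent value $\eb_0=\mu K^{-1}$, after which \eqref{5.bestest} yields the absorbing ball and hence \eqref{5.dis}. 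This single computation is the missing ingredient in your Step~3.
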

\begin{proof} The proof given below follows \cite{ZelikGlasgow}, see \cite{ZelPenCPAA,ZelJMFM} for the alternative method based on using the time dependent weights $\theta_{\eb(t),s}$. We start with estimate \eqref{3.wenestflux} which is proved under  assumption \eqref{3.ebs} on the parameter $\eb$. Applying the Gronwall estimate to it assuming that \eqref{3.extra3} holds, analogously to \eqref{3.est2}, we end up with
$$
\|u(t)\|^2_{L^2_{\theta_{\eb,s}}}\le C_1\|u_0\|^2_{L^2_{\theta_{\eb,s}}}e^{-\alpha t}+C_1\eb^{-1}(1+c^{3/2}+\|g\|_{L^2_b}^2)^2+C_1\mu\|u\|_{C(0,T;L^2_{\theta_{\eb,s}})}^2,
$$
where the constants $C_1, \alpha>0$ are independent of $T$ and $s\in\R$.
Taking the supremum over $t\in[0,T]$ from both sides of this inequality and utilizing that $\mu\ll1$, we obtain the estimate for $\|u\|_{C(0,T;L^2_{\theta_{\eb,s}})}$ and, inserting it into the right-hand side of the above derived inequality, we get
\begin{equation}\label{5.estmain}
\|u(t)\|^2_{L^2_{\theta_{\eb,s}}}\le C_1(\mu+e^{-\alpha t})\|u_0\|^2_{L^2_{\theta_{\eb,s}}}+C_1\eb^{-1}K,\ \ K:=(1+c^{3/2}+\|g\|_{L^2_b})^2.
\end{equation}
This estimate is the main technical tool to verify the dissipativity. We recall that, for its validity, the parameter $\eb$
should satisfy two inequalities: \eqref{3.ebs} and \eqref{3.extra3}. Actually, we will use in the sequel only such $\eb$ which satisfy
\begin{equation}\label{5.goodebs}
\eb\le \eb_0:=\mu\(1+c^{3/2}+\|g\|_{L^2_b}\)^2,
\end{equation}
where $\mu\ll1$ is fixed, so \eqref{3.ebs} is automatically satisfied. To verify \eqref{3.extra3}, it is sufficient to note that, since the right-hand side of \eqref{5.estmain} is monotone decreasing in time, so \eqref{3.extra3} will be formally satisfied if
$$
\eb^2(C_1(\mu+1)\|u_0\|^2_{L^2_{\theta_{\eb,s}}}+C_1\eb^{-1}K)\le\frac1{4C^2}
$$
and this can be justified using the standard continuity arguments. Finally, using \eqref{5.goodebs} and the fact that $\mu\ll1$, we see that the last inequality will be satisfied for all $t\ge0$ if
\begin{equation}\label{5.goodind}
\eb^2\|u_0\|^2_{L^2_{\theta_{\eb,s}}}\le \frac1{8C^2C_1}.
\end{equation}
Thus, estimate \eqref{5.estmain} holds for all $t\ge0$ if $\eb$ and $u_0$ satisfies inequalities \eqref{5.goodebs} and \eqref{5.goodind}.
\par
At the next step, we get rid of the "non-dissipative" parameter $\mu$ in the right-hand side of \eqref{5.estmain}. To this end, we iterate it with a sufficiently large timestep $T$:
$$
\|u(nT)\|_{L^2_{\theta_{\eb,s}}}^2\le C_1(\mu+e^{-\alpha T})\|u((n-1)T)\|^2_{L^2_{\theta_{\eb,s}}}+C_1\eb^{-1}K
$$
which after the summations of the geometric progressions gives
$$
\|u(nT)\|_{L^2_{\theta_{\eb,s}}}^2\le C_2e^{-\beta n T}\|u_0\|^2_{L^2_{\theta_{\eb,s}}}+C_2\eb^{-1}K
$$
for the appropriate positive constants $\beta$ and $C_2$ which are independent of $n$. Combining this estimate with \eqref{5.estmain} (in order to estimate $u(t)$ for $t\in((n-1)T,nT)$), we end up with the desired estimate:
\begin{equation}\label{5.bestest}
\|u(t)\|^2_{L^2_{\theta_{\eb,s}}}\le C_3e^{-\gamma t}\|u_0\|^2_{L^2_{\theta_{\eb,s}}}+C_3\eb^{-1}K
\end{equation}
with positive $\gamma$ and $C_3$ which are independent of $t$.
\par
Estimate \eqref{5.bestest} looks similar to \eqref{5.dis}, but there is still an essential difference, namely, the parameter $\eb$ still depends on the initial value $u_0$ through condition \eqref{5.goodind} (in order to initialize the estimates, we need to take $\eb$ satisfying \eqref{3.ebfin} or something similar). Thus, in order to finish the proof of the theorem, we need to show that, for large times, the parameter $\eb$ can be increased and finally made independent of $u_0$.
\par
Let $\eb=\eb_1$ be such that
\begin{equation}\label{5.leftright}
\mu(1+\|u_0\|_{L^2_b}+c^{3/2}+\|g\|_{L^2_b})^{-2}\le \eb_1\le \eb_0=\mu K^{-1}
\end{equation}
and assumption \eqref{5.goodind} be satisfied. Then, according to \eqref{5.bestest}, there exists $T=T(\|u_0\|_{L^2_b})$ such that
\begin{equation}\label{5.dis1}
\|u(t)\|_{L^2_{\theta_{\eb_1,s}}}\le 2C_3\eb_1^{-1}K,\ \ t\ge T.
\end{equation}
Assume now that $\eb_1\le \eb_0/2$. Then, the new $\eb_2=2\eb_1$ satisfies
$$
\eb_2^2\|u(T)\|_{L^2_{\theta_{\eb_2,s}}}^2\le 4\eb_1^2\|u(T)\|^2_{L^2_{\theta_{\eb_1,s}}}\le 8C_3\eb_1 K=8C_3\mu\eb_1\eb_0^{-1}\le 4C_3\mu
$$
and, for sufficiently small $\mu$, condition \eqref{5.goodind} is satisfied. Thus, we may apply estimate \eqref{5.bestest} starting from $t=T$ and using $\eb=\eb_2=2\eb_1$. Repeating this procedure finitely many times if necessary, we finally prove that, for every $u_0$, there exists
$T=T(\|u_0\|_{L^2_b})$ such that
\begin{equation}\label{5.almost}
\|u(t)\|^2_{L^2_{\theta_{\eb,s}}}\le 2C_3\eb^{-1}K,\ \ \eb_0/2\le \eb\le\eb_0,\ \ t\ge T.
\end{equation}
The fact that $T$ is also independent of $s\in\R$ is guaranteed by the left inequality of \eqref{5.leftright}. Taking the supremum over $s\in\R$ from both sides of \eqref{5.almost} and using \eqref{1.wb} and the definition of $\eb_0$, we see that
$$
\|u(t)\|^2_{L^2_b}\le C(1+c^{3/2}+\|g\|_{L^2_b})^4,\ \ t\ge T.
$$
This estimate, which establishes the existence of an absorbing ball for the solution semigroup $S(t)$ in $[L^2_b(\Omega)]^2$ is equivalent to the desired dissipative estimate \eqref{5.dis}. Thus, Theorem \ref{Th5.dis} is proved.
\end{proof}
At the next step, we establish the uniformly local analogue of the standard $L^2\to H^1$ smoothing property for the solutions of the Navier-Stokes problem.
\begin{theorem}\label{Th5.smo} Let the assumptions of Theorem \ref{Th3.main} hold and let $u(t)$ be the energy solution of the Navier-Stokes problem \eqref{3.eqmain}. Then, the following estimate holds:
\begin{equation}\label{5.smo1}
t^{1/2}\|u(t)\|_{W^{1,2}_b}+\|t^{1/2}\Dx u\|_{L^2_b((0,1)\times\Omega)}\le Q(\|u_0\|_{L^2_b}),\ \ t\in[0,1]
\end{equation}
for some monotone function depending on $c$, but independent of $t$ and $u_0$. Moreover, if in addition $u_0\in [W^{1,2}_b(\Omega)]^2$, then the following dissipative estimate holds:
\begin{equation}\label{5.regh1}
\|u(t)\|_{W^{1,2}_b}\le Q(\|u_0\|_{W^{1,2}_b})e^{-\alpha t}+Q(\|g\|_{L^2_b}),
\end{equation}
where the positive constant $\alpha$ and the monotone function $Q$ are independent of $t$ and $u_0$.
\end{theorem}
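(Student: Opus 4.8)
The plan is to obtain the instantaneous smoothing \eqref{5.smo1} from a weighted energy estimate carrying the extra multiplier $t$, in the spirit of Sections \ref{s2}--\ref{s3}, and then to deduce the dissipative bound \eqref{5.regh1} by combining \eqref{5.smo1} with the $L^2_b$-dissipativity of Theorem \ref{Th5.dis} and with a short-time $W^{1,2}_b$-estimate. As usual, all the formal computations below would first be carried out for the smooth finite-energy approximations $u_n$ constructed in Proposition \ref{Prop4.exist} (whose higher regularity on compact time intervals is provided by Proposition \ref{Prop4.badreg}), keeping every constant independent of $n$; the passage to the limit $n\to\infty$ is justified by the continuous-dependence estimate of Theorem \ref{Th4.unique}.

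For \eqref{5.smo1}, I would apply the Leray projector to \eqref{3.eqmain}, rewrite it as $\Dt u+Au+\Pi[(u,\Nx)u]=\Pi g$, and test it in $[L^2(\Omega)]^2$ against $t\,\theta^2Au-w_\theta$, where $\theta=\theta_{\eb,s}$ is the weight \eqref{1.thetaweight} with $\eb\ll1$ and $w_\theta$ solves the adjoint Stokes problem \eqref{2.au} of Section \ref{s2} (with the factor $t$ incorporated into the data $2t\,\theta\theta'(Au)_1=\partial_{x_2}\bigl(2t\,\theta\theta'\Psi(Au)\bigr)$), chosen so that $\divv\bigl(t\,\theta^2Au-w_\theta\bigr)=0$; this cancels the pressure term $\Nx p$ exactly as in the derivation of the energy identity \eqref{2.energy}, and an $L^2$-in-time analog of Corollary \ref{Cor2.aumain} — obtained by the same analytic-semigroup estimates applied to $L^2(0,T;\cdot)$-data — gives $\|w_\theta\|_{L^2(0,1;L^2_{\theta^{-2}}\cap L^3_{\theta^{-2}})}\le C\eb\|Au\|_{L^2(0,1;L^2_\theta)}$. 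Up to $\eb$-small commutator terms produced by the weight (controlled by $|\theta'|\le C\eb\theta^2$), the principal part yields a differential inequality of the form $\tfrac{d}{dt}\bigl(t(\tfrac12\|\Nx u\|^2_{L^2_\theta}-(u,w_\theta))\bigr)+t\|Au\|^2_{L^2_\theta}\le\tfrac12\|\Nx u\|^2_{L^2_\theta}+t\,|(\Pi g-\Pi[(u,\Nx)u],\theta^2Au-w_\theta)|+(\dots)$; the cubic term is estimated in two dimensions by the weighted Agmon--Ladyzhenskaya inequalities,
\[
|((u,\Nx)u,\theta^2Au)|\le C\|u\|_{L^2_\theta}^{1/2}\|\Nx u\|_{L^2_\theta}\|Au\|_{L^2_\theta}^{3/2}\le\tfrac14\|Au\|^2_{L^2_\theta}+C\|u\|^2_{L^2_\theta}\|\Nx u\|^4_{L^2_\theta},
\]
its corrector counterpart by $|((u,\Nx)u,w_\theta)|\le\|u\|_{L^6_\theta}\|\Nx u\|_{L^2_\theta}\|w_\theta\|_{L^3_{\theta^{-2}}}\le C\eb\|\Nx u\|^2_{L^2_\theta}\|Au\|_{L^2_\theta}$ (weighted Poincaré plus the bound on $w_\theta$), and the force term by Cauchy--Schwarz; all the $\tfrac14\|Au\|^2_{L^2_\theta}$- and $\eb$-small contributions are absorbed into $t\|Au\|^2_{L^2_\theta}$ exactly as in the $\eb$-loop of Theorem \ref{Th3.0flux}. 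Since Theorem \ref{Th3.main} already provides $\|u\|_{L^\infty(0,T;L^2_b)}+\|\Nx u\|_{L^2_b((0,T)\times\Omega)}\le Q(\|u_0\|_{L^2_b})$, the function $y(t):=t(\tfrac12\|\Nx u(t)\|^2_{L^2_\theta}-(u(t),w_\theta(t)))$ then satisfies a linear Gronwall inequality $y'\le a(t)+b(t)y$ with $a,b\in L^1(0,1)$ (by that bound together with \eqref{1.wbeb}) and $y(0)=0$, so $y(t)\le Q(\|u_0\|_{L^2_b})$ on $[0,1]$; absorbing $(u,w_\theta)$ by $C\eb\|u\|^2_{C(0,1;L^2_\theta)}$, taking $\sup_{s\in\R}$ and invoking \eqref{1.wb}, and then integrating in time (converting $\|Au\|_{L^2_\theta}$ into $\|\Dx u\|_{L^2_\theta}$ via Proposition \ref{Prop1.wLStokes}), yields \eqref{5.smo1}.

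For \eqref{5.regh1}, for $t\ge1$ I would apply \eqref{5.smo1} on the translated interval $[t-1,t]$, obtaining $\|u(t)\|_{W^{1,2}_b}\le Q(\|u(t-1)\|_{L^2_b})$, and insert the dissipative bound $\|u(t-1)\|_{L^2_b}\le Q(\|u_0\|_{L^2_b})e^{-\alpha(t-1)}+C(1+c^{3/2}+\|g\|_{L^2_b}^2)$ from Theorem \ref{Th5.dis}; monotonicity of $Q$ (together with $Q(p+q)\le Q(2p)+Q(2q)$) then gives $\|u(t)\|_{W^{1,2}_b}\le Q(\|u_0\|_{L^2_b})e^{-\alpha' t}+Q(\|g\|_{L^2_b})$ for $t\ge1$. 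For $t\in[0,1]$, since an energy solution satisfies $u_0\in\Cal H_b$ automatically and now also $u_0\in[W^{1,2}_b(\Omega)]^2$ (so $u_0\in\Cal V_b$, up to the Poiseuille profile $V_c$ handled as in Theorem \ref{Th3.main}), I would run the same weighted estimate with the multiplier $t$ deleted — now legitimate because $\|\Nx u_0\|_{L^2_\theta}$ is finite and enters as the initial datum — and, using $\|Au\|^2_{L^2_\theta}\ge\kappa\|u\|^2_{W^{1,2}_\theta}$ (weighted Poincaré), obtain $\|u(t)\|^2_{W^{1,2}_b}\le Q(\|u_0\|_{W^{1,2}_b})+Q(\|g\|_{L^2_b})$ on $[0,1]$. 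Writing $Q(\|u_0\|_{W^{1,2}_b})\le e^{\alpha'}Q(\|u_0\|_{W^{1,2}_b})e^{-\alpha' t}$ on $[0,1]$ and combining the two regimes gives \eqref{5.regh1}.

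The hard part is controlling the corrector $w_\theta$ for the non-divergence-free test function $t\,\theta^2Au$: this is a manifestation of the pressure pathology recorded in Remark \ref{Rem1.dif}, and its estimate (an $L^2$-in-time variant of Corollary \ref{Cor2.aumain}) involves $Au$, precisely the quantity being bounded, so the whole argument must be closed self-consistently — which is why I would carry it out on the smooth approximations $u_n$, for which $\|Au_n\|_{L^2(0,1;L^2_\theta)}$ is a priori finite with constants uniform in $n$, and only pass to the limit at the end. Equally important is making sure that the cubic term, the corrector term, and the weight-commutator terms all reduce to $\eb$-small multiples of $t\|Au\|^2_{L^2_\theta}$ plus coefficients of $y(t)$ that are integrable in time by Theorem \ref{Th3.main} — the same delicate bookkeeping and $\eb$-loop as in Theorem \ref{Th3.0flux}. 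Finally, verifying that $t\,\theta^2Au-w_\theta$ is an admissible (divergence-free, vanishing on $\partial\Omega$) test function and that the boundary terms arising from $(\Pi[(u,\Nx)u],\theta^2Au)$ vanish — here $u\big|_{\partial\Omega}=0$ is used — is the remaining technical point, again dispatched within the approximation scheme.
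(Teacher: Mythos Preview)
Your approach differs substantially from the paper's, and it contains a genuine gap.

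\textbf{What the paper does.} The paper explicitly \emph{avoids} the corrector machinery of Section~\ref{s2} for this theorem. Instead it tests the already-projected equation $\Dt u=\Pi\Dx u-\Pi[(u,\Nx)u]+\Pi g$ against the localised multiplier $\Nx(\psi_s^2\Nx u)=\psi_s^2\Dx u+2\psi_s\psi_s'\partial_{x_1}u$, where $\psi_s$ is a compactly supported cut-off. Since the pressure is gone after projection, no corrector is needed; the price is the commutator $(\psi_s^2\circ\Pi-\Pi\circ\psi_s^2)\Dx u$, which is handled by a dedicated lemma (estimates \eqref{5.est51}--\eqref{5.est52}). One obtains \eqref{5.h1huge1}, multiplies by $t$, applies Gronwall, and then integrates against $\theta_{\eb,\tau}(s)$ over $s\in\R$ to pass from local to weighted estimates.

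\textbf{The gap in your approach.} Your corrector $w_\theta$ has data $h(t)=2t\,\theta\theta'\,\Psi(Au(t))$, so any estimate on $w_\theta$ --- whether the $C(0,T;\cdot)$ estimate of Theorem~\ref{Th2.au} or the $L^2$-in-time variant you invoke --- is in terms of $\|Au\|$, the very quantity you are bounding. Concretely, your pointwise bound $\|w_\theta(t)\|_{L^3_{\theta^{-2}}}\le C\eb\|Au(t)\|_{L^2_\theta}$ cannot come from Theorem~\ref{Th2.au}: that theorem controls $w_\theta(t)$ by $\sup_{s}\|h(s)\|$, i.e.\ by $\sup_s s\|Au(s)\|_{L^2_\theta}$, which is \emph{not} known a~priori (and is stronger than anything you are proving). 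Even if you restrict to the $L^2$-in-time variant, you still have to extract $t\|\Nx u(t)\|^2_{L^2_\theta}$ pointwise from the energy $y(t)=\tfrac{t}{2}\|\Nx u\|^2_{L^2_\theta}-(u,w_\theta)$, and for that you need $|(u(t),w_\theta(t))|$ small at each $t$; at the terminal time $t=T$ one has $w_\theta(T)=(0,2T\theta\theta'\Psi(Au(T)))$ explicitly, again involving $Au(T)$. This is a genuine circularity that the $\eb$-loop of Theorem~\ref{Th3.0flux} does not resolve: there the multiplier of the absorbable term was $\eb\|u\|_{C(0,T;L^2_\theta)}$, a quantity already controlled by Theorem~\ref{Th3.main}, whereas here it is $\eb$ times a norm of $Au$ which is the target. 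Working on smooth approximations $u_n$ does not help, since you need bounds \emph{uniform} in $n$, which is precisely the content of the circular estimate.

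A secondary issue you have not addressed: the identity you need, $(\Dt u,\theta^2 Au)=\tfrac12\tfrac{d}{dt}\|\Nx u\|^2_{L^2_\theta}+O(\eb)$, is not automatic because $Au=-\Dx u-\Nx p_1$ with $p_1$ the Stokes pressure; the cross-term $(\Dt u,\theta^2\Nx p_1)=2(\theta\theta'(\Dt u)_1,p_1)$ brings in the pressure and $\Dt u$, neither of which is transparently handled by your corrector (which was designed to kill the \emph{Navier--Stokes} pressure $p$, not the Stokes pressure $p_1$).

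In short: the corrector approach, natural for the $L^2$ energy level where its data involve $u$ (controlled), does not transplant to the $H^1$ level where its data involve $Au$ (uncontrolled). The paper's cut-off/commutator route is designed precisely to bypass this.
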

\begin{proof} The proof of this theorem is identical to the one given in \cite{ZelikGlasgow}. Since, in contrast to many results given above, the auxiliary equation \eqref{2.aug} is not used there, this proof is not affected by the mistake related with the auxiliary equation mentioned in the introduction and, therefore, remains correct. Nevertheless, to the convenience of the reader, we give a sketch of this proof below. To this end, we need the following lemma.
\begin{lemma}\label{Lem5.com} Let $\psi_s$, $s\in\R$ be the cut-off functions introduced in the proof of Theorem \ref{Th4.unique} (with some fixed $\mu>0$ which is not important here) and let also $u\in\Cal H_b\cap[W^{2,2}_b(\Omega)]^2$. Then, for sufficiently small $\eb>0$, the following estimate holds:
\begin{equation}\label{5.est51}
\|\psi_s u\|_{W^{2,2}}^2\le C\|\psi_s\Pi\Dx u\|^2_{L^2}+\delta\|u\|^2_{W^{2,2}_{\theta_{\eb,s}}}+C_\delta\|u\|^2_{L^2_{\theta_{\eb,s}}},
\end{equation}
where $\delta>0$ can be arbitrarily small and the constants $C$ and $C_\delta$ are independent of $s$ and $u$. Moreover, for every $u\in [L^2_b(\Omega)]^2$, the following commutation relation holds:
\begin{equation}\label{5.est52}
\|(\psi_s\circ\Pi-\Pi\circ\psi_s)u\|_{W^{1,2}_{\theta_{\eb,s}^{-2}}}\le C\|u\|_{L^2_{\theta_{\eb,s}}},
\end{equation}
where the constant $C$ is also independent of $s$.
\end{lemma}
The proof of this lemma is given \cite{ZelikGlasgow}.
\par
We will derive below estimates \eqref{5.smo1} and \eqref{5.regh1} only for the case $c=0$ (the general case of non-zero flux can be reduced to it exactly as in Theorem \ref{Th3.main}). To this end, following \cite{ZelikGlasgow}, we multiply equation \eqref{4.time} by
$$
\Nx(\psi_s^2\Nx u)=\psi_s^2\Dx u+2\psi_s\psi_s'\partial_{x_1}u
$$
and integrate over $x\in\Omega$. This gives
\begin{equation}\label{5.h1huge}
\frac12\frac d{dt}\|\Nx(\psi_s u)\|^2_{L^2}+(\Pi\Dx u,\Nx(\psi_s^2\Nx u))=(\Pi[(u,\Nx)u],\Nx(\psi_s^2\Nx u))-(\Pi g,\Nx(\psi_s^2\Nx u)).
\end{equation}
We give below the estimates of the most complicated terms which contain $\psi_s^2\Dx u$ in the right-hand side of \eqref{5.h1huge}. The remaining terms which contain $2\psi_s\psi'_s\partial_{x_1}u$ are lower order and can be estimated in a similar but simpler way (we leave the details to the reader, see also \cite{ZelikGlasgow}). First, denoting $Lu:=(\psi^2_s\circ\Pi-\Pi\circ\psi_s^2)\Dx u$ and integrating by parts, we get
$$
(\Pi\Dx u,\psi_s^2\Dx u)=\|\psi_s\Pi\Dx u\|^2_{L^2}-(\Dx u,\Pi Lu)=\|\psi_s\Pi\Dx u\|^2_{L^2}+(\Nx u,\Nx( \Pi Lu))-(\partial_n u,\Pi L u)_{\partial\Omega},
$$
where $(u,v)_{\partial\Omega}=\int_{\partial_\Omega}u.v\, dS$. Using  the regularity of the Leray projector (see \eqref{1.wp}), the weighted trace theorem $H^s_\theta(\Omega)\subset L^2_\theta(\partial\Omega)$ for $s>\frac12$ (see e.g., \cite{ZelikCPAM} or \cite{ZelikGlasgow}), estimate \eqref{5.est52} and the weighted interpolation inequality, we get
\begin{multline*}
|(\Nx u,\Nx( \Pi Lu))|+|(\partial_n u,\Pi L u)_{\partial\Omega}|\le\\\le C\|\Nx u\|_{L^2_{\theta_{\eb,s}}}\|u\|_{W^{2,2}_{\theta_{\eb,s}}}+C\|u\|_{W^{7/4,2}_{\theta_{\eb,s}}}\|u\|_{W^{2,2}_{\theta_{\eb,s}}}\le \delta\|u\|^2_{W^{2,2}_{\theta_{\eb,s}}}+C_\delta\|u\|^2_{L^2_{\theta_{\eb,s}}}.
\end{multline*}
This estimate, together with \eqref{5.est51} gives
\begin{equation}\label{5.laplace}
(\Pi\Dx u,\psi_s^2\Dx u)\ge \kappa\|\psi_s u\|^2_{W^{2,2}}-\delta\|u\|^2_{W^{2,2}_{\theta_{\eb,s}}}-C_\delta\|u\|^2_{L^2_{\theta_{\eb,s}}},
\end{equation}
where $\delta>0$ can be arbitrarily small and the positive constants $\kappa$ and $C_\delta$ are independent of $s$.
\par
Second, we transform the nonlinear term as follows:
\begin{equation}\label{5.nonlinear}
(\Pi[(u,\Nx)u],\psi_s^2\Dx u)=((u,\Nx)(\psi_s u),\psi_s\Pi\Dx u)-((u,\Nx)u,Lu)-(u_1\psi'_su,\psi_s\Pi\Dx).
\end{equation}
Using estimate \eqref{5.est51}, the first term in the right-hand side can be estimated as follows:
$$
|((u,\Nx)(\psi_s u),\psi_s\Dx u)|\le C\|(u,\Nx)(\psi_su)\|_{L^2}^2+\frac\kappa4\|\psi_su\|^2_{W^{2,2}}+\delta\|u\|^2_{W^{2,2}_{\theta_{\eb,s}}}+C_\delta\|u\|_{L^2_{\theta_{\eb,s}}}^2.
$$
and the first term in the right-hand side of this inequality is controlled by the Ladyzhenskaya inequality:
\begin{multline*}
C\|(u,\Nx)(\psi_su)\|_{L^2}^2\le C\|u\|_{L^4(\Omega_{s-1,s+2})}^2\|\Nx(\psi_s u)\|^2_{L^4}\le\\\le C\|u\|^2_{L^2_b}\|\Nx u\|^2_{L^2(\Omega_{s-1,s+2})}\|\Nx(\psi_s u)\|^2_{L^2}+\frac\kappa4\|\psi_s u\|^2_{W^{2,2}}.
\end{multline*}
The last two terms in the right-hand side of \eqref{5.nonlinear} are lower order and easier to estimate:
$$
|(u,\Nx)u,Lu)|+|(u_1\psi'_su,\psi_s\Pi\Dx)|\le C\|u\|^2_{L^4_{\theta_{\eb,s}}}\|u\|_{W^{2,2}_{\theta_{\eb,s}}}\le C\|u\|^{3/2}_{L^2_{\theta_{\eb,s}}}\|u\|^{3/2}_{W^{2,2}_{\theta_{\eb,s}}}\le C_\delta\|u\|^6_{L^2_{\theta_{\eb,s}}}+\delta\|u\|^2_{W^{2,2}_{\theta_{\eb,s}}}
$$
and, finally, we end up with the following estimate of the nonlinear term:
\begin{multline}\label{5.nonlinear1}
|(\Pi[(u,\Nx)u],\psi_s^2\Dx u)|\le C\|u\|^2_{L^2_b}\|\Nx u\|^2_{L^2(\Omega_{s-1,s+2})}\|\Nx(\psi_su)\|^2_{L^2}+\\+\frac\kappa4\|\psi_s u\|^2_{W^{2,2}}+\delta\|u\|^2_{W^{2,2}_{\theta_{\eb,s}}}+C_\delta\|u\|^2_{L^2_{\theta_{\eb,s}}}(1+\|u\|^4_{L^2_{\theta_{\eb,s}}}).
\end{multline}
Inserting the estimates \eqref{5.laplace} and \eqref{5.nonlinear1} (together with the analogous estimates for the lower order terms) to \eqref{5.h1huge}, we arrive at
\begin{multline}\label{5.h1huge1}
\frac d{dt}\|\Nx (\psi_su)\|^2_{L^2}+\kappa\|\psi_su\|^2_{W^{2,2}}\le C\|u\|^2_{L^2_b}\|\Nx u\|^2_{L^2(\Omega_{s-1,s+2})}\|\Nx(\psi_su)\|^2_{L^2}+\\+\delta\|u\|^2_{W^{2,2}_{\theta_{\eb,s}}}+
C_\delta\|u\|^2_{L^2_{\theta_{\eb,s}}}(1+\|u\|^4_{L^2_{\theta_{\eb,s}}}).
\end{multline}
Multiplying this estimate on $t$ and using the interpolation inequality $\|v\|_{H^1}^2\le C\|v\|_{L^2}\|v\|_{H^2}$, we have
\begin{multline}\label{5.h1huge2}
\frac d{dt}(t\|\Nx (\psi_su)\|^2_{L^2})+\kappa t\|\psi_su\|^2_{W^{2,2}}\le C\|u\|^2_{L^2_b}\|\Nx u\|^2_{L^2(\Omega_{s-1,s+2})}(t\|\Nx(\psi_su)\|^2_{L^2})+\\+\delta t\|u\|^2_{W^{2,2}_{\theta_{\eb,s}}}+
C_\delta\|u\|^2_{L^2_{\theta_{\eb,s}}}(1+\|u\|^4_{L^2_{\theta_{\eb,s}}}).
\end{multline}
Applying the Gronwall inequality (with respect to the function $t\to t\|\Nx(\psi_su)\|^2_{L^2}$) to this estimate and using that $\|u(t)\|_{L^2_b}$ and $\|\Nx u\|_{L^2_b((0,1)\times\Omega)}$ are controlled by the energy norm of the initial data (due to estimate \eqref{3.00est}), we have
\begin{equation}
t\|\Nx(\psi_s u(t))\|^2_{L^2}+\int_0^tt\|\psi_su(t)\|^2_{W^{2,2}}\,dt\le \delta\int_0^tt\|u(t)\|^2_{W^{2,2}_{\theta_{\eb,x_0}}}\,dt+Q_\delta(\|u_0\|_{L^2_b}), \ t\in[0,1],
\end{equation}
where $\delta>0$ is arbitrary and the monotone function $Q_\delta$ is independent of $t$ and $u$. Multiplying this inequality by $\theta_{\eb,\tau}(s)$, $\tau\in\R$, integrating over $s\in\R$ and using \eqref{1.wequiv} and \eqref{4.obvious}, we see that, for sufficiently small $\delta>0$,
\begin{equation}\label{5.h1huge3}
t\|\Nx u(t))\|^2_{L^2_{\theta_{\eb,\tau}}}+\int_0^tt\|u(t)\|^2_{W^{2,2}_{\theta_{\eb,\tau}}}\,dt\le Q(\|u_0\|_{L^2_b}), \ t\in[0,1]
\end{equation}
and, taking the supremum over $\tau\in\R$ from both sides of this inequality, we prove the desired estimate \eqref{5.smo1}.
\par
To verify \eqref{5.regh1}, we  observe that if we know that $u_0\in[W^{1,2}_b(\Omega)]^2$, we need not to multiply \eqref{5.h1huge1} by $t$ and may apply the Gronwall inequality directly to \eqref{5.h1huge1}. Then, arguing as before, we end up with
$$
\|\Nx u(t))\|^2_{L^2_{\theta_{\eb,\tau}}}+\int_0^t\|u(t)\|^2_{W^{2,2}_{\theta_{\eb,\tau}}}\,dt\le Q(\|u_0\|_{L^2_b}), \ t\in[0,1]
$$
which proves \eqref{5.regh1} for $t\in[0,1]$. To verify it for $t\ge1$, it is sufficient to combine the smoothing estimate \eqref{5.smo1} of the form
$$
\|u(t)\|_{W^{1,2}_b}\le Q(\|u(t-1)\|_{L^2_b}),\ t\ge1
$$
with the dissipative estimate \eqref{3.00est} for $u(t-1)$. Thus, estimate \eqref{5.regh1} is proved and the theorem is also proved.
\end{proof}
We conclude the section by establishing the analogue of Theorem \ref{Th5.smo} for more regular space $W^{2,2}_b(\Omega)$.

\begin{corollary}\label{Cor5.h2smo} Let the assumptions of Theorem \ref{Th3.main} hold. Then, the energy solution $u(t)$ belongs to the space $[W^{2,2}_b(\Omega)]^2$ and the following estimate holds:
\begin{equation}\label{5.h2smo}
t\|u(t)\|_{W^{2,2}_b}\le Q(\|u_0\|_{L^2_b}),\ \ t\in[0,1],
\end{equation}
where the monotone function $Q$ is independent of $u$ and $t$. Moreover, if in addition $u_0\in [W^{2,2}_b(\Omega)]^2$, then the following dissipative estimate holds:
\begin{equation}\label{5.dish2}
\|u(t)\|_{W^{2,2}_b}\le Q(\|u_0\|_{W^{2,2}_b})e^{-\alpha t}+Q(\|g\|_{L^2_b}),
\end{equation}
where the positive constant $\alpha$ and the monotone function $Q$ are independent of $t\ge0$ and $u$.
\end{corollary}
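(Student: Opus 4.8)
The plan is to run one further parabolic bootstrap, now at the level of $v:=\Dt u$, and to combine it with the ingredients already at hand: the $L^2_b\to W^{1,2}_b$ smoothing (Theorem~\ref{Th5.smo}), the stationary elliptic regularity bound (Lemma~\ref{Lem4.auto}), the finite-time $W^{2,2}_b$-bound (Proposition~\ref{Prop4.badreg}) and the $L^2_b$-dissipativity (Theorem~\ref{Th5.dis}). As everywhere in the paper I work with the weights $\theta_{\eb,\tau}$ and pass to uniformly local norms at the end via \eqref{1.wb}; for simplicity I take $c=0$, the general case being reduced to it exactly as in Theorem~\ref{Th3.main}.

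\textbf{Step 1: a $W^{1,2}_b\to W^{2,2}_b$ smoothing estimate, linear in the data.} Differentiating \eqref{3.eqmain} in time one sees that $v=\Dt u$ solves the homogeneous linearized problem \eqref{4.tdif}, which has exactly the structure of \eqref{4.dif}. Hence, repeating word for word the weighted-energy/cut-off computation from the proof of Theorem~\ref{Th4.unique} (the corrector $v_\psi$ from \eqref{2.au} being controlled through \eqref{4.aucut}, and its contributions being absorbed exactly as there because $v_\psi$ solves the \emph{adjoint} equation), one obtains, on any interval $[t_0,t_0+1]$, a closed estimate for $\|v\|_{C(t_0,t_0+1;L^2_{\theta_{\eb,\tau}})}+\|\Nx v\|_{L^2(t_0,t_0+1;L^2_{\theta_{\eb,\tau}})}$ which is \emph{linear} in $\|v(t_0)\|_{L^2_b}$ (cf.\ \eqref{4.derest}), with constants controlled by $Q(\|u_0\|_{L^2_b})$ via Theorems~\ref{Th3.main} and \ref{Th5.smo}. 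Since $v(t_0)=\Dt u(t_0)$ need not lie in $L^2_b$ when only $u(t_0)\in[W^{1,2}_b(\Omega)]^2$ is known, I would instead multiply the underlying differential inequality by the temporal weight $(t-t_0)$ before applying Gronwall's lemma; this replaces $\|v(t_0)\|^2_{L^2_b}$ by $\int_{t_0}^{t_0+1}\|v(s)\|^2_{L^2_b}\,ds$, which is bounded by $C\|u(t_0)\|^2_{W^{1,2}_b}+Q(\|u_0\|_{L^2_b})$. This last bound follows from the (non-time-weighted) version of estimate \eqref{5.h1huge1} integrated in time, combined with \eqref{4.time} written as $\Dt u=\Pi\Dx u-\Pi[(u,\Nx)u]+\Pi g$ together with $\|(u,\Nx)u\|_{L^2_b}\le C\|u\|_{W^{1,2}_b}\|u\|_{W^{2,2}_b}$ (using the $2$D embedding $W^{2,2}_b\subset C_b$). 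The outcome is $(t-t_0)^{1/2}\|\Dt u(t)\|_{L^2_b}\le Q(\|u_0\|_{L^2_b})(1+\|u(t_0)\|_{W^{1,2}_b})$.

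\textbf{Steps 2 and 3: smoothing and dissipativity.} At each fixed $t$ I regard \eqref{3.eqmain} as the stationary system \eqref{4.statNS} with right-hand side $g-\Dt u(t)$ and apply Lemma~\ref{Lem4.auto}, the dependence on $\|\Dt u(t)\|_{L^2_b}$ being at most polynomial (essentially linear) by the maximal-regularity argument in its proof. Choosing $t_0=t/2$ in Step~1 and inserting $\|u(t/2)\|_{W^{1,2}_b}\le Q(\|u_0\|_{L^2_b})\,t^{-1/2}$ from Theorem~\ref{Th5.smo} yields $\|\Dt u(t)\|_{L^2_b}\le Q(\|u_0\|_{L^2_b})\,t^{-1}$ on $(0,1]$; together with $\|u(t)\|_{L^2_b}\le Q(\|u_0\|_{L^2_b})$ and $t\le1$ this gives $t\,\|u(t)\|_{W^{2,2}_b}\le Q(\|u_0\|_{L^2_b})$, i.e.\ \eqref{5.h2smo}. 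For \eqref{5.dish2}, evaluating \eqref{5.h2smo} at $t=1$ gives the delayed smoothing $\|u(t)\|_{W^{2,2}_b}\le Q(\|u(t-1)\|_{L^2_b})$ for $t\ge1$; combined with the $L^2_b$-dissipative estimate \eqref{5.dis} this produces both a uniform-in-time bound $\|u(t)\|_{W^{2,2}_b}\le Q(\|u_0\|_{W^{2,2}_b})$ (using Proposition~\ref{Prop4.badreg} for $t\le1$) and an absorbing ball $\|u(t)\|_{W^{2,2}_b}\le Q(1+\|g\|_{L^2_b})$ for $t\ge T_0(\|u_0\|_{L^2_b})$; the standard splitting at $t=T_0$ of these two facts yields \eqref{5.dish2}. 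When $u_0\in[W^{2,2}_b(\Omega)]^2$, the fact that $u(t)\in[W^{2,2}_b(\Omega)]^2$ for all $t$ is already guaranteed by Proposition~\ref{Prop4.badreg}.

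\textbf{Main obstacle.} The crux is Step~1: one must carry the entire cut-off/weighted-energy apparatus of Theorem~\ref{Th4.unique} over to $v=\Dt u$ while (i) inserting the temporal weight $(t-t_0)$ correctly — in particular the extra term $\|v\|^2_{L^2_{\theta_{\eb,\tau}}}$ produced by $\frac d{dt}\big((t-t_0)\|v\|^2\big)$ must be integrable over $[t_0,t_0+1]$, which is precisely why one works on $[t_0,t_0+1]$ with $t_0>0$ (ultimately $t_0=t/2$) rather than from $t=0$ directly — and (ii) keeping the dependence on $\|u(t_0)\|_{W^{1,2}_b}$ genuinely linear, so that the substitution $t_0=t/2$ recovers the sharp weight $t^{-1}$ for $\|u(t)\|_{W^{2,2}_b}$ and not a worse negative power. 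Everything else (handling the corrector $v_\psi$, the sup-over-$\tau$ passage to uniformly local spaces, the stationary regularity step, and the transient-versus-absorbing-ball bookkeeping for the dissipative estimate) is routine once the cited results are available.
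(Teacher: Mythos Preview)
Your overall architecture is correct and matches the paper's: first obtain the smoothing $t\|\Dt u(t)\|_{L^2_b}\le Q(\|u_0\|_{L^2_b})$, then recover the $W^{2,2}_b$-bound by viewing \eqref{3.eqmain} as a stationary Stokes problem with right-hand side $g-\Dt u(t)$ and using maximal regularity together with the Ladyzhenskaya/Agmon control of the nonlinearity; the dissipative estimate then follows by combining the unit-time smoothing, Proposition~\ref{Prop4.badreg} for $t\le1$, and the $L^2_b$-dissipativity of Theorem~\ref{Th5.dis}.

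The difference lies in how the bound on $\Dt u$ is produced. The paper does not insert a temporal weight into the differential inequality and then restart at $t_0=t/2$. Instead it takes estimate \eqref{4.derest} as a black box on the interval $(\tau,T)$, multiplies it by $\tau$, and integrates over $\tau\in(0,T)$; Fubini immediately gives
\[
T^{2}\|\Dt u(T)\|^{2}_{L^{2}_{\theta}}+\int_{0}^{T}t^{2}\|\Nx\Dt u\|^{2}_{L^{2}_{\theta}}\,dt\le C\int_{0}^{T}t\,\|\Dt u(t)\|^{2}_{L^{2}_{\theta}}\,dt,
\]
and the right-hand side is then bounded directly from $t=0$ via \eqref{4.time} and the already-available time-weighted $W^{2,2}$-control $\int_{0}^{1}t\|u(t)\|^{2}_{W^{2,2}_{\theta}}\,dt\le Q(\|u_0\|_{L^2_b})$ contained in \eqref{5.h1huge3}. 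This avoids entirely the need to track the \emph{power} of $\|u(t_0)\|_{W^{1,2}_b}$ in your Step~1: the paper never sees that quantity. Your route is also valid---the quadratic dependence of $\int_{t_0}^{t_0+1}\|\Dt u\|^{2}_{L^{2}_{\theta}}$ on $\|u(t_0)\|_{W^{1,2}_b}$ can indeed be achieved (Ladyzhenskaya on each $\Omega_s$, Cauchy--Schwarz in $s$, then the energy bound on $\int\|\Nx u\|^{2}_{L^{2}_{\theta}}$), and that is exactly what your ``linear in $(t-t_0)^{1/2}\|\Dt u(t)\|$'' claim requires---but it is the part of your argument that would need the most detail to write out, and the paper's multiply-by-$\tau$-and-integrate device sidesteps it completely. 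One small caution: your phrase ``replaces $\|v(t_0)\|^{2}_{L^{2}_b}$ by $\int_{t_0}^{t_0+1}\|v(s)\|^{2}_{L^{2}_b}\,ds$'' mixes a weighted differential inequality with a uniformly-local integrand; the extra term produced by the time weight is $\int\|v\|^{2}_{L^{2}_{\theta}}$ (or the analogous $\psi_s$-localised quantity), and only after the usual multiplication by $\theta_{\eb,\tau}(s)$ and integration in $s$ does one pass to uniformly local norms.
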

\begin{proof} Indeed, arguing exactly as at the end  the proof of Theorem \ref{Th5.smo}, we see that the dissipative estimate \eqref{5.dish2} is a formal corollary of the smoothing estimate \eqref{5.h2smo} and estimates \eqref{3.00est} and \eqref{4.badest}. Thus, we only need to prove the smoothing property \eqref{5.h2smo}. To this end, we differentiate equation \eqref{3.eqmain} by $t$ and  denote $v(t)=\Dt u(t)$. Then, estimate \eqref{4.derest} (applied from the initial time moment $t=\tau$ instead of $t=0$ and used on the time interval $t\in(\tau,T)$, $T\le1$) gives
\begin{equation}
\|v(T)\|_{L^2_{\theta_{\eb,s}}}^2+\int_\tau^T\|\Nx v(t)\|^2_{L^2_{\theta_{\eb,s}}}\,dt\le C\|\Dt u(\tau)\|^2_{L^2_{\theta_{\eb,s}}},
\end{equation}
where $C$ depends on the $L^2_b$-norm of the initial data $u_0$, but is independent of $\tau$, $T$ and $s$. Multiplying this estimate by $\tau$ and integrating over $\tau\in[0,T]$, we arrive at
\begin{equation}\label{5.dtest}
T^2\|v(T)\|_{L^2_{\theta_{\eb,s}}}^2+\int_0^Tt^2\|\Nx v(t)\|^2_{L^2_{\theta_{\eb,s}}}\,dt\le 2C\int_0^Tt\|\Dt u(t)\|^2_{L^2_{\theta_{\eb,s}}}\,dt.
\end{equation}
To estimate the integral in the right-hand side, we use \eqref{4.time}. This gives
$$
\int_0^1t\|\Dt u(t)\|^2_{L^2_{\theta_{\eb,s}}}\le C\int_0^1t\|u(t)\|^2_{W^{2,2}_{\theta_{\eb,s}}}+C\int_0^t t\|(u(t),\Nx)u(t)\|^2_{L^2_{\theta_{\eb,s}}}\,dt.
$$
Using the Ladyzhenskaya inequality together with \eqref{1.wequiv}, the smoothing property \eqref{5.smo1} and the the energy estimate \eqref{3.00est}, we get
\begin{multline*}
\int_0^t t\|(u(t),\Nx)u(t)\|^2_{L^2_{\theta_{\eb,s}}}\,dt\le C\int_0^1\int_{s\in\R}\theta_s(y)t\|u(t)\|^2_{L^4(\Omega_y)}\|\Nx u(t)\|^2_{L^4(\Omega_y)}\,dy\,dt\le\\\le C_1\|u\|_{C(0,T;L^2_b)}\|t^{1/2}\Nx u\|_{C(0,T;L^2_b)}\int_0^1t^{1/2}\theta_{\eb,s}(y)\|u(t)\|_{W^{1,2}(\Omega_y)}\|u(t\|_{W^{2,2}(\Omega_y)}\,dy\,dt\le\\\le C_2\int_0^1(\|u(t)\|^2_{L^2_{\theta_{\eb,s}}}+t\|u(t)\|^2_{L^2_{\theta_{\eb,s}}})\,dt\le Q(\|u_0\|_{L^2_b}).
\end{multline*}
Thus, due to \eqref{5.dtest}, we have the smoothing property for the time derivative $\Dt u(t)$, namely,
\begin{equation}\label{5.dtest1}
t\|\Dt u(t)\|_{L^2_b}\le Q(\|u_0\|_{L^2b}),\ \ t\in[0,1]
\end{equation}
and we only need to derive from this estimate the desired estimate for the $W^{2,2}_b$-norm of $u(t)$. To this end, we consider equation \eqref{4.time} as a stationary Navier-Stokes problem (with the right-hand side $g-\Dt u(t)$) for every fixed $t$ and use the maximal regularity of the Stokes operator in the uniformly local spaces as well as the Ladyzhenskaya inequality. This gives
\begin{multline*}
t\|u(t)\|_{W^{2,2}_b}\le C\|g\|_{L^2_b}+t\|\Dt u(t)\|_{L^2_b}+C t\|(u,\Nx) u\|_{L^2_b}\le Q(\|u_0\|_{L^2_b})+Q(\|g\|_{L^2_b})+\\+C t\|u\|^{1/2}_{W^{2,2}_b}\|u\|_{L^2_b}^{1/2}\|\Nx u(t)\|_{L^2_b}\le \frac12t\|u(t)\|_{W^{2,2}_b}+\\+Ct\|\Nx u(t)\|_{L^2_b}^{2}+Q(\|u_0\|_{L^2_b})+Q(\|g\|_{L^2_b})\le \frac12t\|u(t)\|_{W^{2,2}_b}+Q(\|u_0\|_{L^2_b})+Q(\|g\|_{L^2_b}).
\end{multline*}
This estimates give the desired control on $t\|u(t)\|_{W^{2,2}_b}$ and finishes the proof of the corollary.
\end{proof}
\begin{remark} The standard bootstrapping arguments show that the  actual regularity of a weak solution $u(t)$ is restricted by the regularity of the external forces $g$ and the initial data only and, for instance if $g\in C^\infty(\Omega)$, the solution $u(t)\in C^\infty(\Omega)$ for all $t>0$.
\par
Moreover, the obtained in this section results (dissipative and smoothing estimates) allow us to verify the existence of the so-called locally compact global attractor $\Cal A$ for that equation and derive the estimate for its size in $L^2_b(\Omega)$:
$$
\|\Cal A\|_{L^2_b}\le C(1+ c^3+\|g\|_{L^2_b}^2)
$$
which is exactly the same as in \cite{ZelikGlasgow}. Thus, we see that despite the inaccuracy with estimating the solutions of the auxiliary problem, main results of \cite{ZelikGlasgow} remain correct.
\end{remark}

\end{document}